\newcommand{\deffillstyle}{vlines}
\newcommand{\stylecomp}{hlines}
\newcommand{\dotcircle}{\pscircle[linestyle=dotted,linewidth=.75pt](0,0){1}}
\newcommand{\dotarc}[2]{\psarc[linestyle=dotted,linewidth=.75pt]{-}(0,0){1}{#1}{#2}}
\newcommand{\filledarc}[2]{
	\psline(1;#1)(1;#2)
	\psarc[fillstyle=\deffillstyle]{-}(0,0){1}{#1}{#2}
}
\newcommand{\figureconditionMK}{
\psset{unit=0.139\textwidth}
\begin{pspicture}(-1.01,-1.2)(1.01,1.01)
	\SpecialCoor
	\pspolygon(1;-90)(1;30)(1;-210)
	\dotarc{0}{360}
	\psdot[dotsize=6pt](1;-90)
	\rput(1.15;-90){0}
\end{pspicture}
\hfil
\begin{pspicture}(-1.01,-1.2)(1.01,1.01)
	\SpecialCoor
	\pspolygon(1;-90)(1;30)(1;-210)
	\dotarc{0}{360}
	\psdot[dotsize=3pt](1;-90)
	\multido{\n=2+1}{100}{ \psdot[dotsize=3pt](1;-\fpeval{trunc(90+360/\n,3)}) }
	\rput(1.15;-90){0}
	\rput(1.15;90){$-\frac{1}{2}$}
	\rput(1.2;150){$-\frac{1}{3}$}
	\rput(1.2;180){$-\frac{1}{4}$}
	\rput(1.2;198){$-\frac{1}{5}$}
	\rput(1.2;210){$-\frac{1}{6}$}
\end{pspicture}}
\newcommand{\figurefillingofedges}{
\psset{unit=0.09\textwidth}
\begin{pspicture}(-2.5,-1)(2.5,1.5)
	\SpecialCoor
	\psline[linestyle=dotted,linewidth=1.25pt](-1,0)(1,0)
	\psline[linestyle=dotted,linewidth=1.25pt](-1,0)(-2,1)
	\psline(-1,0)(-2,-1)
	\psline(1,0)(2,1)
	\psline[linewidth=2pt](1,0)(2,-1)
	\psdot(-2,1)
	\psdot(-1,0)
	\rput(-2,1.15){$v$}
	\rput(-0.95,0.15){$w$}
\end{pspicture}
\hfil
\begin{pspicture}(-2.5,-1)(2.5,1.5)
	\SpecialCoor
	\psline[linestyle=dotted,linewidth=1.25pt](-1,0)(1,0)
	\psline(-1,0)(-2,1)
	\psline(-1,0)(-2,-1)
	\psline(1,0)(2,1)
	\psline[linewidth=2pt](1,0)(2,-1)
	\psdot(-2,1)
	\psdot(-1,0)
	\rput(-2,1.15){$v$}
	\rput(-0.95,0.15){$w$}
	\rput[l](-1.45,0.6){$\{(v,w)\}\times(0,1)$}
\end{pspicture}}
\newcommand{\figureexamplesubtriangulation}{
\psset{unit=0.145\textwidth}
\begin{pspicture}(-1.01,-1.01)(1.01,1.01)
	\SpecialCoor
	\pspolygon(1;45)(1;-90)(1;0)
	\pspolygon(1;135)(1;-90)(1;45)
	\pspolygon(1;135)(1;-90)(1;225)
	\dotarc{0}{360}
	\psline(1;90)(1;135)
	\filledarc{135}{225}
	\pspolygon[fillstyle=\deffillstyle,linestyle=none](1;45)(1;135)(1;90)
	\psarc[fillstyle=\deffillstyle]{-}(0,0){1}{45}{90}
	\psdot(1;180)
	\rput(1.15;180){$p$}
\end{pspicture}}
\newcommand{\figuredefinitioncomp}{
\psset{unit=0.1\textwidth}
\begin{pspicture}(-1.5,-1.2)(1.5,1.2)
	\SpecialCoor
	\pspolygon(1;45)(1;-90)(1;0)
	\pspolygon(1;135)(1;-90)(1;45)
	\pspolygon(1;135)(1;-90)(1;225)
	\dotarc{0}{360}
	\psline(1;90)(1;135)
	\filledarc{135}{225}
	\pspolygon[fillstyle=\deffillstyle,linestyle=none](1;45)(1;135)(1;90)
	\psarc[fillstyle=\deffillstyle]{-}(0,0){1}{45}{90}
	\psline[linewidth=1pt](1;157.5)(1;202.5)
	\rput(0.1;90){$x$}
	\rput(1.15;202.5){$y$}
\end{pspicture}
\hfil
\begin{pspicture}(-1.5,-1.2)(1.5,1.2)
	\SpecialCoor
	\pspolygon(1;45)(1;-90)(1;0)
	\pspolygon(1;135)(1;-90)(1;45)
	\pspolygon(1;135)(1;-90)(1;225)
	\dotarc{0}{360}
	\psline(1;90)(1;135)
	\filledarc{135}{225}
	\pspolygon[fillstyle=\deffillstyle,linestyle=none](1;45)(1;135)(1;90)
	\psarc[fillstyle=\deffillstyle]{-}(0,0){1}{45}{90}
	\psline[linewidth=1pt](1;135)(1;67.5)
	\rput(1.15;67.5){$x$}
	\rput(0.85;-45){$y$}
\end{pspicture}
\hfil
\begin{pspicture}(-1.5,-1.2)(1.5,1.2)
	\SpecialCoor
	\pspolygon(1;45)(1;-90)(1;0)
	\pspolygon(1;135)(1;-90)(1;45)
	\pspolygon(1;135)(1;-90)(1;225)
	\dotarc{0}{360}
	\psline(1;90)(1;135)
	\filledarc{135}{225}
	\pspolygon[fillstyle=\deffillstyle,linestyle=none](1;45)(1;135)(1;90)
	\psarc[fillstyle=\deffillstyle]{-}(0,0){1}{45}{90}
	\psline[linewidth=1pt](1;135)(1;67.5)
	\rput[l](1.1;67.5){$x=y$}
\end{pspicture}
\hfil
\begin{pspicture}(-1.5,-1.2)(1.5,1.2)
	\SpecialCoor
	\psarc[fillstyle=\stylecomp]{-}(0,0){1}{135}{270}
	\dotarc{270}{135}
\end{pspicture}
\hfil
\begin{pspicture}(-1.5,-1.2)(1.5,1.2)
	\SpecialCoor
	\psarc[fillstyle=\stylecomp]{-}(0,0){1}{135}{67.5}
	\dotarc{67.5}{135}
\end{pspicture}
\hfil
\begin{pspicture}(-1.5,-1.2)(1.5,1.2)
	\SpecialCoor
	\psline[linewidth=1pt](1;135)(1;67.5)
	\dotarc{0}{360}
\end{pspicture}}
\newcommand{\figuretotalorder}{
\psset{unit=0.12\textwidth, arrowsize=4pt}
\begin{pspicture}(-0.75,-0.75)(1.25,1.75)
	\SpecialCoor
	\psline(0,0)(0,-.5)
	\psline(0,0)(-.5,.5)
	\psline[linewidth=1.5pt](0,0)(.5,.5)
	\psline(.5,.5)(0,1)
	\psline[linewidth=1.5pt](.5,.5)(1,1)
	\psline(0,1)(-.5,1.5)
	\psline[linewidth=1.5pt](0,1)(0.5,1.5)
	\psdot[dotsize=4pt](0,-.5)
	\psdot[dotsize=4pt](-.5,.5)
	\psdot[dotsize=4pt](1,1)
	\psdot[dotsize=4pt](-.5,1.5)
	\rput(.1,-.6){$\rho$}
	\rput(-0.1,-0.05){$v_1$}
	\rput(0.6,0.4){$v_2$}
	\rput(-0.1,.95){$v_3$}
	\rput(1.1,1.1){$w_1$}
	\rput(-0.7,1.5){$w_2$}
	\rput(-0.7,.5){$w_3$}
	\psarc{->}(-0.35,-.25){.5}{-22.5}{22.5}
	\psarc{->}(0,.5){.5}{-67.5}{-22.5}
	\psarc{->}(0.5,1){.5}{-67.5}{-22.5}
	\psarc{->}(0.5,.6){.5}{67.5}{112.5}
	\psarc{->}(-0.5,1){.5}{22.5}{67.5}
	\psarc{->}(-0.1,1){.5}{157.5}{202.5}
\end{pspicture}
\hfil
\psset{unit=0.145\textwidth}
\begin{pspicture}(-1.01,-1.01)(1.01,1.01)
	\SpecialCoor
	\pspolygon(1;-90)(1;-135)(1;0)
	\pspolygon(1;135)(1;-135)(1;45)
	\pspolygon(1;135)(1;-135)(1;180)
	\dotarc{0}{360}
	\psline(1;90)(1;135)
	\filledarc{135}{180}
	\pspolygon[fillstyle=\deffillstyle,linestyle=none](1;45)(1;0)(1;-135)
	\psarc[fillstyle=\deffillstyle]{-}(0,0){1}{0}{45}
	\pspolygon[fillstyle=\deffillstyle,linestyle=none](1;45)(1;135)(1;90)
	\psarc[fillstyle=\deffillstyle]{-}(0,0){1}{45}{90}
	\rput(1.1;-45){$\rho$}
	\rput(0.7;-90){$v_1$}
	\rput(0.4;135){$v_2$}
	\rput(0.85;180){$v_3$}
	\rput(1.1;115){$w_1$}
	\rput(1.1;202.5){$w_2$}
	\rput(1.1;-115){$w_3$}
	\psline(0.97;-90)(1.03;-90)
	\rput(1.1;-90){0}
	\psline(0.97;0)(1.03;0)
	\rput(1.3;0){$M_\nu(\rho)$}
\end{pspicture}}
\newcommand{\figuretypesofboundaries}{
\psset{unit=0.18\textwidth}
\begin{pspicture}(-1.01,-1.01)(1.51,1.01)
	\SpecialCoor
	\psline[linewidth=2pt](0;0)(1;150)
	\psline(0.6;146)(.6;154)
	\psdot[dotsize=5pt](1;150)
	\psline[linewidth=2pt](0;0)(1;270)
	\psline(0.3;262)(.3;278)
	\psline(0;0)(.5;30)
	\multido{\n=-18+36}{2}{ \psline(.5;30)(.75;\fpeval{30+\n})
	\multido{\ne=-9+18}{2}{ \psline(.75;\fpeval{30+\n})(.875;\fpeval{30+\n+\ne})
	\multido{\nee=-4.5+9}{2}{ \psline(.875;\fpeval{30+\n+\ne})(.9375;\fpeval{30+\n+\ne+\nee})
	\psline[linestyle=dotted](.9375;\fpeval{30+\n+\ne+\nee})(1.125;\fpeval{30+\n+\ne+\nee})
	}
	}
	}
	\rput(1.15;150){$x_{i1}$}
	\rput(.35;300){$x_{i2}$}
	\rput(.6;135){$x_{i3}$}
	\psdot[dotsize=3pt](1.135;43)
	\rput(1.25;42){$x_{i5}$}
	\rput(1.1;-90){$x_{i4}$}
\end{pspicture}
\hfil
\begin{pspicture}(-1.51,-1.01)(1.01,1.01)
	\SpecialCoor
	\pspolygon(1;30)(1;120)(1;-60)
	\dotarc{165}{210}
	\multido{\n=-60+45,\ne=-15+45}{4}{ \psline{-}(1;\n)(1;\ne) }
	\multido{\n=-60+22.5,\ne=-37.5+22.5}{8}{ \psline{-}(1;\n)(1;\ne) }
	\multido{\n=-60+11.25,\ne=-48.75+11.25}{16}{ \psline{-}(1;\n)(1;\ne) }
	\psarc{-}(0,0){1}{-60}{120}
	\pspolygon(1;210)(1;120)(1;-60)
	\filledarc{210}{300}
	\pspolygon[fillstyle=\deffillstyle,linestyle=none](1;120)(1;165)(1;210)
	\psarc[fillstyle=\deffillstyle]{-}(0,0){1}{120}{165}
	\psline(1;210)(1;165)
	\psline[linewidth=2pt](1;210)(1;147)
	\psdot[dotsize=6pt](1;147)
	\psline[linewidth=2pt](1;223.5)(1;286.5)
	\psdot[dotsize=4pt](1;223.5)
	\psdot[dotsize=4pt](1;286.5)
	\psdot[dotsize=6pt](1;65)
	\psdot[dotsize=6pt](1;255)
	\rput(1.3;192.5){$\overline{x_{i1}}\cap\mathbb{S}$}
	\rput(1.15;147){$\widetilde{x_{i3}}$}
	\rput(1.15;223.5){$\widetilde{x_{i21}}$}
	\rput(1.15;286.5){$\widetilde{x_{i22}}$}
	\rput(1.15;255){$\widetilde{x_{i4}}$}
	\rput(1.15;65){$\widetilde{x_{i5}}$}
\end{pspicture}}
\newcommand{\figurefilledareas}{
\psset{unit=0.139\textwidth}
\begin{pspicture}(-1.01,-1.01)(1.01,1.01)
	\SpecialCoor
	\pspolygon[linewidth=1pt](1;45)(1;90)(1;180)
	\psline[linewidth=1pt]{-}(1;225)(1;270)
	\dotarc{0}{360}
	\multido{\n=182+2,\ne=39+-6}{21}{ \psline[linewidth=1pt]{-}(1;\n)(1;\ne) }
	\psarc[linewidth=1pt]{-}(0,0){1}{-90}{45}
	\psarc[linewidth=1pt]{-}(0,0){1}{180}{225}
	\pspolygon[fillstyle=\deffillstyle,linestyle=none](1;45)(1;180)(1;225)(1;270)
	\psarc[fillstyle=\deffillstyle,linestyle=none]{-}(0,0){1}{180}{225}
	\psarc[fillstyle=\deffillstyle,linestyle=none]{-}(0,0){1}{270}{45}
	\rput(1.15;45){$x$}
	\rput(1.15;270){$y$}
	\rput(1.15;225){$y'$}
	\rput(1.15;180){$x'$}
\end{pspicture}
\hfil
\begin{pspicture}(-1.01,-1.01)(1.01,1.01)
	\SpecialCoor
	\pspolygon[linewidth=1pt](1;45)(1;270)(1;180)
	\dotarc{0}{360}
	\multido{\n=274+4,\ne=41+-4}{17}{ \psline[linewidth=1pt]{-}(1;\n)(1;\ne) }
	\psarc[linewidth=1pt]{-}(0,0){1}{-90}{45}
	\psarc[fillstyle=\deffillstyle,linestyle=none]{-}(0,0){1}{270}{45}
	\rput(1.15;45){$x$}
	\rput[l](1.05;-22.5){$y=y'$}
	\rput(1.15;270){$x'$}
\end{pspicture}}
\newcommand{\figureunionofcomponents}{
\psset{unit=0.1\textwidth}
\begin{pspicture}(-1.05,-1.02)(1.05,1.02)
	\SpecialCoor
	\dotcircle
	\psarc[fillstyle=\stylecomp]{-}(0,0){1}{-90}{45}
	\psarc[fillstyle=\stylecomp]{-}(0,0){1}{100}{-110}
\end{pspicture}
\hfil
\begin{pspicture}(-1.05,-1.02)(1.05,1.02)
	\SpecialCoor
	\dotcircle
	\psarc[fillstyle=\stylecomp]{-}(0,0){1}{45}{-90}
	\psarc[fillstyle=\stylecomp]{-}(0,0){1}{-110}{100}
\end{pspicture}
\hfil
\begin{pspicture}(-1.05,-1.02)(1.05,1.02)
	\SpecialCoor
	\dotcircle
	\psarc[fillstyle=\stylecomp]{-}(0,0){1}{45}{-90}
	\psarc[fillstyle=\stylecomp]{-}(0,0){1}{100}{-110}
\end{pspicture}}
\newcommand{\figuretreefromngon}{
\psset{unit=0.1667\textwidth, linewidth=0.05pt, dotsize=3.6pt}
\providecommand{\pstrrootedge}{\psline[linestyle=dashed, linewidth=0.3pt, arrows=*-o]}
\providecommand{\pstrintedge}{\psline[linestyle=dashed, linewidth=0.3pt, arrows=*-]}
\providecommand{\pstrextedge}{\psline[linestyle=dashed, linewidth=0.3pt, arrows=o-]}
\providecommand{\addTriangCommand}{}
\begin{pspicture}(-1.01,-1.01)(1.01,1.01)
\addTriangCommand
\psarc[linecolor=red,linewidth=1.5pt]{-}(0,0){1}{0}{30}
\psarc[linecolor=green,linewidth=1.5pt]{-}(0,0){1}{30}{60}
\psarc[linecolor=green,linewidth=1.5pt]{-}(0,0){1}{60}{90}
\psarc[linecolor=red,linewidth=1.5pt]{-}(0,0){1}{90}{120}
\psarc[linecolor=red,linewidth=1.5pt]{-}(0,0){1}{120}{150}
\psarc[linecolor=red,linewidth=1.5pt]{-}(0,0){1}{150}{180}
\psarc[linecolor=green,linewidth=1.5pt]{-}(0,0){1}{180}{210}
\psarc[linecolor=red,linewidth=1.5pt]{-}(0,0){1}{210}{240}
\psarc[linecolor=green,linewidth=1.5pt]{-}(0,0){1}{240}{270}
\psarc[linecolor=red,linewidth=1.5pt]{-}(0,0){1}{270}{300}
\psarc[linecolor=green,linewidth=1.5pt]{-}(0,0){1}{300}{330}
\psarc[linecolor=green,linewidth=1.5pt]{-}(0,0){1}{330}{360}
\pstrintedge(-0.122008,-0.455342)(0.044658,0.166667)
\pstrintedge(0.455342,-0.455342)(-0.122008,-0.455342)
\pstrextedge(-0.965926,-0.258819)(-0.910684,0.000000)
\pstrintedge(-0.577350,0.333333)(0.044658,0.166667)
\pstrintedge(0.455342,0.788675)(0.622008,0.500000)
\pstrextedge(0.258819,0.965926)(0.455342,0.788675)
\pstrrootedge(0.622008,0.500000)(0.965926,0.258819)
\pstrextedge(0.707107,-0.707107)(0.288675,-0.744017)
\pstrintedge(0.288675,-0.744017)(0.455342,-0.455342)
\pstrintedge(-0.000000,-0.910684)(0.288675,-0.744017)
\pstrextedge(0.258819,-0.965926)(-0.000000,-0.910684)
\pstrextedge(0.707107,0.707107)(0.455342,0.788675)
\pstrextedge(-0.707107,0.707107)(-0.455342,0.788675)
\pstrextedge(-0.258819,-0.965926)(-0.000000,-0.910684)
\pstrintedge(-0.910684,0.000000)(-0.577350,0.333333)
\pstrintedge(0.044658,0.166667)(0.622008,0.500000)
\pstrextedge(-0.965926,0.258819)(-0.910684,0.000000)
\pstrintedge(-0.455342,0.788675)(-0.577350,0.333333)
\pstrextedge(-0.258819,0.965926)(-0.455342,0.788675)
\pstrextedge(-0.707107,-0.707107)(-0.122008,-0.455342)
\pstrextedge(0.965926,-0.258819)(0.455342,-0.455342)
\SpecialCoor
\pspolygon(1;210.000000)(1;240.000000)(1;360.000000)
\pspolygon(1;240.000000)(1;330.000000)(1;360.000000)
\pspolygon(1;90.000000)(1;150.000000)(1;210.000000)
\pspolygon(1;30.000000)(1;60.000000)(1;90.000000)
\pspolygon(1;30.000000)(1;90.000000)(1;360.000000)
\pspolygon(1;240.000000)(1;300.000000)(1;330.000000)
\pspolygon(1;240.000000)(1;270.000000)(1;300.000000)
\pspolygon(1;150.000000)(1;180.000000)(1;210.000000)
\pspolygon(1;90.000000)(1;210.000000)(1;360.000000)
\pspolygon(1;90.000000)(1;120.000000)(1;150.000000)
\end{pspicture}
\hfil
\providecommand{\pstreerootedge}{\psline[linewidth=0.3pt]}
\providecommand{\pstreeintedge}{\psline[linewidth=0.3pt]}
\providecommand{\pstreeextedge}{\psline[linewidth=0.3pt]}
\providecommand{\addTriangCommand}{}
\begin{pspicture}(-1.01,-1.01)(1.01,1.01)
\pstreeintedge(-0.122008,-0.455342)(0.044658,0.166667)
\pstreeintedge(0.455342,-0.455342)(-0.122008,-0.455342)
\pstreeextedge(-0.965926,-0.258819)(-0.910684,0.000000)
\pstreeintedge(-0.577350,0.333333)(0.044658,0.166667)
\pstreeintedge(0.455342,0.788675)(0.622008,0.500000)
\pstreeextedge(0.258819,0.965926)(0.455342,0.788675)
\pstreerootedge(0.622008,0.500000)(0.965926,0.258819)
\pstreeextedge(0.707107,-0.707107)(0.288675,-0.744017)
\pstreeintedge(0.288675,-0.744017)(0.455342,-0.455342)
\pstreeintedge(-0.000000,-0.910684)(0.288675,-0.744017)
\pstreeextedge(0.258819,-0.965926)(-0.000000,-0.910684)
\pstreeextedge(0.707107,0.707107)(0.455342,0.788675)
\pstreeextedge(-0.707107,0.707107)(-0.455342,0.788675)
\pstreeextedge(-0.258819,-0.965926)(-0.000000,-0.910684)
\pstreeintedge(-0.910684,0.000000)(-0.577350,0.333333)
\pstreeintedge(0.044658,0.166667)(0.622008,0.500000)
\pstreeextedge(-0.965926,0.258819)(-0.910684,0.000000)
\pstreeintedge(-0.455342,0.788675)(-0.577350,0.333333)
\pstreeextedge(-0.258819,0.965926)(-0.455342,0.788675)
\pstreeextedge(-0.707107,-0.707107)(-0.122008,-0.455342)
\pstreeextedge(0.965926,-0.258819)(0.455342,-0.455342)
\psdot[linecolor=red](1;15)
\psdot[linecolor=green](1;45)
\psdot[linecolor=green](1;75)
\psdot[linecolor=red](1;105)
\psdot[linecolor=red](1;135)
\psdot[linecolor=red](1;165)
\psdot[linecolor=green](1;195)
\psdot[linecolor=red](1;225)
\psdot[linecolor=green](1;255)
\psdot[linecolor=red](1;285)
\psdot[linecolor=green](1;315)
\psdot[linecolor=green](1;345)
\SpecialCoor
\end{pspicture}}
\newcommand{\figureIexamplesubtriangulation}{
\psset{unit=0.145\textwidth}
\begin{pspicture}(-1.01,-1.01)(1.01,1.01)
	\SpecialCoor
	\pspolygon(1;45)(1;-90)(1;0)
	\pspolygon(1;135)(1;-90)(1;45)
	\pspolygon(1;135)(1;-90)(1;225)
	\dotarc{0}{360}
	\psline(1;90)(1;135)
	\filledarc{135}{225}
	\pspolygon[fillstyle=\deffillstyle,linestyle=none](1;45)(1;135)(1;90)
	\psarc[fillstyle=\deffillstyle]{-}(0,0){1}{45}{90}
	\psdot(1;180)
	\rput(1.15;180){$p$}
	\rput(.1;90){$c$}
	\rput(1.1;247.5){$x$}
	\rput(1.1;112.5){$y$}
	\rput(.7;5){$z$}
\end{pspicture}
\hfil
\begin{pspicture}(-1.01,-1.01)(1.01,1.01)
	\SpecialCoor
	\psline[linewidth=2pt](1;180)(.65;225)
	\psline(1;247.5)(.65;225)
	\psdot(1;247.5)
	\psline(0.1;90)(.65;225)
	\psline[linewidth=2pt](0.1;90)(1;112.5)
	\psdot(1;112.5)
	\psline(0.1;90)(0.7;5)
	\psline(1;22.5)(0.7;5)
	\psdot(1;22.5)
	\psline(1;-45)(0.7;5)
	\psdot[dotsize=5pt](1;-45)
	\rput(1.15;180){$p$}
	\rput(.05;0){$c$}
	\rput(1.15;247.5){$x$}
	\rput(1.15;112.5){$y$}
	\rput(.8;0){$z$}
\end{pspicture}}
\newcommand{\figureItreefromngon}{
\psset{unit=0.1667\textwidth, linewidth=0.05pt, dotsize=3.6pt}
\providecommand{\pstrrootedge}{\psline[linestyle=dashed, linewidth=0.3pt, arrows=*-o]}
\providecommand{\pstrintedge}{\psline[linestyle=dashed, linewidth=0.3pt, arrows=*-]}
\providecommand{\pstrextedge}{\psline[linestyle=dashed, linewidth=0.3pt, arrows=o-]}
\providecommand{\addTriangCommand}{}
\begin{pspicture}(-1.01,-1.01)(1.01,1.01)
\addTriangCommand
\psarc[linestyle=dotted,linewidth=0.75pt]{-}(0,0){1}{0}{360}
\pstrintedge(-0.122008,-0.455342)(0.044658,0.166667)
\pstrintedge(0.455342,-0.455342)(-0.122008,-0.455342)
\pstrextedge(-0.965926,-0.258819)(-0.910684,0.000000)
\pstrintedge(-0.577350,0.333333)(0.044658,0.166667)
\pstrintedge(0.455342,0.788675)(0.622008,0.500000)
\pstrextedge(0.258819,0.965926)(0.455342,0.788675)
\pstrrootedge(0.622008,0.500000)(0.965926,0.258819)
\pstrextedge(0.707107,-0.707107)(0.288675,-0.744017)
\pstrintedge(0.288675,-0.744017)(0.455342,-0.455342)
\pstrintedge(-0.000000,-0.910684)(0.288675,-0.744017)
\pstrextedge(0.258819,-0.965926)(-0.000000,-0.910684)
\pstrextedge(0.707107,0.707107)(0.455342,0.788675)
\pstrextedge(-0.707107,0.707107)(-0.455342,0.788675)
\pstrextedge(-0.258819,-0.965926)(-0.000000,-0.910684)
\pstrintedge(-0.910684,0.000000)(-0.577350,0.333333)
\pstrintedge(0.044658,0.166667)(0.622008,0.500000)
\pstrextedge(-0.965926,0.258819)(-0.910684,0.000000)
\pstrintedge(-0.455342,0.788675)(-0.577350,0.333333)
\pstrextedge(-0.258819,0.965926)(-0.455342,0.788675)
\pstrextedge(-0.707107,-0.707107)(-0.122008,-0.455342)
\pstrextedge(0.965926,-0.258819)(0.455342,-0.455342)
\SpecialCoor
\pspolygon(1;210.000000)(1;240.000000)(1;360.000000)
\pspolygon(1;240.000000)(1;330.000000)(1;360.000000)
\pspolygon(1;90.000000)(1;150.000000)(1;210.000000)
\pspolygon(1;30.000000)(1;60.000000)(1;90.000000)
\pspolygon(1;30.000000)(1;90.000000)(1;360.000000)
\pspolygon(1;240.000000)(1;300.000000)(1;330.000000)
\pspolygon(1;240.000000)(1;270.000000)(1;300.000000)
\pspolygon(1;150.000000)(1;180.000000)(1;210.000000)
\pspolygon(1;90.000000)(1;210.000000)(1;360.000000)
\pspolygon(1;90.000000)(1;120.000000)(1;150.000000)
\end{pspicture}}
\definecolor{myblue}{rgb}{0,0,.8}
\definecolor{myred}{rgb}{.8,0,0}
\definecolor{GREEN}{rgb}{0,1,0}
\definecolor{green4}{rgb}{.1,.5,.1}
\definecolor{blue}{rgb}{0,0,1}
\definecolor{gray}{rgb}{0.5,0.5,0.5}
  \newtheorem{theorem}{Theorem}
  \newtheorem{definition}{Definition}[section]
  \newtheorem{cond}[definition]{Condition}
  \newtheorem{proposition}[definition]{Proposition}
  \newtheorem{lemma}[definition]{Lemma}
  \newtheorem{corollary}[definition]{Corollary}
  \newcommand{\beCond}[2]{\Rand{\vspace{0,6cm}\tt #1}\begin{cond}[#2]
  \label{#1}} 
  \theoremstyle{definition}
  \newtheorem{remark}[definition]{Remark}
  \newtheorem{example}[definition]{Example}
  \numberwithin{equation}{section}
  \newtheoremstyle{step}{3pt}{0pt}{\itshape}{}{\bf}{}{.5em}{}
\theoremstyle{step} \newtheorem{step}{Step}
\newcommand{\D}{\mathbb{D}}
\newcommand{\M}{\mathbb{M}}
\newcommand{\PP}{\mathbb{P}}
\newcommand{\R}{\mathbb{R}}
\newcommand{\N}{\mathbb{N}}
\newcommand{\BS}{\mathbb{S}}
\newcommand{\T}{\mathbb{T}}
\newcommand{\CB}{\mathcal{B}}
\newcommand{\CC}{\mathcal{C}}
\newcommand{\CE}{\mathcal{E}}
\newcommand{\CF}{\mathcal{F}}
\newcommand{\CI}{\mathcal{I}}
\newcommand{\CM}{\mathcal{M}}
\newcommand{\CP}{\mathcal{P}}
\newcommand{\CS}{\mathcal{S}}
\newcommand{\CT}{\mathcal{T}}
\newcommand{\KK}{\mathcal{K}}
\newcommand{\FC}{\mathfrak{C}}
\newcommand{\Fs}{\mathfrak{s}}
\newcommand{\Fp}{\mathfrak{p}}
\newcommand{\vn}{\underline{n}}
\newcommand{\vN}{\underline{N}}
\newcommand{\mat}[1]{\underline{\underline{#1}}}
\newcommand{\du}[1]{\mathrm{d}\underline{#1}}
\newcommand{\mau}{\underline{\underline{u}}}
\newcommand{\supp}{\mathrm{supp}}
\newcommand{\comp}{\mathrm{comp}}
\newcommand{\seg}{\mathrm{seg}}
\newcommand{\br}{\mathrm{br}}
\newcommand{\lf}{\mathrm{lf}}
\newcommand{\at}{\mathrm{at}}
\newcommand{\edge}{\mathrm{edge}}
\newcommand{\conv}{\mathrm{conv}}
\newcommand{\josue}[1]{{\color{black}#1}}
\DeclareMathAlphabet{\mathpzc}{OT1}{pzc}{m}{it}
\begin{document}

\title[Algebraic two-level measure trees]{Algebraic two-level measure trees}

\author[J. Nussbaumer, V.C. Tran and A. Winter]{Josu\'e Nussbaumer \and Viet Chi Tran \and Anita Winter}

\address{LAMA, Univ Gustave Eiffel, Univ Paris Est Creteil, CNRS, F-77454 Marne-la-Vall\'ee, France. 
}
\address{Fakult\"at f\"ur Mathematik, Universit\"at Duisburg-Essen, Germany. 
}


\thispagestyle{empty}
\date{\today}

\subjclass{Primary: 60B10, 05C05; Second: 60B05, 60D05, 57R05.}

\keywords{Algebraic tree, hierarchical structure, nested tree, branch point map, continuum tree, triangulation of the circle, convergence of trees, two-level sample shape convergence, Gromov-weak convergence.}

\begin{abstract}
With the algebraic trees, L\"ohr and Winter (2021) introduced a generalization of the notion of graph-theoretic trees to account for potentially uncountable structures. The tree structure is given by the map which assigns to each triple of points their branch point. No edge length or distance is considered. One can equip a tree with a natural topology and a probability measure on the Borel-$\sigma$-field, defining in this way an algebraic measure tree. The main result of L\"ohr and Winter is to provide with the sample shape convergence a compact topology on the space of binary algebraic measure trees. This was proved by encoding the latter with triangulations of the circle. In the present paper, we extend this result to a two level setup. Motivated by the study of hierarchical systems with two levels in biology, such as host-parasite populations, we equip algebraic trees with a probability measure on the set of probability measures. To show the compactness of the space of binary algebraic two-level measure trees, we enrich the encoding of these trees by triangulations of the circle, by adding a two-level measure on the circle line. As an application, we define the two-level algebraic Kingman tree, that is the random algebraic two-level measure tree obtained from the nested Kingman coalescent.
\end{abstract}

 \maketitle


\section{Introduction}

Because of their applications to biology and computer science, trees have received significant interest in the mathematical literature of the last decades. In probability theory in particular, many random tree structures have been introduced to model genealogical (or phylogenetic) trees and their evolutions over time. The simplest of these models are defined on state spaces of trees with a finite number of vertices. The size of the space of all trees with a given number of vertices grows exponentially, and it becomes hard to study qualitative statistics of the structures. To overcome this issue, it seems reasonable to consider continuum limits of tree models and study their properties. We are thus interested in a setup which unifies discrete and continuum trees, and where the focus is put on the structure of the trees. Following L\"ohr and Winter \cite{LoehrWinter2021}, we will consider here algebraic trees with a particular attention to two-level (possibly uncountable) trees. The latter, and more generally trees with nested or hierarchical structures, can serve for the modelling of host-parasite or individual-species systems. The introduction motivates the consideration of algebraic two-level measure trees. We will prove in this paper that the subspace of binary algebraic two-level measure trees can be embedded with a compact topology, allowing for limit theorems. We use this to construct the Kingman algebraic two-level measure tree from finite nested Kingman coalescent trees.\\

The common approach by now is to encode trees as metric spaces. For a metric space $(T,r)$ to have a tree-like structure, it is necessary to satisfy the so-called \emph{four-point condition} (see Figure~\ref{f:fourpointsshape}): for all $x_1,x_2,x_3,x_4\in T$,
\begin{equation}\label{e:4pointcondition}
	r(x_1,x_2)+r(x_3,x_4)\leq \max\big\{r(x_1,x_3)+r(x_2,x_4),r(x_1,x_4)+r(x_2,x_3)\big\}.
\end{equation}
However, not all metric spaces satisfying the four-point condition are tree-like, as for example the discrete triangle 3-graph (see Figure~\ref{f:fourpointsshape}, left). For this reason, \emph{metric trees} were introduced in \cite{AthreyaLoehrWinter2017} as are metric spaces $(T,r)$ satisfying the four-point condition \eqref{e:4pointcondition} and admitting branch points, i.e., for all $x_1,x_2,x_3 \in T$, there exists a (necessarily unique) $c_{(T,r)}(x_1,x_2,x_3)\in T$ such that
\begin{equation}
	r\big(x_i,c_{(T,r)}(x_1,x_2,x_3)\big)+r\big(c_{(T,r)}(x_1,x_2,x_3),x_j\big)=r(x_i,x_j)\quad \forall i,j\in\{1,2,3\}, i\neq j.
\end{equation}
Trees defined in the sense of graph theory can be equipped with the graph distance to obtain such metric trees. By letting the sizes of the trees grow to infinity, one might get, after a suitable rescaling of the distance, as a continuum limit a path-connected metric tree, which is referred to as $\R$-tree (see for example \cite{Tits1977,Dress1984,MayerOversteegen1990,MayerNikielOversteegen1992,DressMoultonTerhalle1996, DressTerhalle1996,Terhalle1997,Chiswell2001,Evans2008,AthreyaLoehrWinter2017}).


\begin{figure}[t]
\[
\xymatrix@=1pc{
&\bullet\ar@{-}[ddl]\ar@{-}[ddr]&&&&&		x_1\ar@{-}[dr]&&&&x_3\ar@{-}[dl]\\
&&&&&&										&c_1\ar@{-}[rr]&&c_2&\\
\bullet\ar@{-}[rr]&&\bullet&&&&				x_2\ar@{-}[ur]&&&&x_4\ar@{-}[ul]}\]
\caption{\josue{\textit{On the left:} the discrete triangle equipped with the graph distance satisfies the four-point condition \eqref{e:4pointcondition}, but does not admit branch points. \textit{On the right:} The only possible tree shape spanned by four points separates them into two pairs. Here, $c_1 = c(x_1 , x_2 , x_3 ) = c(x_1 , x_2 , x_4 )$ and $c_2 = c(x_1 , x_3 , x_4 ) = c(x_2 , x_3 , x_4 )$.}}
\label{f:fourpointsshape}
\end{figure}
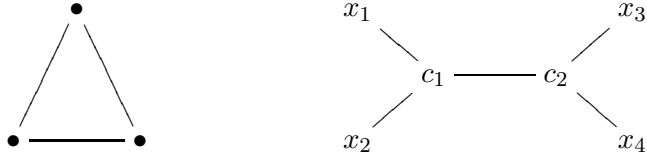

A new notion of potentially continuum trees, namely the algebraic trees, was introduced in \cite{LoehrWinter2021}. The focus is here shifted from the metric to the tree structure given by the so-called \emph{branch point map} which assigns to each triple of points their branch point. The algebraic trees are defined axiomatically by several conditions on the branch point map:

\begin{definition}[Algebraic tree]\label{d:algebraictree}
An \emph{algebraic tree} is a non-empty set $T$ together with a symmetric map $c\colon T^3\rightarrow T$ satisfying the following:
\begin{enumerate}
	\item[(2pc)] For all $x_1,x_2\in T$, $c(x_1,x_2,x_2)=x_2$.
	\item[(3pc)] For all $x_1,x_2,x_3\in T$, $c(x_1,x_2,c(x_1,x_2,x_3))=c(x_1,x_2,x_3)$.
	\item[(4pc)] For all $x_1,x_2,x_3,x_4\in T$,
	\begin{equation}
		c(x_1,x_2,x_3)\in\{c(x_1,x_2,x_4),c(x_1,x_3,x_4),c(x_2,x_3,x_4)\}.
	\end{equation}
\end{enumerate}
We call $c$ the \emph{branch point map}.
\end{definition}

As explained, this approach forgets the notion of metric (hence the denomination of \emph{algebraic trees} in contrast with \emph{metric trees}) and puts the emphasis on the tree structure. 
In order to sample leaves from an algebraic tree, we equip it with a Borel measure. \josue{For this, we consider on an algebraic tree $(T, c)$ the topology generated by so-called \emph{subtree components}, which are sets of the form
\begin{equation}
	\CS_x (y) := \{z \in T\setminus\{x\} : c(x, z, y)\neq x\},
\end{equation}
with $x,y\in T$.} An \emph{algebraic measure tree} $(T, c, \mu)$ consists of a separable algebraic tree $(T, c)$ together with a probability measure $\mu$ on the Borel $\sigma$-algebra $\CB(T,c)$. Associating each algebraic measure tree to the metric measure space \josue{where the intrinsic metric is defined with the help of the distribution of branch points (see Definition~\ref{d:bpd})}, we can use the Gromov-weak topology \josue{(see \cite{Gromov1999,GrevenPfaffelhuberWinter2009,Loehr2013})} to define a metrizable topology on the set $\T$ of (equivalence classes of) algebraic measure trees. Let $\T_2$ be the space of binary algebraic measure trees with no atoms on the skeleton:
\begin{equation}
	\T_2 := \big\{(T, c, \mu) \in \T : \text{degrees at most 3, atoms of }\mu\text{ only at leaves}\big\}.
\end{equation}
Due to the algebraic nature of the concept one might desire a more combinatorial notion of convergence. This can be done by considering the \emph{sample shape convergence} which, similarly to the Gromov-weak convergence, is based on a sampling procedure but evaluates the shape spanned by the sample rather than the matrix of mutual intrinsic distances. It was shown in \cite{LoehrWinter2021} that on $\T_2$ the sample shape convergence is equivalent to the (branch point distribution distance) Gromov-weak convergence and that for this topology, $\T_2$ is a compact subspace. 

This state space $\T_2$ has served to construct and study \josue{in \cite{LoehrMytnikWinter2020} the Aldous diffusion (\cite{Aldous2000}), and more generally in \cite{NussbaumerWinter2020} the $\alpha$-Ford diffusion (see \cite{Ford2005}), which are Markov processes on binary trees without edge lengths}. The compactness of $\T_2$ allows to get around tightness issues in these construction. Also, the sample shape convergence gives rise to a family of convergence determining classes of functions which are very useful when one wants to study tree-valued stochastic processes.\\

More recently, genealogical trees with two-level dynamics have been investigated. They are motivated by the study of two-level systems in biology, such as host-parasite, cell-virus or individual-species systems, where individuals of the first level are grouped together in clusters to form the second level and both levels are subject to resampling mechanisms. The phylogenies of such systems bring an important light on the paths and dynamical phenomenas that are interplaying. For example in anthropology or ecology these models have served as basis for statistical studies and understand collected data (e.g. \cite{BlumJakobsson2010,JayBoitardAusterlitz2019,LepersBilliardPorteMeleardTran2021,Verduetal2009}). For cell models, we can refer for example to \cite{Bansaye2008,BansayeTran2011,Kimmel1997,MeleardRoelly2013} among others.
For extend the notion of algebraic measure trees to hierarchical structures, one can equip metric spaces with two-level measures, that are measures on the set of measures. In \cite{Meizis2019}, the space of metric two-level measure spaces was introduced and equipped with the two-level Gromov-weak topology. It is a sampling topology which is consistent with a two-stage sampling procedure in host-parasite systems, where one would first sample hosts and then parasites within the hosts. The idea of representing a population with hierarchical structure by such a two-level measure is not new (see e.g.\ \cite{DawsonHochbergWu1990,Wu1991,Wu1994,GorostizaHochbergWakolbinger1995,DawsonHochbergVinogradov1995, DawsonHochbergVinogradov1996,GrevenHochberg2000,DawsonGorostizaWakolbinger2004,Dawson2018}). Metric two-level measure spaces allow for instance to study two-level tree-valued processes describing the evolution of ancestral relationships in hierarchical populations, where the metric encodes the genealogical distances between individuals. We applied this theory to define the two-level tree-valued Fleming-Viot dynamics in \cite{Nussbaumer2021}.

Our goal here is to adapt the approach of \cite{Meizis2019} for two-level trees in the metric setup to the algebraic one. An \emph{algebraic two-level measure tree} $(T, c, \nu)$ is thus defined as a separable algebraic tree $(T, c)$ together with a two-level measure $\nu\in\CM_1(\CM_1(T,c))$, i.e.\ a Borel probability measure on the set of Borel probability measures on $(T,c)$. \josue{Once more, we want to draw our attention away from the distances and focus more on the tree-shapes to define a combinatorial version of the two-level Gromov-weak topology.} In particular, we are interested in extending the results in \cite{LoehrWinter2021} to the space $\T_2^{(2)}$ of (equivalence classes of) binary algebraic two-level measure trees (see \eqref{def:T_2^2} for the proper definition).

\josue{When dealing with a two-level measure, say $\nu$, an important ingredient is the intensity measure $M_\nu\in\CM_1(T,c)$ defined by
\begin{equation*}
	M_\nu(\cdot):=\int\nu(\mathrm{d}\mu)\mu(\cdot).
\end{equation*}
Note that if $(T, c, \nu)$ is an algebraic two-level measure tree, $(T,c,M_\nu)$ is an algebraic measure tree. An algebraic two-level measure tree thus naturally defines a probability measure on $\T_2$ and we know that $\CM_1(\T_2)$ is compact since $\T_2$ is compact. However, it seems that the compactness of $\T_2$ cannot be directly used to prove the compactness of $\T_2^{(2)}$. Notice that the information contained in the two-level setup can not be summarized by a mean behaviour, in particular:
\begin{itemize}
	\item the map $(T, c, \nu)\mapsto(T,c,M_\nu)$ is continuous from $\T_2^{(2)}$ to $\T_2$, but it is not injective. To see this, consider for example, $T=\{x,y\}$, $c_{\{x,y\}}$ the only branch point map on $T$ and the two-level measures $\frac{1}{2}(\delta_{\delta_{x}}+\delta_{\delta_{y}})$ and $\delta_{\frac{1}{2}(\delta_{x}+\delta_{y})}$ which both have the same intensity measure $\frac{1}{2}(\delta_{x}+\delta_{y})$.
	\item the map that sends an a2m tree to the corresponding measure on $\T_2$ is not injective either. Indeed, $(\{x,y\},c,\delta_{x})$, $(\{x,y\},c,\delta_{y})$ and $(\{x\},c,\delta_{x})$ are equivalent in $\T_2$, so the measures on $\T_2$ associated to $(\{x,y\},c,\frac{1}{2}(\delta_{\delta_{x}}+\delta_{\delta_{y}}))$ and $(\{x\},c,\delta_{\delta_{x}})$ are equal.
\end{itemize}
}

\begin{figure}[t]
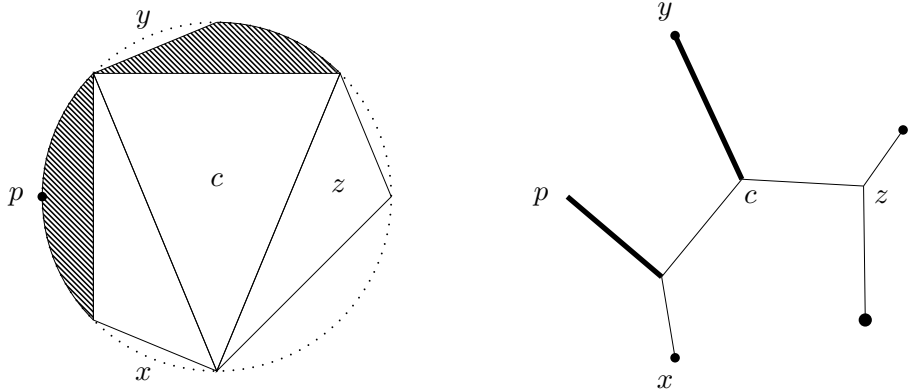

\begin{center}
\figureIexamplesubtriangulation
\end{center}
\caption{\emph{On the left:} A sub-triangulation of the circle with three empty triangles, four empty circular segments and two filled areas. \emph{On the right:} The tree coded by the sub-triangulation. The leaf $p$ does not carry an atom, but the four other leaves carry a weight given by the lengths of the corresponding arcs. The two thick segments carry a non-atomic mass.}\label{f:examplesubtriangulationintro}
\end{figure}

For this reason, we show the compactness of $\T_2^{(2)}$ by adapting a key ingredient used in \cite{LoehrWinter2021} to prove the compactness of $\T_2$, namely the coding of binary algebraic measure trees by \emph{sub-triangulations} of the circle. A similar encoding was first introduced by Aldous \cite{Aldous1994a,Aldous1994b}. In \cite{LoehrWinter2021}, a sub-triangulation of the circle $\BS$ is defined as a closed, non-empty subset $C$ of the disc satisfying the following two conditions:
\begin{enumerate}
	\item The complement of $C$ in the convex hull of $C$ consists of open interiors of triangles.
	\item $C$ is the union of non-crossing (non-intersecting except at endpoints), possibly degenerate closed straight line segments with endpoints in $\BS$.
\end{enumerate}
In this coding, branch points correspond to empty triangles, leaves carrying atoms to empty circular segments, and line segments with non-atomic mass to ``filled areas'' (see Figure~\ref{f:examplesubtriangulationintro}). Moreover, the arc lengths play an important role as they encode the way the mass is distributed in the algebraic measure tree. For example, a triangulation of an $n$-gon encodes the dual graph, equipped with the measure on the leaves given by the Lebesgue measure of the corresponding arcs of the circle line (see Figure~\ref{f:OLtreefromngon}).

We extend this coding in the two-level case. For that, one cannot simply rely on the Lebesgue measure of arcs anymore to encode all the information on the distribution of the random $\mu$-mass in the tree. We rather need to replace the Lebesgue measure by a two-level measure $K\in\CM_1(\CM_1(\BS))$ on the circle and we formally construct the \emph{coding map} that associates an algebraic two-level measure tree in $\T_2^{(2)}$ to a pair $(C,K)$ where $C$ is a sub-triangulation of the circle, and $K$ is a two-level measure on the circle line.

We show that the coding map is continuous and surjective when the set of sub-triangulations is equipped with the Hausdorff metric topology and the set of two-level measures on the circle line with the weak topology. Using that both of these topologies are compact, our main result (Theorem \ref{t:equivalencecompactness}) states that the space $\T_2^{(2)}$ is again compact. As an application, we finish with the construction of the Kingman algebraic two-level measure tree, which is the nested Kingman coalescent measure tree without branch length. This tree is the algebraic tree associated with the metric nested Kingman coalescent tree of \cite{Meizis2019}.\\


\begin{figure}[t]
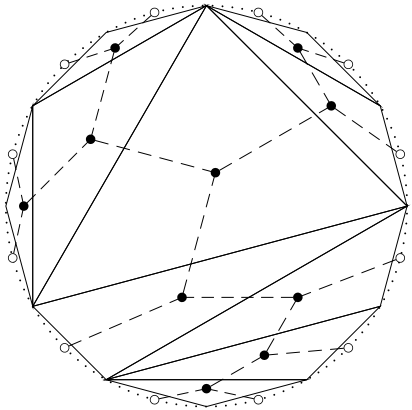

\begin{center}
\figureItreefromngon
\end{center}
\caption{A triangulation of the 12-gon. Here, the coded tree is the dual graph, with uniform distribution on the leaves.}\label{f:OLtreefromngon}
\end{figure}

\noindent\textbf{Outline.} We recall in Section~\ref{S:metricspaces} some definitions and results on metric one and two-level measure spaces. In Section~\ref{S:a2mtrees}, we introduce the space of (equivalence classes of) order separable algebraic measure trees, and equip it with a separable, metrizable topology based on the two-level Gromov-weak convergence of metric reprensatives. Section~\ref{S:triangulations} is devoted to the coding of binary algebraic two-level measure trees by sub-triangulations of the circle together with a two-level measure on the circle line. In Section~\ref{S:topologies}, we introduce with the two-level sample shape another topology on the subspace of binary algebraic two-level measure trees and we use the encoding of Section~\ref{S:triangulations} to show that both topologies are equivalent and compact. Finally, we apply this theory in Section~\ref{S:Kingman} to define the Kingman algebraic two-level measure tree.

\section{Metric one-level and two-level measure spaces}
\label{S:metricspaces}

We recall the basics on the theory of metric measure spaces in Section~\ref{s:mmspaces} and shortly explain how it is extended to a two-level setup in Section~\ref{s:m2mspaces}. This serves as a basis for constructing algebraic two-level measure trees.

\subsection{Metric measure spaces}
\label{s:mmspaces}

We introduce in this section the set of equivalence classes of metric measure spaces (see \cite{Gromov1999,GrevenPfaffelhuberWinter2009}). Equipped with the topology induced by so-called distance polynomials, it is a Polish space. Denoting by $\CM_1(X)$ the set of Borel probability measure on a metric space $X$, we start with the definition of a metric measure space.

\begin{definition}[Metric measure space]
A \emph{metric measure space (mm space)} $(X,r,\mu)$ is a non-empty Polish space $(X,r)$ together with a Borel probability measure $\mu\in\CM_1(X,r)$.
\end{definition}

Two metric measure spaces are equivalent if there exists a measure-preserving isometry between the supports of their respective measures is enough. Thus, by considering the equivalence class of a metric measure space $(X,r,\mu)$, one focuses on the structure of $\mu$ and on the restriction of $r$ to the support of $\mu$ denoted by $\supp(\mu)$, rather than the whole set $X$.

\begin{definition}[Equivalence of metric measure spaces]
\begin{enumerate}
	\item Two mm spaces $(X_i,r_i,\mu_i)$, $i=1,2$, are called \emph{mm-isomorphic} if there exists a measurable function $f\colon X_1\to X_2$ such that $\mu_2=f_{*}\mu_1$ and $f$ is isometric on the set $\supp(\mu_1)$ (but not necessarily on the whole space $X_1$). The function $f$ is called an \emph{mm-isomorphism}.
	\item The relation of being mm-isomorphic is an equivalence relation on the set of
metric measure spaces. \\
The set of equivalence classes of metric measure spaces is denoted by $\M$.
\end{enumerate}
\end{definition}

The following notion of test functions is based on the idea of sampling finite spaces of $(X,r)$ by means of $\mu$, that is, we sample a finite number of points and evaluate the matrix of their mutual distances.

\begin{definition}[Distance polynomials]
A \emph{distance polynomial} is a function $\Phi\colon \M \to \R$ of the form
\begin{equation}
	\Phi(\chi):=\int_{X^m}\mu^{\otimes m}(\du{u})\varphi\left(\big(r(u_{i},u_{j})_{i,j}\big)\right),
\end{equation}
where $\chi=(X,r,\mu)$, $m\in\N$, and $\varphi\in\CC_b(\R^{m^2})$. We write $\Pi_r$ for the set of all distance polynomials.
\end{definition}

The space $\M$ is then equipped with the coarsest topology such that all distance polynomials are continuous.

\begin{definition}[Gromov-weak topology]\label{d:gromovweak}
The \emph{Gromov-weak topology} is the initial topology on $\M$ induced by the test functions in $\Pi_r$. That is, a sequence of mm spaces $(\chi_n)_{n\in N}$ is said to \emph{converge Gromov-weakly} to $\chi$ in $\M$ if and only if $\Phi(\chi_n)$ converges to $\Phi(\chi)$ in $\R$, for all polynomials $\Phi\in\Pi_r$.
\end{definition}

With the following result, the space of metric measure spaces is suitable to do probability theory on it (see \cite[Theorem~1]{GrevenPfaffelhuberWinter2009}).

\begin{proposition}[$\M$ is Polish]
Equipped with the Gromov-weak topology, $\M$ is a Polish space.
\end{proposition}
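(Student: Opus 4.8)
The plan is to realize $(\M,\text{Gromov-weak})$ as a \emph{closed} subspace of a Polish space, and then invoke that closed subspaces of Polish spaces are Polish. The device for this is the \emph{distance matrix distribution}. For $\chi=(X,r,\mu)\in\M$, sample an i.i.d.\ sequence $(U_i)_{i\geq 1}$ with law $\mu^{\otimes\N}$ and push it forward under the map $(x_i)_i\mapsto(r(x_i,x_j))_{i,j\geq 1}$ to obtain a Borel probability measure $\nu_\chi$ on the space $\R^{\N\times\N}$ of infinite real matrices, equipped with the product topology. Because a measure-preserving isometry on $\supp(\mu)$ leaves the law of the sampled matrix unchanged, the assignment $\iota\colon\chi\mapsto\nu_\chi$ is well defined on the equivalence classes in $\M$. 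Since $\R^{\N\times\N}$ is a countable product of copies of $\R$ it is Polish, and hence $\CM_1(\R^{\N\times\N})$ with the weak topology is Polish as well.

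First I would check that $\iota$ is a topological embedding. Each distance polynomial $\Phi\in\Pi_r$ factors through $\iota$: with $\chi=(X,r,\mu)$ one has $\Phi(\chi)=\int\varphi\,\mathrm{d}\nu_\chi$, where $\varphi\in\CC_b(\R^{m^2})$ is read off the upper-left $m\times m$ block of the matrix. Conversely, such bounded continuous cylinder functions are convergence-determining for the weak topology on probability measures over a countable product of Polish spaces. Consequently the Gromov-weak topology, being the initial topology induced by $\Pi_r$, coincides with the initial topology induced by $\iota$ followed by weak convergence. Injectivity of $\iota$ on $\M$ is exactly Gromov's reconstruction theorem: the distance matrix distribution determines the mm-isomorphism class. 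Combining injectivity with the topology identification, $\iota$ is a homeomorphism of $\M$ onto its image $\iota(\M)\subseteq\CM_1(\R^{\N\times\N})$.

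Next I would show that $\iota(\M)$ is closed. One characterizes the image as the set of measures $\nu$ that are (i) invariant under finite permutations of the index set, (ii) concentrated on matrices that are symmetric, vanish on the diagonal, and satisfy the triangle inequality, and (iii) ``reconstructible'', i.e.\ actually arise as a sampling measure of some separable mm space. Conditions (i) and (ii) cut out a closed subset, so the work is to verify that a weak limit of distance matrix distributions again satisfies (iii). Here I would use exchangeability: by de Finetti the limit measure is directed by a random measure on an abstract index set, and a tightness argument (controlling, for each $\varepsilon$, the fraction of sampled points lying in an $\varepsilon$-cover) produces a genuine complete separable metric space carrying a probability measure whose distance matrix distribution is the given limit. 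Granting closedness, $\iota(\M)$ is a closed subset of the Polish space $\CM_1(\R^{\N\times\N})$, hence Polish, and therefore so is $\M\cong\iota(\M)$.

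The main obstacle is the closedness in the third step, specifically that the reconstruction property (iii) passes to weak limits: conditions (i)--(ii) are ``soft'', but producing an actual limiting separable mm space from the limit of matrix distributions requires the genuinely probabilistic input of the exchangeability/de Finetti argument together with the tightness needed to rule out mass escaping to an infinite-dimensional, non-separable limit. The injectivity invoked in the second step also rests on Gromov's reconstruction theorem, which I would cite rather than reprove. Separability of $\M$, if desired independently, follows quickly by noting that distance matrix distributions of finite mm spaces with rational distances are weakly dense in $\iota(\M)$.
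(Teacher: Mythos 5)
Your first two steps are sound: the distance matrix distribution map $\iota$ is well defined on equivalence classes, injective by Gromov's reconstruction theorem, and a homeomorphism onto its image because the Gromov-weak topology is by definition the initial topology of the distance polynomials, which are precisely the integrals of bounded continuous cylinder functions against $\nu_\chi$. The separability remark at the end is also fine. Note that the paper itself offers no proof and simply cites \cite[Theorem~1]{GrevenPfaffelhuberWinter2009}, where Polishness is obtained by a different route: one introduces the Gromov-Prohorov metric (the infimum of Prohorov distances between the push-forward measures over all isometric embeddings of the two spaces into a common metric space), proves directly that it is complete and separable, and then identifies its topology with the Gromov-weak one.

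The third step, however, contains a genuine error: $\iota(\M)$ is \emph{not} closed in $\CM_1(\R^{\N\times\N})$, so no refinement of your argument can show that property (iii) passes to weak limits. A concrete counterexample: let $\chi_n$ be the $n$-point space with uniform measure and all pairwise distances equal to $1$. The probability that $k$ independent uniform samples are pairwise distinct tends to $1$, so $\nu_{\chi_n}$ converges weakly to $\delta_M$, where $M_{ii}=0$ and $M_{ij}=1$ for $i\neq j$. This limit is exchangeable and concentrated on genuine distance matrices, yet it is not the distance matrix distribution of any mm space: such a space would require a non-atomic $\mu$ with $r(x,y)=1$ for $\mu\otimes\mu$-almost every pair, while separability forces some ball of radius $1/3$ to carry positive mass, producing a positive-measure set of pairs at distance less than $2/3$. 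This is exactly the degeneration your proposed tightness argument would have to exclude, and it cannot be excluded: mass really can spread over ever more mutually distant points, and the de Finetti/Aldous--Hoover representation of the limiting exchangeable array is then directed by a non-separable object. (The same sequence is the standard witness that $\M$ fails to be locally compact.) To salvage the strategy one would need $\iota(\M)$ to be a $G_\delta$ subset of $\CM_1(\R^{\N\times\N})$, which is equivalent to, and no easier than, Polishness itself; the workable alternative is the completeness argument for the Gromov-Prohorov metric as in \cite{GrevenPfaffelhuberWinter2009}.
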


\subsection{Metric two-level measure spaces}
\label{s:m2mspaces}

We here introduce the two-level analogues of the metric measure spaces from \cite{Meizis2019}. Equipped with the topology induced by two-level distance polynomials, the set of equivalence classes of metric two-level measure spaces is also Polish.

\begin{definition}[Metric two-level measure space]
A \emph{metric two-level measure space (m2m space)} $(X,r,\nu)$ is a non-empty Polish space $(X,r)$ together with a \emph{two-level measure} $\nu\in\CM_1(\CM_1(X,r))$, i.e.\ a Borel probability measure on the set of Borel probability measures on $(X,r)$.
\end{definition}

In order to define equivalence classes of m2m spaces, we introduce the so-called two-level push-forward operator. Let $(X, r)$ and $(\widehat{X}, \widehat{r})$ be Polish metric spaces and $g$ be a Borel measurable function from $X$ to $\widehat{X}$. As usual, $g_{*}\mu$ denotes the push-forward measure $\mu\circ g^{-1}$ for a Borel probability measure $\mu\in\CM_1(X)$. We regard $g_{*}$ as an operator
\begin{equation}
\begin{array}{ccccc}
	g_* & \colon & \CM_1(X) & \longrightarrow & \CM_1(\widehat{X})\\
	& & \mu & \longmapsto & g_{*}\mu.
\end{array}
\end{equation}
and call $g_{*}$ the \emph{(one-level) push-forward operator of $g$}. This enables us to define the \emph{two-level push-forward operator $g_{**}$ of $g$} by
\begin{equation}
\begin{array}{ccccc}
	g_{**} & \colon & \CM_1(\CM_1(X)) & \longrightarrow & \CM_1(\CM_1(\widehat{X}))\\
	& & \nu & \longmapsto & g_{**}\nu:=\nu\circ(g_*)^{-1}.
\end{array}
\end{equation}

We need yet another notion to define equivalence classes of m2m spaces. In the one-level case, the equivalence of two metric measure spaces is given by a measure-preserving isometry between the supports of their respective measures. For an m2m space, the measure $\mu$ is replaced by a two-level measure $\nu$ and the analog of $\supp(\mu)$ is the support of the \emph{intensity measure} $M_\nu\in\CM_1(X)$, also called \emph{first moment measure}, which is defined by
\begin{equation}\label{e:defMnu}
	M_\nu(\cdot)=\int\nu(\mathrm{d}\mu)\mu(\cdot).
\end{equation}
This notion allows to easily adapt some results on metric measure spaces to m2m spaces by replacing the (one-level) measure $\mu$ by the intensity measure of the two-level measure $\nu$ (compare for example \cite[Proposition~7.1]{GrevenPfaffelhuberWinter2009} to \cite[Theorem~7.2]{Meizis2019} in the case $\nu\in\CM_1(\CM_1(X))$). We will also exploit this idea in the case of algebraic trees below (e.g.\ in Definition~\ref{d:bpd}).

We are now able to define a notion of equivalence for m2m spaces.

\begin{definition}[Equivalence of m2m spaces]
\begin{enumerate}
	\item Two m2m spaces $(X_i,r_i,\nu_i)$, $i=1,2$, are called \emph{m2m-isomorphic} if there exists a measurable function $f\colon X_1\to X_2$ such that $\nu_2=f_{**}\nu_1$ and $f$ is isometric on the set $\supp(M_{\nu_1})$ (but not necessarily on the whole space $X_1$). The function $f$ is called an \emph{m2m-isomorphism}.
	\item The relation of being m2m-isomorphic is an equivalence relation on the set of
m2m spaces.\\
The set of equivalence classes of m2m spaces is denoted by $\M^{(2)}$.
\end{enumerate}
\end{definition}

The following notion of test functions is based on the idea of sampling finite spaces of $(X,r)$ by means of $\nu$, i.e.\ we first sample measures from $\CM_1(X)$ according to $\nu$ and then with each sampled measure, we sample finitely many points in $X$.

\begin{definition}[Two-level distance polynomials]
A \emph{two-level distance polynomial} is a function $\Phi\colon \M^{(2)} \to \R$ of the form
\begin{equation}
	\Phi(\chi):=\int\nu^{\otimes m}(\du{\mu})\int\bigotimes_{i=1}^m\mu_i(\du{u_i})\varphi\left(\big(r(u_{ij},u_{i'j'})_{(i,j),(i',j')}\big)\right),
\end{equation}
where $\chi=(X,r,\nu)$, $m\in\N$, $\vn\in\N^m$ and $\varphi\in\CC_b(\R^{|\vn|^2})$. We write $\Pi_r^{(2)}$ for the set of all two-level distance polynomials.
\end{definition}

The space $\M^{(2)}$ is then equipped with the coarsest topology such that all two-level distance polynomials are continuous.

\begin{definition}[Two-level Gromov-weak topology]\label{d:TLgromovweak}
The \emph{two-level Gromov-weak topology} is the initial topology on $\M^{(2)}$ induced by the test functions in $\Pi_r^{(2)}$. A sequence of m2m spaces $(\chi_n)_{n\in N}$ is said to \emph{converge two-level Gromov-weakly} to $\chi$ in $\M^{(2)}$ if and only if $\Phi(\chi_n)$ converges to $\Phi(\chi)$ in $\R$, for all polynomials $\Phi\in\Pi_r^{(2)}$.
\end{definition}

The following result is given by Proposition~4.6 and Theorem~8.1 in \cite{Meizis2019}:

\begin{proposition}[$\M^{(2)}$ is Polish]
Equipped with the two-level Gromov-weak topology, $\M^{(2)}$ is a Polish space.
\end{proposition}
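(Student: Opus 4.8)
The plan is to mimic the Greven--Pfaffelhuber--Winter treatment of the one-level space $\M$, using the intensity measure $M_\nu$ to transport the one-level machinery wherever possible and treating the genuinely new nested-sampling features directly. The central object is the \emph{two-level distance matrix distribution}: given an m2m space $\chi=(X,r,\nu)$, sample $\mu_1,\mu_2,\dots$ i.i.d.\ from $\nu$, then for each $i$ sample $U_{i,1},U_{i,2},\dots$ i.i.d.\ from $\mu_i$, and record the doubly-indexed array $R(\chi):=\big(r(U_{i,j},U_{i',j'})\big)_{(i,j),(i',j')}$. Its law is a Borel probability measure on the Polish space $\R^{(\N\times\N)^2}$ of such arrays, and by construction the elements of $\Pi_r^{(2)}$ are exactly the integrals of bounded continuous cylinder functions of finitely many entries against this law. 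Hence the two-level Gromov-weak topology is the pullback, under $\chi\mapsto\mathrm{law}(R(\chi))$, of the topology of weak convergence on $\CM_1(\R^{(\N\times\N)^2})$.

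First I would show this map is injective on $\M^{(2)}$, i.e.\ that $\mathrm{law}(R(\chi))$ determines $\chi$ up to m2m-isomorphism. This is the two-level analogue of Gromov's reconstruction theorem, and it is where the intensity measure enters: the sub-array obtained by keeping one point from each block is an i.i.d.\ $M_\nu$-sample, so Gromov--Vershik reconstruction recovers the metric on $\supp(M_\nu)$. The remaining and harder task is to recover the second-level randomness, namely $\nu$ itself as a measure on $\CM_1(\supp(M_\nu))$, from the joint law of the blocks. I expect this to be the main obstacle, because the intensity measure alone does not separate m2m spaces (distinct $\nu$ can share the same $M_\nu$), so one must genuinely exploit the block structure: within block $i$ the points are exchangeable and by de Finetti their empirical measure converges almost surely to $\mu_i$, identifying each $\mu_i$ as a point of $\CM_1(\supp(M_\nu))$, and the law over blocks of these empirical limits is precisely $\nu$.

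Granting injectivity and the identification of the topology, metrizability follows because $\CM_1(\R^{(\N\times\N)^2})$ is Polish and carries a complete separable metric (e.g.\ the Prohorov metric); restricting it yields a metric on $\M^{(2)}$ inducing the two-level Gromov-weak topology, which may be realized concretely as a nested Gromov--Prohorov distance. Separability is then obtained by exhibiting a countable dense family: finite m2m spaces with finitely many atoms, rational interpoint distances and rational two-level weights approximate any $\chi$, by truncating the sampling to finitely many blocks of finitely many points and discretizing the recorded data.

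The last step is completeness. Here I would show that the image of $\M^{(2)}$ under $\chi\mapsto\mathrm{law}(R(\chi))$ is a closed (or at least $G_\delta$) subset of the Polish space $\CM_1(\R^{(\N\times\N)^2})$, so that it inherits complete metrizability. Concretely, a Cauchy sequence $(\chi_n)$ produces a weakly convergent sequence of array laws with some limit $\rho$, and one must verify that $\rho$ equals $\mathrm{law}(R(\chi))$ for some m2m space $\chi$. The constraints characterizing admissible array laws --- the triangle inequalities on the entries, invariance under permuting blocks and under permuting points within each block, and the nested de Finetti structure across the two indices --- are all preserved under weak limits, and any limiting array law satisfying them is represented by an m2m space via the reconstruction of the second step. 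This closes the argument and shows that $\M^{(2)}$ is Polish.
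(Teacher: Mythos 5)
First, note that the paper does not actually prove this proposition: it is quoted directly from Meizis \cite{Meizis2019} (Proposition~4.6 and Theorem~8.1), so your proposal is being measured against that reference rather than against an in-paper argument. Your first three steps form essentially the right skeleton and match what is done there: the two-level Gromov-weak topology is indeed the pullback of weak convergence of the nested distance-array laws; injectivity is the two-level reconstruction theorem (recover $(\supp(M_\nu),r,M_\nu)$ from the one-point-per-block subarray via Gromov--Vershik, then recover each $\mu_i$ as the a.s.\ weak limit of the within-block empirical measures and hence $\nu$ as their law); and separability via finite spaces with rational data is standard.

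The genuine gap is in the completeness step. The image of $\M^{(2)}$ under $\chi\mapsto\mathrm{law}(R(\chi))$ is \emph{not} closed in $\CM_1(\R^{(\N\times\N)^2})$, and the constraints you list (a.s.\ triangle inequalities, exchangeability over blocks and within blocks, the nested de Finetti structure) do \emph{not} characterize the image. The classical one-level counterexample transfers verbatim: let $X_n=\{1,\dots,n\}$ with $r\equiv 1$ off the diagonal and $\nu_n=\delta_{\mathrm{unif}_n}$. The array laws converge weakly to the deterministic array with all off-diagonal entries equal to $1$, which satisfies every one of your constraints, yet is not the array law of any m2m space: a separable metric space in which i.i.d.\ samples are a.s.\ at mutual distance $1$ would have to be an uncountable uniformly discrete space carrying a diffuse probability measure, which is impossible. (This is exactly why, already in \cite{GrevenPfaffelhuberWinter2009}, pre-compactness requires the extra ``modulus of mass distribution'' condition, which is not preserved under weak limits of array laws.) So the pullback of the Prohorov metric on array laws induces the right topology but is an \emph{incomplete} metric, and your claim that ``any limiting array law satisfying the constraints is represented by an m2m space'' fails. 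The image is in fact a $G_\delta$ set --- as it must be if the proposition is true --- but that cannot be obtained from a closedness argument; the standard repair is to introduce a genuinely different compatible metric, the two-level Gromov--Prohorov metric (embed two spaces isometrically into a common Polish space and take the Prohorov distance of the pushed-forward two-level measures), and to prove its completeness directly by an embedding-plus-tightness argument, which is the route taken in \cite{Meizis2019}.
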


\section{The space of algebraic two-level measure trees}
\label{S:a2mtrees}

In this section, we introduce the notion of algebraic two-level measure spaces. We first recall in Section~\ref{s:algebraictrees} tools on algebraic trees from \cite{LoehrWinter2021} on which we will rely in our set-up. In Section~\ref{s:a2mtrees}, we add two-level measures on algebraic trees and equip the space of algebraic two-level measure trees with a separable and metrizable topology.

\subsection{Algebraic trees}\label{s:algebraictrees} After recalling some basic examples and definitions,  we describe the natural topology that exists on an algebraic tree. We then present relations between algebraic trees and $\R$-trees that we will use to exploit results on metric two-level measure spaces. We finish with the notion of struture preserving morphisms to define a notion of equivalence on the space of a2m trees in Section~\ref{S:a2mtrees}.

Recall from Definition~\ref{d:algebraictree} the notion of algebraic tree given by the branch point map that sends a triplet of 3 points in the tree to its branch point. For an algebraic tree $(T,c)$, one can define for every $x,y\in T$, the \emph{interval} $[x,y]$ by
\begin{equation}
	[x,y]:=\{z\in T:c(x,y,z)=z\}.
\end{equation}
We say that $\{x,y\}$ is an \emph{edge} if $x\neq y$ and $[x,y]=\{x,y\}$, that is, there is ``nothing between $x$ and $y$'', and we denote by
\begin{equation}
	\edge(T,c)
\end{equation}
the set of edges of $(T,c)$. If $T$ is finite, then an algebraic tree $(T,c)$ simply is an undirected graph and $\edge(T,c)$ is the set of edges in the corresponding graph.

\begin{example}[Totally ordered spaces as algebraic trees]\label{e:totallyordered}
Let $(T,\leq)$ be a totally ordered space. For all $x,y,z\in T$ such that $x\leq y\leq z$, we define $c_\leq(x,y,z):=y$. It is easy to see that
$c_\leq$ is symmetric and satisfies the conditions (2pc)--(4pc), so that $(T,c_\leq)$ is an algebraic tree. Moreover, the interval $[x,y]$ in $(T,c_\leq)$ coincides with the order interval $\{z\in T:x\leq z\leq y\}$.
\end{example}

Conversely, if $(T,c)$ is an algebraic tree and $\rho\in T$ a distinguished point (often called the \emph{root}), we can define a \emph{partial order} $\leq_\rho$ by letting for $x,y\in T$,
\begin{equation}\label{e:partialorder}
	x\leq_\rho y\quad \Longleftrightarrow \quad x\in[\rho,y].
\end{equation}
This partial order allows us to define a notion of completeness of algebraic trees.

\begin{definition}[Directed order complete trees]
Let $(T,c)$ be an algebraic tree. We call $(T,c)$ \emph{(directed) order complete} if for all $\rho\in T$, the supremum of every totally ordered, non-empty subset exists in the partially ordered set $(T,\leq_\rho)$.
\end{definition}

The following definition gives the analogs of complete $\R$-trees, i.e., $\R$-trees that are complete as metric spaces.

\begin{definition}[Algebraic continuum tree]
We call an algebraic tree $(T, c)$ algebraic continuum tree if the following two conditions hold:
\begin{enumerate}
	\item $(T,c)$ is order complete.
	\item $\edge(T,c)=\emptyset$.
\end{enumerate}
\end{definition}

We can then equip an algebraic tree with a natural topology. To this end, we introduce, for each point $x\in T$, an equivalence relation $\sim_x$ on $T\setminus\{x\}$ such that for all $y,z\in T\setminus\{x\}$, $y\sim_xz$ if and only if $c(x,y,z)\neq x$. For $y\in T\setminus\{x\}$, we denote by
\begin{equation}
\label{e:components}
	\CS_x(y):=\{z\in T\setminus\{x\}:z\sim_x y\}
\end{equation}
the equivalence class of $y$ for this equivalence relation $\sim_x$. We also call $\CS_x(y)$ the \emph{component} of $T\setminus\{x\}$ containing $y$. The \emph{component topology} is defined as the one generated by the set of all components $\CS_x(y)$ with $x\neq y$, $x,y\in T$. In the following, we will suppose that algebraic trees are equipped with this topology.

\begin{figure}[t]
\[
\xymatrix@=1pc{
&{\bullet}\ar@{-}[d]&&{\bullet}&&y   &&&   &{\bullet}&&y\\
{\bullet}\ar@{-}[r]&{\bullet}\ar@{-}[dr]&&&{\bullet}\ar@{-}[ul]\ar@{-}[ur]&   &&&   &&{\bullet}\ar@{-}[ul]\ar@{-}[ur]&\\
&&x\ar@{-}[r]&{\bullet}\ar@{-}[r]&{\bullet}\ar@{-}[r]\ar@{-}[u]&{\bullet}   &&&   &{\bullet}\ar@{-}[r]\ar@{-}[l]&{\bullet}\ar@{-}[r]\ar@{-}[u]&{\bullet}\\
&{\bullet}\ar@{-}[ur]&&&{\bullet}\ar@{-}[u]&   &&&   &&{\bullet}\ar@{-}[u]&}\]
\caption{On the left: $x$ and $y$ are two points of the tree. On the right: the component $\CS_x(y)$.}
\label{f:components}
\end{figure}
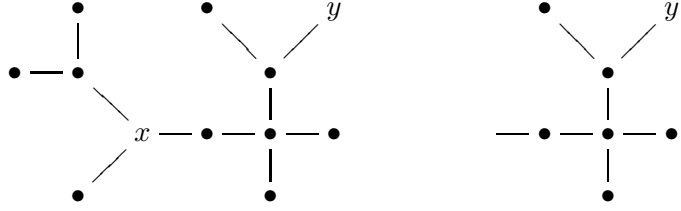

Furthermore, we will often assume the topology to be separable. But for many of our results, a condition on the number of edges will be crucial, so we rely on the following notion of separability (see also \cite[Example~2.23]{LoehrWinter2021}).

\begin{definition}[Order separability]
We call an algebraic tree $(T, c)$ \emph{order separable} if it is separable w.r.t.\ the component topology and has at most countably many edges.
\end{definition}

\begin{remark}[Sufficient condition for metrizability]\label{r:conditionmetrizable}
The component topology of any algebraic tree is Hausdorff \cite[Lemma~2.18]{LoehrWinter2021}. Thus, by Propositions~2.19 and 2.20 in \cite{LoehrWinter2021}, any order complete, order separable algebraic tree is a compact, second countable Hausdorff space. In particular, the component topology is metrizable and thus Polish.
\end{remark}

We also recall a definition of structure-preserving morphisms of algebraic trees which will allow to define equivalence classes of trees in Section~\ref{S:a2mtrees}.

\begin{definition}[Morphisms]\label{d:treemorphism} Let $(T, c)$ and $(\widehat{T}, \widehat{c})$ be algebraic trees. A map $f \colon T \to \widehat{T}$ is called a \emph{tree homomorphism} (from $T$ into $\widehat{T}$) if for all $x, y, z \in T$,
\begin{equation}
	f\big(c(x,y,z)\big)=\widehat{c}\big(f(x),f(y),f(z)\big),
\end{equation}
or equivalently, if for all $x,y\in T$,
\begin{equation}
	f([x,y])\subseteq[f(x),f(y)].
\end{equation}
We refer to a bijective tree homomorphism as \emph{tree isomorphism}.
\end{definition}

There exists a connection between algebraic trees and \emph{metric trees}, which are defined in \cite{AthreyaLoehrWinter2017} as metric spaces $(T,r)$ satisfying the 4-point condition \eqref{e:4pointcondition} and admitting branch points, i.e., for all $x_1,x_2,x_3$, there exists a (necessarily unique) $c_{(T,r)}(x_1,x_2,x_3)\in T$ such that
\begin{equation}
	r\big(x_i,c_{(T,r)}(x_1,x_2,x_3)\big)+r\big(c_{(T,r)}(x_1,x_2,x_3),x_j\big)=r(x_i,x_j)\quad \forall i,j\in\{1,2,3\}, i\neq j.
\end{equation}
Obviously, $c_{(T,r)}$ satisifies the conditions of Definition~\ref{d:algebraictree} and we call $(T,c_{(T,r)})$ the algebraic tree \emph{induced by} $(T,r)$, and $(T,r)$ a \emph{metric representation} of $(T,c_{(T,r)})$.

\begin{remark}[Homeomorphisms are tree homomorphisms]
Let $(T,r)$ and $(\widehat{T}, \widehat{r})$ be two $\R$-trees. Since the branch point map can be expressed in terms of intervals, a homeomorphism $f$ between $(T,r)$ and $(\widehat{T}, \widehat{r})$ is also a tree homomorphism between the corresponding induced algebraic trees (see \cite[Lemma~2.35]{LoehrWinter2021}).
\label{r:homeomorphisms}
\end{remark}

Conversely, under the assumption of order separability, one can build a metric representation of an algebraic tree $(T,c)$ as follows. Equip $(T,c)$ with the Borel $\sigma$-algebra $\CB(T, c)$ of the component topology. For any measure $\xi$ on $(T,\CB(T,c))$ such that $\xi$ is finite on every interval, we consider the following pseudometric
\begin{equation}\label{e:rxi}
	r_\xi(x,y) := \xi([x,y]) -\frac{1}{2} \xi(\{x\}) -\frac{1}{2}\xi(\{y\}),\quad x, y \in T.
\end{equation}

For such a measure $\xi$, denote by $T_\xi$ the set of equivalence classes of points in $T$ that are at distance zero for $r_\xi$. More precisely, $x$ and $y$ are equivalent if $\xi$ puts weight zero on the interval $[x,y]$. Denoting again the quotient metric on $T_\xi$ by $r_\xi$, $(T_\xi,r_\xi)$ is thus the quotient metric space. Let $\pi_\xi:T\to T_\xi$ be the canonical projection. We have the following \cite[Lemma~2.29]{LoehrWinter2021}.

\begin{lemma}[$(T_\xi, r_\xi)$ is a metric tree] Let $(T, c)$ be an algebraic tree, and $\xi$ a measure on $(T, c)$ with $\xi([x, y]) < \infty$ for all $x, y \in T$. Then the quotient space $(T_\xi, r_\xi)$ is a metric tree, and the canonical projection $\pi_\xi$ is a tree homomorphism.
\label{l:quotientspace}
\end{lemma}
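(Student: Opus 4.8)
The plan is to verify the two defining properties of a metric tree for $(T_\xi,r_\xi)$ — the four-point condition and the existence of branch points — together with the homomorphism property of $\pi_\xi$, all from a single additivity identity for $\xi$ along the ``tripod'' spanned by three points. First I would record the elementary interval facts for $(T,c)$ that are used throughout: for any $x,y,z$ the branch point $b:=c(x,y,z)$ lies on each of the three intervals (indeed $c(x,z,b)=c(x,z,c(x,z,y))=c(x,z,y)=b$ by (3pc) and the symmetry of $c$, so $b\in[x,z]$), the three arcs $[x,b],[y,b],[z,b]$ pairwise meet only in $\{b\}$, and $[x,y]=[x,b]\cup[b,y]$ and cyclically. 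These are the basic interval properties of algebraic trees from \cite{LoehrWinter2021}. Since $\xi$ is finite on intervals and the overlaps reduce to the single point $\{b\}$, inclusion--exclusion gives $\xi([x,y])=\xi([x,b])+\xi([b,y])-\xi(\{b\})$, and likewise for the other two pairs.

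With these in hand, a direct computation yields the key identity: for all $x,y,z\in T$ and $b=c(x,y,z)$,
\begin{equation*}
	r_\xi(x,y)+r_\xi(y,z)-r_\xi(x,z)=2\,r_\xi(y,b).
\end{equation*}
Because $r_\xi\ge 0$ (from $\xi([x,y])\ge\xi(\{x\})+\xi(\{y\})$ for $x\neq y$, the two singletons being disjoint subsets of $[x,y]$), the right-hand side is nonnegative, which is precisely the triangle inequality; together with symmetry and $r_\xi(x,x)=0$ this shows $r_\xi$ is a pseudometric on $T$. Writing the identity with each of the three points as the middle one (the branch point is the same $b$ in all three, since $c$ is symmetric), I read off the three ``Gromov products''
\begin{equation*}
	r_\xi(x,b)=\tfrac12\big(r_\xi(x,y)+r_\xi(x,z)-r_\xi(y,z)\big),
\end{equation*}
and cyclically. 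Adding two of them gives $r_\xi(x,b)+r_\xi(b,y)=r_\xi(x,y)$, and similarly for the other pairs. Passing to the quotient $T_\xi$ (where $x\sim y$ iff $r_\xi(x,y)=0$, equivalently $\xi([x,y])=0$, an equivalence relation precisely because $r_\xi$ is a pseudometric), $r_\xi$ becomes a genuine metric, and the last display says that $\pi_\xi(b)$ is a branch point of $\pi_\xi(x),\pi_\xi(y),\pi_\xi(z)$. Since $\pi_\xi$ is surjective, this shows $(T_\xi,r_\xi)$ admits branch points, with $c_{(T_\xi,r_\xi)}(\pi_\xi(x),\pi_\xi(y),\pi_\xi(z))=\pi_\xi(c(x,y,z))$; the latter identity is exactly the statement that $\pi_\xi$ is a tree homomorphism.

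It remains to verify the four-point condition for $r_\xi$, which I would reduce to the algebraic axiom (4pc). Fixing four points and the basepoint $x_4$, the Gromov-product computation turns the four-point condition into the assertion that, among the three numbers $r_\xi\big(x_4,c(x_i,x_j,x_4)\big)$ over pairs $\{i,j\}\subset\{1,2,3\}$, the minimum is attained at least twice. Two applications of (4pc) (permuting which point is distinguished) force two of the three branch points $c(x_i,x_j,x_4)$ to coincide; and if the third is genuinely distinct, the interval structure forces the coinciding one to lie on the interval from $x_4$ to the third, hence to be the nearer one — so the two equal Gromov products are the minimal ones. This yields the four-point condition and completes the proof.

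I expect the main obstacle to be bookkeeping at the level of intervals rather than anything conceptual: establishing the tripod decomposition with single-point overlaps precisely enough to justify the inclusion--exclusion, and, for the four-point condition, pinning down the forced order along $[x_4,x_1]$ of the two relevant branch points (equivalently, that the coinciding pair is the one closer to $x_4$). Everything else follows mechanically from the displayed identity.
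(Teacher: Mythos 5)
The paper offers no proof of this lemma: it is imported verbatim from \cite[Lemma~2.29]{LoehrWinter2021}, so there is no in-paper argument to compare yours against. Judged on its own, your proof is correct and follows the standard route. The tripod decomposition $[x,y]=[x,b]\cup[b,y]$ with $[x,b]\cap[b,y]=\{b\}$ for $b=c(x,y,z)$ is legitimate (intervals and singletons are Borel for the component topology, and $b\in[x,z]$ follows from (3pc) plus symmetry exactly as you write), and your key identity
\[
	r_\xi(x,y)+r_\xi(y,z)-r_\xi(x,z)=2\,r_\xi\big(y,c(x,y,z)\big)
\]
checks out by direct computation. It does all the work at once: nonnegativity of the right-hand side is the triangle inequality, the additivity $r_\xi(x,b)+r_\xi(b,y)=r_\xi(x,y)$ identifies $\pi_\xi(c(x,y,z))$ as the (necessarily unique) metric branch point in the quotient, and that identification is precisely the homomorphism property. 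The Gromov-product reformulation of the four-point condition and its reduction via (4pc) to the statement that two of the three branch points $c(x_i,x_j,x_4)$ coincide are also correct.

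The one step you leave schematic --- that the coinciding pair of Gromov products is the minimal one --- does hold and can be closed cleanly: writing $d:=c(x_1,x_3,x_4)=c(x_2,x_3,x_4)$ and $e:=c(x_1,x_2,x_4)$, one has $d\in[x_4,x_1]\cap[x_4,x_2]=[x_4,e]$ by the standard interval identity $[x,y]\cap[x,z]=[x,c(x,y,z)]$, and then $r_\xi(x_4,d)\le r_\xi(x_4,e)$ follows from your additivity along $[x_4,e]$. With that inserted, the argument is complete.
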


Furthermore, if the measure $\xi$ satisfies $\xi[x,y]>0$ for all $x,y\in T$, then $r_\xi$ is a metric on $T$. Such a measure $\xi$ always exists in the case of order separable algebraic trees (see \cite[Lemma~2.32]{LoehrWinter2021}).

\subsection{Algebraic two-level measure trees}
\label{s:a2mtrees}

In this section, we define algebraic two-level measure trees (a2m trees for short) and equip the space of (equivalence classes of) a2m trees with a topology related to the two-level Gromov-weak topology for m2m spaces. We then give a result that allows to construct a2m trees from a partial knowledge of the two-level mass distribution.

Algebraic measure trees are introduced in \cite{LoehrWinter2021} as order separable algebraic trees equipped with probability measures. We extend this idea by equipping algebraic trees with two-level probability measures. The order separability condition is crucial in the sequel.

\begin{definition}[Algebraic two-level measure tree]
An \emph{algebraic two-level measure tree (a2m tree)} $(T, c, \nu)$ is an order separable algebraic tree $(T, c)$ together with a \emph{two-level measure} $\nu\in\CM_1(\CM_1(T,c))$, i.e.\ a Borel probability measure on the set of Borel probability measures on $(T,c)$.
\end{definition}

Similarly to m2m spaces, we will consider equivalence classes of a2m trees. Recall from \eqref{e:defMnu} the intensity measure $M_\nu$ and define a \emph{subtree} of $T$ as a set $A\subseteq T$ such that $c(A^3)=A$.

\begin{definition}[Equivalence of a2m trees]
\begin{enumerate}
	\item Two a2m trees $(T_i,c_i,\nu_i)$, $i=1,2$, are called \emph{a2m-isomorphic} if there exist subtrees $A_i$ of $T_i$ with $M_\nu(A_i)=1$ and a tree isomorphism $f\colon A_1\to A_2$ such that $\nu_2=f_{**}\nu_1$. The function $f$ is called an \emph{a2m-isomorphism}.
	\item The relation of being a2m-isomorphic is an equivalence relation on the set of
a2m trees. The set of equivalence classes of a2m trees is denoted by $\T^{(2)}$.
\end{enumerate}
\end{definition}

For an a2m tree $(T,c,\nu)$, let $A:=c((\supp(M_\nu))^3)$. By \cite[Corollary~2.3]{Meizis2019}, the support of $\nu$ is a subset of $\{\mu\in\CM_1(T)~:~\supp(\mu)\subseteq\supp(M_\nu)\}$. Thus the subtree $A$ of $(T,c)$ is such that $(T,c,\nu)$ is equivalent to $(A,c',\nu')$, where $c'$ is the restriction of $c$ to $A^3$ and $\nu'$ is the restriction of $\nu$ to $\CM_1(A,c')$. Therefore, we define for an a2m tree $\chi:=(T,c,\nu)$,
\begin{equation}
	\supp(\chi):=c((\supp(M_\nu))^3).
\end{equation}
With this in mind, note that by considering equivalence classes of a2m trees in the latter, we will only focus on the restriction of $c$ and $\nu$ to the support of $M_\nu$.

\josue{The example below is the analogue of \cite[Example~3.4]{LoehrWinter2021}, which states that there is only one equivalence class of algebraic measure trees without branch points nor atoms. For an algebraic tree $(T,c)$, we call $v\in T$ a \emph{branch point} if there exist $x_1,x_2,x_3\in T\setminus\{v\}$ such that $v=c(x_1,x_2,x_3)$. We denote by $\mathrm{br}(T,c)$ the set of branch points and if $\chi:=(T,c,\nu)$,
\begin{equation}
	\br(\chi):=\br(T,c)\cap\supp(\chi).
\end{equation}
We will also denote by $\at(\mu)$ the set of atoms of a measure $\mu$ on $(T,c)$. In particular, $\at(M_{\nu})$ are the atoms of the intensity measure $M_\nu$ defined in \eqref{e:defMnu}.}

\begin{example}[A2m trees without branch points nor atoms]\label{e:nobpnoat}
Let $\chi:=(T,c,\nu)$ be an a2m tree such that $\br(\chi)=\emptyset$ and $\at(M_\nu)=\emptyset$. By \cite[Theorem~1]{LoehrWinter2021}, there is a tree isomorphism from $T$ into $[0, 1]$ and we may thus assume $T \subseteq [0, 1]$. Let $F_{M_\nu}\colon[0,1]\to[0,1]$ be the cumulative distribution function of the intensity measure $M_\nu$. Define $\widetilde{\nu}:=(F_{M_\nu})_{**}\nu$. Then, $M_{\widetilde{\nu}}=F_{M_\nu*}M_\nu$ is the Lebesgue measure on $[0,1]$. Indeed since $\at(M_\nu)=\emptyset$, the function $F_{M_\nu}$ is continuous and for $a\in[0,1]$, the set $\{x:F_{M_\nu}\leq a\}$ is of the form $[0,x_a]$ with $F_{M_\nu}(x_a)=a$, so that $F_{M_{\widetilde{\nu}}}(a)=M_\nu(\{x:F_{M_\nu}\leq a\})=M_\nu([0,x_a])=F_\nu(x_a)=a$.

Let $A := \{x \in\supp(M_\nu)~:~\text{there is no } (y_n)_n\in([0, 1] \setminus \supp(M_\nu))^\N \mbox{such that} y_n <x, y_n \to x\}$ be the support of $M_\nu$ with left boundary points removed. Then $F_{M_\nu}$ restricted to $A$ is bijective and thus an a2m-isomorphism from $(A,c,\nu)$ onto $([0,1],c_{\leq},\widetilde{\nu})$ where $c_{\leq}$ is defined in Example~\ref{e:totallyordered}.

We showed that if we consider the equivalence class of an a2m tree $(T,c,\nu)$ such that $\br(\chi)=\at(M_\nu)=\emptyset$, we can always assume that $T=[0,1]$ and $M_\nu$ is the Lebesgue measure $\lambda_{[0,1]}$. More generally, if there exists an interval $(v,w)\subseteq \supp(M_\nu)$ such that $(v,w)\cap\at(M_\nu)=\emptyset$ and $(v,w)\cap\br(T,c)=\emptyset$, we can assume without loss of generality that $M_\nu$ restricted to $(v,w)$ is the Lebesgue measure $\lambda_{(v,w)}$.
\end{example}

We now equip the space $\T^{(2)}$ with a topology that relies on the two-level Gromov-weak topology on $\M^{(2)}$. To do so, we use that due to the order separability assumption, a2m trees allow for metric representations (see \cite[Theorem~1]{LoehrWinter2021}). But in order to get a useful topology on $\T^{(2)}$, we consider a particular metric representation of an a2m tree $(T, c, \nu)$ by using the metric $r_\xi$ defined in \eqref{e:rxi} with $\xi$ being the so-called \emph{branch point distribution}.

\begin{definition}[Branch point distribution]\label{d:bpd}
We call \emph{branch point distribution} of an a2m tree $(T, c, \nu)$ the push-forward of $M_\nu^{\otimes 3}$ under the branch point map,
\begin{equation}
	\xi := c_*M_\nu^{\otimes 3}.
\end{equation}
\end{definition}

Recall from Lemma~\ref{l:quotientspace} that the quotient space $(T_\xi,r_\xi)$ is a metric space. Therefore the following map associates a particular metric representation to each a2m tree.

\begin{definition}[Selection map $\iota$]
Define the \emph{selection map} $\iota\colon\T^{(2)}\to\M^{(2)}$ by
\begin{equation}
	\iota(T,c,\nu):=(T_\xi,r_\xi,\nu_\xi),
\end{equation}
where $\xi = c_*M_\nu^{\otimes 3}$ is the branch point distribution of $(T,c,\nu)$, $(T_\xi,r_\xi)$ is the quotient metric space, and $\nu_\xi:={\pi_\xi}_{**}\nu$ is the two-level push-forward measure of $\nu$ under the canonical projection $\pi_\xi$.
\end{definition}

It is easy to see that if two a2m trees are equivalent, then their images under $\iota$ are also equivalent, with the same isomorphism (see Remark~\ref{r:homeomorphisms}). Thus the selection map is well-defined. The following result states that $\iota$ indeed selects metric representations, and is injective.

\begin{proposition}[$\iota$ is an embedding]
For all $\chi\in\T^{(2)}$, $\iota(\chi)$ is a metric representation of $\chi$. Moreover, the selection map $\iota\colon\T^{(2)}\to\M^{(2)}$ is injective.
\end{proposition}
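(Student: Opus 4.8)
The plan is to prove the two assertions in turn, reducing the metric-space part entirely to the one-level theory of \cite{LoehrWinter2021} (since the branch point distribution depends on $\nu$ only through $M_\nu$), and then transporting the two-level measure by the push-forward operators $(\pi_\xi)_*$ and $(\pi_\xi)_{**}$.

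For the first claim, I would begin by noting that $\xi = c_*M_\nu^{\otimes 3}$ is a probability measure, so $\xi([x,y])\le 1<\infty$ for all $x,y\in T$; hence Lemma~\ref{l:quotientspace} applies and yields that $(T_\xi,r_\xi)$ is a metric tree with canonical projection $\pi_\xi$ a tree homomorphism. Being a tree homomorphism, $\pi_\xi$ intertwines $c$ with the branch point map $c_{(T_\xi,r_\xi)}$ induced by the metric, so the algebraic structure is transported by $\pi_\xi$. The essential point is that $\pi_\xi$ is \emph{faithful} on $\supp(M_\nu)$. Since $\xi$ is exactly the branch point distribution of the one-level algebraic measure tree $(T,c,M_\nu)$, the metric space $(T_\xi,r_\xi)$ coincides with its branch-point-distribution representation, and I would invoke the one-level embedding result of \cite{LoehrWinter2021}: there is a subtree $A\subseteq T$ with $M_\nu(A)=1$ on which $\pi_\xi$ restricts to a tree isomorphism onto a full-$M_{\nu_\xi}$-measure subtree of $T_\xi$, and $(T_\xi,r_\xi,(\pi_\xi)_*M_\nu)$ is a metric representation of $(T,c,M_\nu)$.

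It then remains to add the two-level measure. A direct computation gives $M_{\nu_\xi}=(\pi_\xi)_*M_\nu$, so the intensity measures are compatible with the one-level picture. By \cite[Corollary~2.3]{Meizis2019} the support of $\nu$ consists of measures supported in $\supp(M_\nu)$, and since $M_\nu(T\setminus A)=0$ forces $\mu(T\setminus A)=0$ for $\nu$-a.e.\ $\mu$, restricting to $A$ loses no information. Consequently $\pi_\xi|_A$ is an a2m-isomorphism from $\chi=(T,c,\nu)$ onto the a2m tree $(\pi_\xi(A),c_{(T_\xi,r_\xi)},\nu_\xi)$ induced by $\iota(\chi)$, because $\nu_\xi=(\pi_\xi)_{**}\nu$ by definition. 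This is precisely the assertion that $\iota(\chi)$ is a metric representation of $\chi$.

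For injectivity, suppose $\iota(\chi_1)$ and $\iota(\chi_2)$ agree in $\M^{(2)}$, i.e.\ are m2m-isomorphic through some $f$. On $\supp(M_{\nu_{\xi_1}})$ the map $f$ is a measure-preserving isometry, hence by Remark~\ref{r:homeomorphisms} simultaneously a tree isomorphism between the induced algebraic trees, and it satisfies $f_{**}\nu_{\xi_1}=\nu_{\xi_2}$. Thus the a2m trees induced by $\iota(\chi_1)$ and $\iota(\chi_2)$ are a2m-isomorphic; combining this with the first part, which identifies each $\chi_i$ with the a2m tree induced by $\iota(\chi_i)$, gives $\chi_1\cong\chi_2$ in $\T^{(2)}$. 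I expect the main obstacle to be the faithfulness of $\pi_\xi$ on $\supp(M_\nu)$ in the first part, namely that the branch point distribution $\xi$ charges enough of the tree that $r_\xi$ does not collapse the relevant points; this is exactly where the one-level machinery of \cite{LoehrWinter2021} does the work, after which everything involving $\nu$ follows formally from the push-forward identities and the support inclusion of \cite{Meizis2019}.
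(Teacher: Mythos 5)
Your proposal is correct and follows essentially the same route as the paper: both arguments reduce everything to showing that $\pi_\xi$ restricts to a tree isomorphism on a subtree $A$ with $M_\nu(A)=1$ (the two-level measure then being carried along by $(\pi_\xi)_{**}$, and injectivity following from Remark~\ref{r:homeomorphisms} exactly as you describe). The only difference is that you outsource the existence of $A$ to the one-level embedding result of \cite{LoehrWinter2021} --- legitimate, since $\xi=c_*M_\nu^{\otimes 3}$ depends on $\nu$ only through $M_\nu$ --- whereas the paper reproves it explicitly by setting $N:=\{v\in T:\pi_\xi(v)\neq\{v\}\}$, using order separability to show $\pi_\xi(N)$ is countable and hence $M_\nu(N)=0$, and using the normalization $\xi\{v\}>0$ for branch points to check that $T\setminus N$ is a subtree.
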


\begin{proof}
It is enough to show that for all $\chi\in\T^{(2)}$, the algebraic tree induced by $\iota(\chi)$ is $\chi$, because by Remark~\ref{r:homeomorphisms}, if $\iota(\chi)$ and $\iota(\chi')$ are equivalent in $\M^{(2)}$, the measure preserving bijective homeomorphism $f\colon\iota(\chi)\to\iota(\chi')$ is a tree homomorphism and thus yields an a2m-isomorphism on the corresponding a2m trees $\chi$ and $\chi'$.

Fix $\chi=(T,c,\nu)\in\T^{(2)}$. We can assume w.l.o.g that for all $v\in\mathrm{br}(T)$, $\xi\{v\}>0$. By Lemma~\ref{l:quotientspace}, the canonical projection $\pi_\xi\colon T\to T_\xi$ is a (surjective) tree homomorphism. Therefore, to show equivalence of $(T,c,\nu)$ and $(T_\xi,c_{(T_\xi,r_\xi)},\nu_\xi)$, it is sufficient to show that $\pi_\xi$ is injective on a subtree $A\subseteq T$ such that $M_\nu(A)=1$. We take $A:=T\setminus N$ with $N:=\{v\in T:\pi_\xi(v)\neq\{v\}\}$, and $\pi_\xi$  is injective on $A$.

Let us first show that $M_\nu(A)=1$. If $\pi_\xi(v)\neq\{v\}$ for some $v\in T$, then we can find some $u\neq v$ in $T$ such that $r_\xi(u,v)=0$, i.e., $\xi([u,v])-\frac{1}{2}\xi\{u\}-\frac{1}{2}\xi\{v\}=0$. Thus, $\xi\{v\}=0$ and $M_\nu\{v\}=0$. Moreover, since $\pi_\xi$ is a tree isomorphism, $w\in\pi_\xi(v)$ implies $[v, w] \subseteq \pi_\xi(v)$. But due to order separability, there are at most countably many non-degenerate, disjoint closed intervals in $T$, which implies that $\pi_\xi(N)$ is countable, and thus $M_\nu(N) = 0$.

Finally, to see that $A$ is a subtree, let $x,y,z\in A$. If $v:=c(x, y, z) \in \{x, y, z\}$, then $v\in A$. Otherwise, $v \in \mathrm{br}(T, c)$, and hence $M_\nu\{v\} > 0$ by assumption, which implies that $\pi_\xi(v) = \{v\}$, i.e.\ $v\in A$.
\end{proof}

Since the selection map is an embedding, it is suitable to define a useful topology on $\T^{(2)}$.

\begin{definition}[Two-level bpdd-Gromov-weak topology] \label{d:TLbpdd}
Let $\M^{(2)}$ be equipped with the two-level Gromov-weak topology. We call the topology induced on $\T^{(2)}$ by the selection map $\iota$ \emph{two-level branch-point distribution distance Gromov-weak topology (two-level bpdd-Gromov-weak topology)}.
\end{definition}

An immediate consequence of Proposition 2.6 is the following:

\begin{corollary}[Separability and metrizability]\label{p:separablemetrizable}
$\T^{(2)}$ equipped with the two-level bpdd-Gromov-weak topology is a separable, metrizable space.
\end{corollary}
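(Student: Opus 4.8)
The plan is to derive the corollary directly from the already-established fact that the selection map $\iota\colon\T^{(2)}\to\M^{(2)}$ is an embedding (Proposition~2.6) together with the known topological properties of $\M^{(2)}$. Since by the preceding proposition $\M^{(2)}$ equipped with the two-level Gromov-weak topology is Polish, it is in particular separable and metrizable. The two-level bpdd-Gromov-weak topology on $\T^{(2)}$ is, by Definition~2.7, nothing but the initial topology induced by $\iota$; equivalently, $\iota$ is a homeomorphism onto its image $\iota(\T^{(2)})\subseteq\M^{(2)}$.

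\noindent First I would invoke that metrizability and separability are both hereditary under passing to subspaces: any subspace of a separable metrizable space is again separable and metrizable. Thus the image $\iota(\T^{(2)})$, as a subspace of the Polish space $\M^{(2)}$, is separable and metrizable. Explicitly, if $d^{(2)}$ denotes a metric inducing the two-level Gromov-weak topology on $\M^{(2)}$, then the pullback $d_{\T}(\chi,\chi'):=d^{(2)}\big(\iota(\chi),\iota(\chi')\big)$ defines a metric on $\T^{(2)}$ inducing precisely the initial topology of $\iota$, because $\iota$ is injective (so $d_{\T}$ separates points) and an embedding (so the topology it generates matches the induced topology). This immediately yields metrizability.

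\noindent For separability I would transport a countable dense subset: $\iota(\T^{(2)})$ is separable as a subspace of the separable metric space $\M^{(2)}$, so it admits a countable dense subset $\{\iota(\chi_n):n\in\N\}$ with $\chi_n\in\T^{(2)}$. Since $\iota$ is a homeomorphism onto its image, the preimage $\{\chi_n:n\in\N\}$ is then dense in $\T^{(2)}$, establishing separability. Both claims therefore reduce to the two standing inputs, namely that $\iota$ is an embedding and that $\M^{(2)}$ is separable and metrizable.

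\noindent This proof is essentially formal and I do not anticipate a genuine obstacle; the only point requiring a modicum of care is making sure the embedding statement of Proposition~2.6 is used in its full strength. That proposition asserts injectivity of $\iota$ and that $\iota(\chi)$ is a metric representation, and Definition~2.7 defines the topology on $\T^{(2)}$ as the one \emph{induced} by $\iota$, i.e.\ the initial topology. One must confirm that this initial-topology definition genuinely makes $\iota$ a topological embedding (a homeomorphism onto its image), which is automatic once $\iota$ is injective, so that the pullback metric argument applies verbatim. Given these, the corollary follows in a few lines.
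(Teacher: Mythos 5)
Your argument is correct and coincides with the paper's own proof: both pull back a metric $d_{2\mathrm{GP}}$ inducing the two-level Gromov-weak topology on $\M^{(2)}$ along the injective embedding $\iota$ to obtain a metric inducing the two-level bpdd-Gromov-weak topology, and deduce separability from the separability of the metrizable space $\M^{(2)}$. You are merely slightly more explicit than the paper about transporting the countable dense set, which is harmless.
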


\begin{proof}
The two-level Gromov-weak topology on $\M^{(2)}$ is separable and metrizable by the two-level Gromov-Prokhorov metric $d_{2\mathrm{GP}}$ (see \cite[Proposition~4.6, Theorem~8.1]{Meizis2019}). Now define for $\chi_1,\chi_2\in\T^{(2)}$,
\begin{equation}
	d_{2\mathrm{BGP}}(\chi_1,\chi_2) := d_{2\mathrm{GP}}(\iota(\chi_1),\iota(\chi_2)).
\end{equation}
Since $\iota$ is injective, $d_{2\mathrm{BGP}}$ is a metric on $\T^{(2)}$ and it induces the two-level bpdd-Gromov-weak topology.
\end{proof}

\section{Triangulations of the circle}
\label{S:triangulations}

A key point in proving compactness of $\mathbb{T}_2$ in \cite{LoehrWinter2021} was to encode binary algebraic measure trees as sub-triangulations of the circle, and then exploit the compactness of the space of sub-triangulations equipped with the Hausdorff topology. We are interested in formulating a similar result in the case of binary a2m trees. In order to encode the random masses of the subtree components, we add a two-level measure on the circle to the sub-triangulation.

We first define what we mean by sub-triangulations of the circle and in Subsection~\ref{s:coding}, we construct the map that associates an a2m tree to every sub-triangulation together with a two-level measure on the circle line and show that, as in the one-level case, this coding map is continuous and surjective.

\subsection{The space of sub-triangulations of the circle}

In the whole Section~\ref{S:triangulations}, we fix $\D$ a closed disc of circumference 1, and denote by $\BS := \partial\D$ the circle. We will repeatedly identify $\BS$ with $[0,1]$ where the endpoints are glued. In this section, we denote by $\lambda_I$ the Lebesgue measure on an interval $I\subseteq \BS\simeq[0,1]$. For a subset $A \subseteq \D$, we denote by $\overline{A}$, $A^\circ$, $\partial A$ and $\conv(A)$ the closure, the interior, the boundary and the convex hull of $A$, respectively. Furthermore, we define
\begin{equation}\label{def:t1}
	\Delta(A):=\big\{\text{connected components of }\conv(A)\setminus A\big\},
\end{equation}
and
\begin{equation}\label{def:t2}
	\nabla(A):=\big\{\text{connected components of }\D\setminus \conv(A)\big\},
\end{equation}
so that we have the partition of the disc
\begin{equation}
	\D=A\uplus\biguplus_{a\in\Delta(A)}a\uplus\biguplus_{b\in\nabla(A)}b,
\end{equation}where $\biguplus$ denotes the disjoint union.

\begin{definition}[Sub-triangulations of the circle]
A \emph{sub-triangulation of the circle} is a closed non-empty subset $C$ of $\D$ satisfying the following two conditions:
\begin{enumerate}
	\item[(Tri1)] $\Delta(C)$ consists of open interiors of triangles.
	\item[(Tri2)] $C$ is the union of non-crossing (non-intersecting except at endpoints), possibly degenerate closed straight line segments with endpoints in $\BS$.
\end{enumerate}
We denote the set of sub-triangulations of the circle by $\CT$.
\end{definition}

Note that given (Tri1), (Tri2) implies that $\nabla(C)$ consists of circular segments with the bounding straight line excluded and the rest of the bounding arc included (see Figure~\ref{f:examplesubtriangulation}).

\begin{figure}[t]
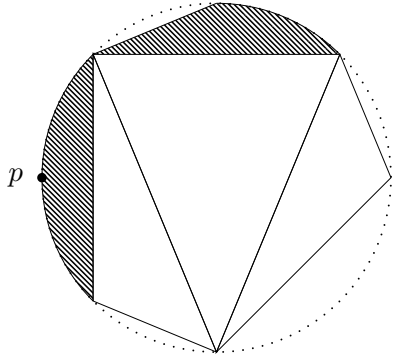

\begin{center}
\figureexamplesubtriangulation
\end{center}
\caption{A sub-triangulation $C$ of the circle. From the definitions \eqref{def:t1}, \eqref{def:t2}, \eqref{def:t3} and \eqref{def:t4}, we have in $C$: $\#\Delta(C)=3$, $\#\nabla(C)=4$, $\#\blacksquare(C)=2$ and $\#\square(C)=1$, (more precisely, $\square(C)=\{p\}$).}\label{f:examplesubtriangulation}
\end{figure}

The set $\CT$ can be equipped with a compact metrizable topology in the following way. Let
\begin{equation}
	\CF(\D):=\big\{F\subseteq\D\ :\ F\neq\emptyset,\ F \mbox{closed}\big\}.
\end{equation}
We equip $\CF(\D)$ with the \emph{Hausdorff metric topology}. That is, we say that a sequence $(F_n)_{n\in\N}$ converges to $F$ in $\CF(\D)$ if and only if for all $\epsilon > 0$ and all large enough $n\in\N$,
\begin{equation}
	F\subseteq F_n^\epsilon \quad \text{ and } \quad F_n\subseteq F^\epsilon,
\end{equation}
where for all $A \in F(\D)$, $A^\epsilon:=\{x\in\D:d(x,A)<\epsilon\}$. As $\D$ is compact, $\CF(\D)$ is a compact metrizable space which contains $\CT$ and it can be shown that $\CT$ is actually a closed subset of $\CF(\D)$ (see \cite[Lemma~4.2]{LoehrWinter2021}).

In order to construct the branch point map from a sub-triangulation in the next subsection, we need another characterization of sub-triangulations of the circle. Roughly speaking, condition (Tri2) can be replaced by the existence of triangles that separate triples of connected components of $\D\setminus C$.

\begin{proposition}[Characterization of sub-triangulations]\label{p:characsubtriangulation}
Let $C$ be a closed non-empty subset of $\D$. Then $C$ is a sub-triangulation of the circle if and only if condition (Tri1) holds, all extreme points of $\conv(C)$ are contained in $\BS$ and
\begin{enumerate}
	\item[(Tri2)'] For $x, y, z \in \Delta(C)\cup\nabla(C)$ pairwise distinct, there exists a unique $c_{xyz}\in\Delta(C)$ such that $x, y, z$ are subsets of pairwise different connected components of $\D\setminus\partial c_{xyz}$.
\end{enumerate}
\end{proposition}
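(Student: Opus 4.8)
The plan is to prove the two implications separately, the forward direction (a sub-triangulation satisfies the three listed conditions) being routine and the converse substantial. For the forward direction, assume (Tri1) and (Tri2); then (Tri1) is immediate. For the extreme points, note that by (Tri2) each segment of $C$ is the convex hull of its two endpoints, which lie in $\BS$, so $\conv(C)=\conv(E)$ for the closed set $E\subseteq\BS$ of all endpoints. An extreme point of $\conv(C)$ cannot lie in the relative interior of a segment of $C$, so it is one of the endpoints in $E\subseteq\BS$ (equivalently, invoke Milman's theorem $\mathrm{ext}(\conv C)\subseteq\overline{C}$ together with the segment structure).

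The heart of the forward direction is (Tri2)'. Fix pairwise distinct $x,y,z\in\Delta(C)\cup\nabla(C)$ and representatives $p\in x$, $q\in y$, $s\in z$. Since the segments of $C$ are non-crossing, any two chords of $C$ that both separate $p$ from $\{q,s\}$ are nested; I would combine this nestedness with (Tri1) to single out, for each of the three points, the chord of $C$ facing the other two, thereby producing three chords whose mutual configuration cuts out a single connected component of $\conv(C)\setminus C$. By (Tri1) this component is an open triangle, and it is the required $c_{xyz}$. Uniqueness follows because any separating triangle must have one side isolating each of $p,q,s$, and non-crossing/nestedness forces these sides, hence the triangle, to coincide with the one just constructed. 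This is the branch-point property of the algebraic tree encoded by $C$, paralleling the one-level treatment of \cite{LoehrWinter2021}.

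For the converse, assume (Tri1), that all extreme points of $\conv(C)$ lie in $\BS$, and (Tri2)', and produce the representation (Tri2). I would argue by contraposition: supposing $C$ is \emph{not} a union of non-crossing chords with endpoints in $\BS$, I exhibit a triple $x,y,z$ violating the existence-or-uniqueness in (Tri2)'. Two model obstructions guide this. First, if $C$ contained two crossing chords, then---exactly as for two diameters, whose union leaves four triangular gaps around the interior crossing point---three of the resulting faces admit no separating triangle in $\Delta(C)$, so existence fails. Second, if $C$ contained a ``solid'' piece not foliated by $\BS$-chords (the extreme case being a solid inscribed triangle, for which $\Delta(C)=\emptyset$ while $\nabla(C)$ has three members), then a triple of the outer circular segments again has no $c_{xyz}\in\Delta(C)$. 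Conversely, once crossings and unfoliated solids are excluded and, using the extreme-point hypothesis, the vertices of every triangle of $\Delta(C)$ are shown to lie on $\BS$, the set $C=\conv(C)\setminus\bigcup_{a\in\Delta(C)}a$ is assembled as the closure of the union of the triangle edges together with its chord-foliated filled parts, which is precisely (Tri2).

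The main obstacle is this converse, and specifically the passage from the purely combinatorial separation axiom (Tri2)' to the global geometric conclusion (Tri2). The delicate points are: (i) manufacturing, from a hypothetical crossing or interior triangle-vertex, an explicit triple of faces that breaks existence or uniqueness of the separating triangle; and (ii) controlling the positive-area (``filled'') parts of $C$, where chords may accumulate densely so that $C$ is not a locally finite union of segments---here I would rely on closedness of $C$ and on (Tri1) forcing every complementary component of $\conv(C)$ to be an open triangle, in order to recover the $\BS$-chord foliation in the limit.
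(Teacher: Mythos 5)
The paper does not actually prove this proposition --- it is imported without proof from the one-level setting of \cite{LoehrWinter2021} --- so your attempt has to stand on its own, and as written it has gaps in both directions. In the forward direction the existence half of (Tri2)' is asserted rather than proved. You need (a) that among the chords of $C$ separating a given component from the other two there is an extremal one (this uses closedness of $C$ and a limit along the nested family of half-discs), and, more seriously, (b) that the region cut out by the three extremal chords is a \emph{single} element of $\Delta(C)$. Point (b) is not automatic: a priori that region could contain further chords or filled parts of $C$ with endpoints on one of its boundary arcs, and one must argue that any such chord either separates one of the three components from the other two (contradicting extremality) or forces the region to decompose further. This is precisely the content that makes the statement a proposition, and ``combine this nestedness with (Tri1) to single out the chord facing the other two'' does not supply it.

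The converse is where the argument really does not close. A contrapositive over ``two model obstructions'' is not a case analysis: a general closed set violating (Tri2) need not contain two crossing chords nor a solid inscribed triangle --- it need not be built from chords at all --- so the reduction to your two models is missing. Moreover the step ``using the extreme-point hypothesis, the vertices of every triangle of $\Delta(C)$ are shown to lie on $\BS$'' is false as stated: a vertex of a complementary triangle is in general not an extreme point of $\conv(C)$. For instance, if $C=\D\setminus a$ with $a$ an open triangle having one vertex at the centre of the disc, then $\conv(C)=\D$ and the interior vertex is not extreme, yet (Tri2) fails there because covering the two sides of $a$ meeting at that vertex forces two chords crossing at it. Pinning the vertices to $\BS$ must come from (Tri2)' (an interior vertex produces several components of $\D\setminus C$ around it admitting no separating triangle, as in your crossing-chords picture), and you would have to exhibit the violating triple for an arbitrary interior vertex. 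Finally, the foliation of the filled parts is not a technicality you can defer to closedness and (Tri1): a positive-area component of $C$ can be written as a union of non-crossing $\BS$-chords only if it meets $\BS$ in at most two arcs (a convex region bounded by three chords and three arcs admits no such lamination, since the chords through a connected open set form a nested, hence two-arc-ended, family), and excluding three or more boundary arcs is again an application of (Tri2)' to components lying in the gaps between those arcs. The robust route is direct rather than contrapositive: write $C=\conv(C)\setminus\bigcup_{a\in\Delta(C)}a$, use (Tri2)' to force all triangle vertices onto $\BS$ and to bound the number of boundary arcs of each filled component by two, and then exhibit the decomposition explicitly as the triangle sides together with the linear interpolations $\|_b(C)$ used later in the paper.
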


\subsection{Coding of binary algebraic two-level measure trees}\label{s:coding}
We define here a map that associates a binary a2m tree to a sub-triangulation together with a two-level measure on the circle line whose intensity measure is the Lebesgue measure. We show that this coding map is surjective and continuous.

\josue{We start by defining the set of binary algebraic two-level measure trees with no atoms on the skeleton. For this, define in an algebraic $(T,c)$ the \emph{degree} of $x\in T$ as the number of components of $T\setminus\{x\}$, and we write
\begin{equation}
	\mathrm{deg}(x):=\#\{\CS_x(y):y\in T\setminus\{x\}\}.
\end{equation}
A \emph{leaf} is then a point $u\in T$ such that $\mathrm{deg}(u)=1$, and we write $\mathrm{lf}(T,c)$ for the set of leaves. We then define}
\begin{equation}\label{def:T_2^2}
	\T_2^{(2)} := \left\{\chi=\overline{(T,c,\nu)}\in\T^{(2)}:\deg(v)\leq 3~\forall v\in T,~\at(\mu)\subseteq\lf(T,c)~\forall\mu\in\supp(\nu)\right\}.
\end{equation}
Note that for all $(T,c,\nu)\in\T_2^{(2)}$, we also have $\at(M_\nu)\subseteq\lf(T,c)$.

In Theorem~\ref{t:codingmap}, we introduce a coding map which associates an a2m tree $(T,c,\nu)\in\T_2^{(2)}$ to each two-level sub-triangulation, which are defined below:

\begin{definition}[Two-level sub-triangulation]
A \emph{two-level sub-triangulation of the circle} $(C,K)$ consists of a sub-triangulation of the circle $C$ together with a two-level measure $K\in\CM_1(\CM_1(\BS))$ on $\BS$ such that its intensity measure $M_K$ is $\lambda_\BS$ the Lebesgue on the circle line.

We denote by
\begin{equation}\label{e:TLsubtri}
	\mathfrak{D} := \big\{(C,K)\in\CT\times\CM_1(\CM_1(\BS)):M_K=\lambda_\BS\big\}
\end{equation}
the set of all two-level sub-triangulations of the circle.
\end{definition}

In the construction we give below, the algebraic measure tree $(T,c,M_\nu)$ is the one associated to the sub-triangulation $C$ given by the coding map in \cite[Theorem~2]{LoehrWinter2021}. In particular, a triangle $x\in\Delta(C)$ will correspond to a branch point of the associated a2m tree and a circular segment $x\in\nabla(C)$ to an atom of the intensity measure $M_\nu$ of the a2m tree such that the arc $x\cap\BS$ has length the $M_\nu$-mass of the corresponding atom.

\begin{example}[A simple case]
Consider a triangulation $C$ of the regular $n$-gon into $n-2$ triangles (see Figure~\ref{f:treefromngon}). In this case, the coded tree is the dual graph. That is, a triangle correponds to a branch point of the tree, and two branch points of the tree are connected by an edge if and only if the triangles are adjacent. Each circular segment corresponds to a leaf and $M_\nu$ will assign to each leaf the length of the circular segment, i.e.\ $n^{-1}$ since the $n$-gon is regular.

We then add on the circle line a two-level measure $K$ such that $M_K=\lambda_\BS$ by defining
\begin{equation}
	K=\frac{1}{2}\delta_{\kappa_1}+\frac{1}{2}\delta_{\kappa_2},
\end{equation}
where $\kappa_1,\kappa_2\in\CM_1(\BS)$ are defined in Figure~\ref{f:treefromngon}. Then the associated two-level measure $\nu$ on the tree is given by $\frac{1}{2}\delta_{\mu_1}+\frac{1}{2}\delta_{\mu_2}$ where $\mu_i$ assigns to each leaf the $\kappa_i$-mass carried by the corresponding circular segment.
\end{example}

\begin{figure}[t]
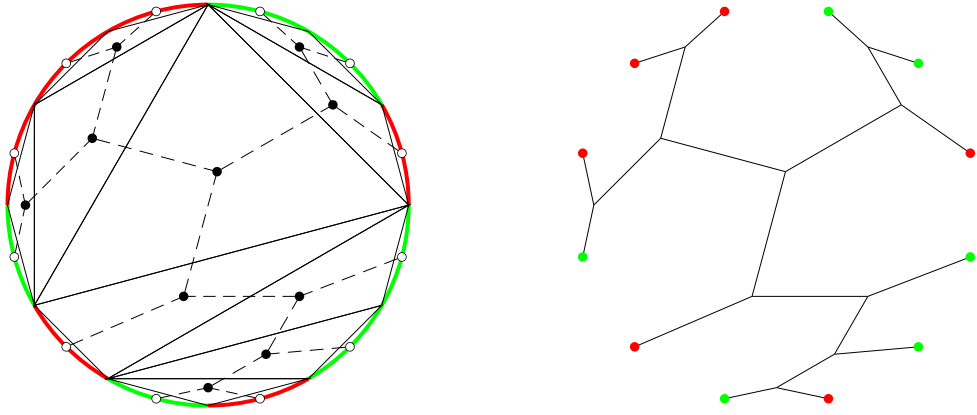

\begin{center}
\figuretreefromngon
\end{center}
\caption{A triangulation of the 12-gon and the tree coded by it. Suppose that we add on the triangulation the two-level measure $K=\frac{1}{2}\delta_{\kappa_1}+\frac{1}{2}\delta_{\kappa_2}$, where $\kappa_1$ is the renormalized Lebesgue measure on the union of the red circular segments and $\kappa_2$ on the union of the green circular segments. Then the corresponding two-level measure on the tree is given by $\frac{1}{2}\delta_{\mu_1}+\frac{1}{2}\delta_{\mu_2}$ where $\mu_1$ assigns mass $\frac{1}{6}$ to each red leaf and $\mu_2$ to each green leaf.}\label{f:treefromngon}
\end{figure}

\begin{remark}[About the condition $M_K=\lambda_\BS$]
Consider the sub-triangulation $C$ and $K:=\delta_{\delta_0}$ as in Figure~\ref{f:conditionMK} on the left. We need a condition on $K$ to avoid such a case where $M_K$ puts an atom on an endpoint of a line segment of $C$ separating two circular segments, \josue{as we would not know where to assign the mass in the tree}. We could have overcome this issue by deciding \emph{a priori} to which of the two circular segment the atom adds mass. However, with this solution, the coding map would not be continuous when $\CT$ is equipped with the Hausdorff metric topology and $\CM_1(\CM_1(\BS))$ with the weak topology. Indeed, suppose we decide to add the atoms on endpoints to the mass of the circular segment after the endpoint and let $K_n:=\delta_{\frac{1}{2}\delta_0+\frac{1}{2}\delta_{-n^{-1}}}$ (see Figure~\ref{f:conditionMK} on the right). Then the sequence $(C,K_n)$ converges to $(C,K)$ but it is not true of the corresponding a2m trees since the one associated with $(C,K_n)$ has two leaves carrying mass but not the one associated with $(C,K)$.

The condition $M_K=\lambda_\BS$ prevents such cases from occurring and it seems natural for the specific reason that it also ensures that the algebraic measure tree $(T,c,M_\nu)$ corresponding to $(C,K)$ is the one associated to the sub-triangulation $C$ in the one-level case.
\end{remark}

\begin{figure}[t]
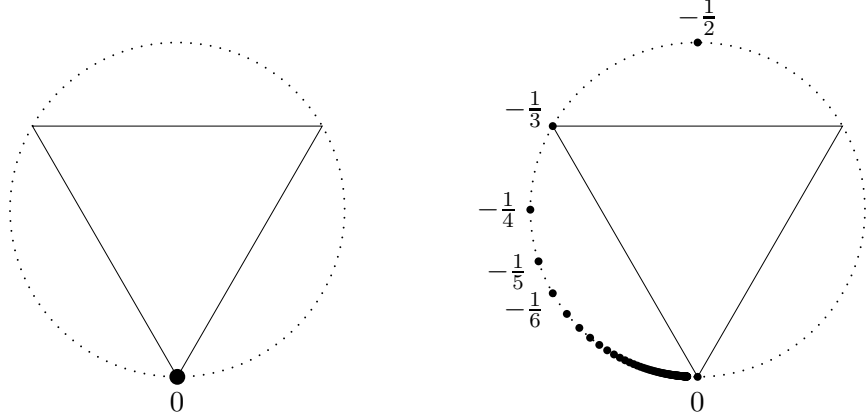

\begin{center}
\figureconditionMK
\end{center}
\caption{Examples of two-level measures $K$ on the circle such that $M_K\neq\lambda_\BS$. On the left, $K=\delta_{\delta_0}$ and on the right, we represent $K_n=\delta_{\frac{1}{2}\delta_0+\frac{1}{2}\delta_{n^{-1}}}$ for several $n\in\N$. Then $K_n$ converges weakly to $K$. If we add the atom on endpoints to the mass of the circular segment after the endpoint, the tree associated with $(C,K_n)$ has two leaves carrying mass but the one associated with $(C,K)$ is a sinle point carrying mass. Therefore the sequence of trees corresponding to $(C,K_n)$ do not converge to the one associated with $(C,K)$.}\label{f:conditionMK}
\end{figure}

Before stating the result, we need further notation. We first define on the sub-triangulation the points that will correspond through the coding construction to leaves without mass. For $x\in\Delta(C)$, let $p_i(x)$, $i=1,2,3$, be the mid-points of the three arcs of $\BS\setminus\partial x$ \josue{and let $\mathrm{arc}_x(p_i(x))$ be the arc of $\BS\setminus\partial x$ that contains $p_i(x)$}. Define
\begin{equation}\label{def:t3}
	\square(C):=\big\{\{p_i(x)\}:x\in\Delta(C),i\in\{1,2,3\},\mathrm{arc}_x(p_i(x))\subseteq C\big\}
\end{equation}
which is the set of mid-points of ``filled'' circular segments for $C$ (see Figure~\ref{f:examplesubtriangulation}). \\
In the tree, the corresponding leaves will be connected to the rest of the tree through line segments with non-atomic measure. For $(T,c,\nu)$ an algebraic measure tree, we call an interval and denote it by $(v,w)$ a subset of $\supp(M_\nu)$ such that $(v,w)\cap\at(M_\nu)=\emptyset$ and $(v,w)\cap\br(T,c)=\emptyset$ a \emph{line segment with non-atomic measure}. We denote by
\begin{equation}
	\seg(T,c,\nu)
\end{equation}
the set of maximal (w.r.t.\ inclusion) line segments with non-atomic measure. Note that if $(T,c)$ is order complete, every line segment with non-atomic measure is included in some $(v,w)\in\seg(T,c,\nu)$. \\
The analog of these line segments in the sub-triangulation are ``filled'' areas, that is, elements of
\begin{equation}\label{def:t4}
	\blacksquare(C):=\big\{\overline{b}~:~b\text{ is a connected component of }C^\circ\big\}.
\end{equation}

For such a filled area $b\in\blacksquare(C)$, it will be important to be able to know the mass distribution of $K$ along $b\cap\BS$, and not only the total mass carried by $b\cap\BS$. For this reason, we use that by definition of a sub-triangulation, $b\in\blacksquare(C)$ is the union of non-crossing closed straight line segments with endpoints in $\BS$. However, there are several possible ways to decompose $b$ into infinitely many line segments. For the sake of simplicity, we choose to partition $b$ ``linearily'' in the following sense. Recall that we identify $\BS$ with $[0,1]$ where the endpoints are glued. If $b\cap\BS$ is not connected, then it is the union of two circular segments $[x,y]$ and $[y',x']$ (with $x=y$ or $x'=y'$ possibly, see Figure~\ref{f:partitionfilledareas}, left). If $b\cap\BS$ is connected, it is a circular segment $[x,x']$ and we take $y=y'$ to be its mid-point (which belongs to $\square(C)$), so that $b\cap\BS$ is still the union of two circular segments (see Figure~\ref{f:partitionfilledareas}, right). We then define $\|_b(C)$ as the set of all straight line segments connecting a point of $[x,y]$ to a point of $[y',x']$ in a linear way (see Figure~\ref{f:partitionfilledareas}). For example, if $x\leq y\leq y'\leq x'$,
\begin{equation}\label{e:partitionofb}
	\|_b(C):=\left\{[(1-t)x+ty,(1-t)x'+ty']~\big|~t\in[0,1]\right\}.
\end{equation}

Define
\begin{equation}
	\|(C):=\bigcup_{b\in\blacksquare(C)}\|_b(C).
\end{equation}

\begin{figure}[t]
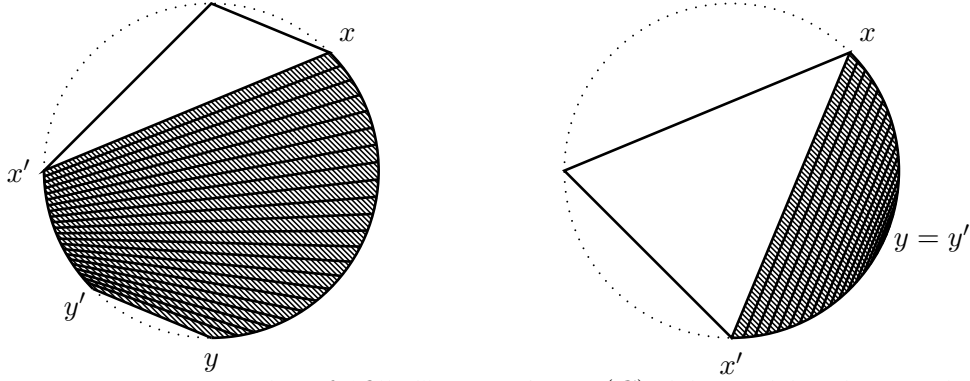

\begin{center}
\figurefilledareas
\end{center}
\caption{Two examples of ``filled'' areas $b\in\blacksquare(C)$ delimited by the circular segments $[x,y]$ and $[x',y']$. They are partitioned into straight line segments with endpoints in $[x,y]$ and $[x',y']$.}\label{f:partitionfilledareas}
\end{figure}

Finally, we define for the sub-triangulation the analog of the components in an algebraic tree. For $x\in\Delta(C)\cup\nabla(C)$, and $y\subseteq\D$ connected and disjoint from $\partial x$, let 
\begin{equation}
	\comp_x(y):= \text{the connected component of }\D\setminus\partial x\text{ which contains }y.
\end{equation}
Similarly, for $x\in\|(C)$ and $y\subseteq\D$ connected and disjoint from $x$, let
\begin{equation}
	\comp_x(y):= \text{the connected component of }\D\setminus x\text{ which contains }y.
\end{equation}
We also call them \emph{components} (of the sub-triangulation) (see Figure~\ref{f:definitioncomp}). Define also $\comp_x(x):=x$ for $x\in\|(C)\cup\square(C)$.

\begin{figure}[t]
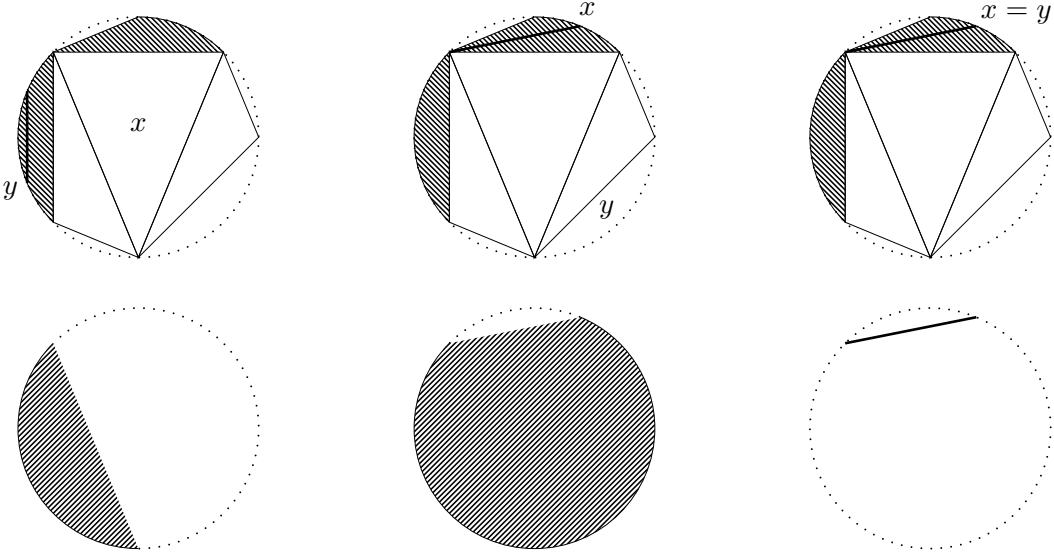

\begin{center}
\figuredefinitioncomp
\end{center}
\caption{Three examples of components $\comp_x(y)$ in a sub-triangulation of the circle $C$. The top line represents the triangulation, $x$ and $y$. The second line represents the corresponding components $\comp_x(y)$. On the left, $x\in\Delta(C)$ and $y\in\|(C)$. In the middle, $x\in\|(C)$ and $y\in\nabla(C)$. In these two cases, $\comp_x(y)$ is a filled cap region of the disc with the boundary straight line excluded and the rest of the boundary arc included. On the right, $x=y\in\|(C)$ and $\comp_x(x)=x$. }\label{f:definitioncomp}
\end{figure}

The following lemma gives the tree associated to a triangulation $C$. Contrary to \cite[Lemma~4.7]{LoehrWinter2021}, we add the set $\|(C)$ to the ``skeleton'' of the tree as it is important in the two-level case to know how the mass is distributed along a line segment of the tree. \josue{For $(T,c)$ an algebraic tree and $x\in T$, we first extend the notation \eqref{e:components} for components to
\begin{equation}
	\CS_x(x):=\{x\}.
\end{equation}}

\begin{lemma}[induced branch-point map]
For $C\in\CT$, let $V_C:=\Delta(C)\cup\nabla(C)\cup\square(C)\cup\|(C)$. If $V_C\neq \emptyset$, then there exists a unique branch-point map $c_V\colon V_C^3\to V_C$, such that $(V_C,c_V)$ is an algebraic tree with
\begin{equation}
	\CS_x^{(V_C,c_V)}(y)=\{v\in V_C:\comp_x(y)=\comp_x(v)\}
\end{equation}
for $x,y\in V_C$. In particular, $\deg(x)=3$ for all $x\in\Delta(C)$, $\deg(x)=1$ for all $x\in\nabla(C)\cup\square(C)$ and $\deg(x)=2$ for all $x\in\|(C)\setminus\square(C)$.
\label{l:inducedbpmap}
\end{lemma}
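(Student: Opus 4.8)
The plan is to define the branch-point map $c_V$ directly from the way the elements of $V_C$ cut the disc, and then to verify in turn the axioms (2pc)--(4pc), the component identity, the degrees, and uniqueness. For $x,y,z\in V_C$ I would set $c_V(x,y,z)$ to be the unique $v\in V_C$ for which $\comp_v(x),\comp_v(y),\comp_v(z)$ are pairwise distinct (with the convention that the component of $v$ ``through itself'' is $v$), so that $c_V$ returns one of its arguments exactly in the degenerate case where one of $x,y,z$ already lies between the other two. Restricted to $\Delta(C)\cup\nabla(C)$ this is precisely the separating triangle $c_{xyz}$ provided by (Tri2)' in Proposition~\ref{p:characsubtriangulation}, so the point of the construction is to check that the same recipe remains single-valued once the midpoints $\square(C)$ and, above all, the line segments $\|(C)$ are admitted.

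The technical heart is to prove that $c_V(x,y,z)$ exists and is unique for every triple. Elements of $\square(C)$ behave as degenerate circular segments, so any triple avoiding $\|(C)$ reduces to (Tri2)' after a short check of the collinear subcases, recovering the content of \cite[Lemma~4.7]{LoehrWinter2021}. The new phenomenon is $\|(C)$: conceptually the segments $\|_b(C)$ subdivide each filled area $b\in\blacksquare(C)$---which is a single edge of the coarser tree on $\Delta(C)\cup\nabla(C)\cup\square(C)$---into a continuum of degree-$2$ points. Two facts drive the argument. First, inside a fixed $b$ the segments of $\|_b(C)$ are linearly ordered by the parameter $t$ of the partition \eqref{e:partitionofb}, so the branch point of three of them is the one with median parameter. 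Second, a single segment $x\in\|(C)$ cuts $\D$ into exactly two components, hence with respect to any argument lying outside $b$ it behaves as an interior point of that edge. Combining the order inside $b$ with the global two-component separation, and splitting into cases according to how many of $x,y,z$ share a filled area, one obtains existence and uniqueness of the separating element.

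With $c_V$ in hand the remaining assertions are comparatively routine. Axiom (2pc) is immediate, and (3pc) says that the branch point of $x,y$ and a point already on the interval $[x,y]$ is that point---again read off from the separation property; (4pc) follows because for four elements the candidate branch points are nested along the separating regions exactly as in the one-level coding. The component identity $\CS_x^{(V_C,c_V)}(y)=\{v\in V_C:\comp_x(y)=\comp_x(v)\}$ is then just the unwinding of the relation $\sim_x$, since $c_V(x,v,y)\neq x$ holds precisely when $v$ and $y$ are not separated by $x$, i.e.\ when $\comp_x(v)=\comp_x(y)$. The degrees are an immediate count of the components meeting $V_C$: three for $x\in\Delta(C)$ (the three components of $\D\setminus\partial x$), one for the leaves $x\in\nabla(C)\cup\square(C)$, and two for $x\in\|(C)\setminus\square(C)$ (the two components of $\D\setminus x$). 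Uniqueness of $c_V$ is forced because the prescribed component structure already determines every interval $[x,y]$ and hence the branch-point map.

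The step I expect to be hardest is the existence-and-uniqueness argument above for triples meeting $\|(C)$: one must reconcile the linear order of the segments inside a filled area with the surrounding tree structure, verify that the ``linear'' partition \eqref{e:partitionofb} really makes $\|_b(C)$ order-isomorphic to an interval whose two ends glue consistently to the adjacent triangles or circular segments, and confirm that the midpoint in $\square(C)$ sitting at the closed end of a connected filled circular segment is correctly seen as a leaf rather than a degree-$2$ point. These gluing and endpoint checks are exactly where spurious branching could creep in, so they deserve the most care.
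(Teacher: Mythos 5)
Your proposal is correct and follows essentially the same route as the paper, which simply invokes the separating-triangle property (Tri2)' of Proposition~\ref{p:characsubtriangulation} and asserts that the extension of $c_{xyz}$ to all of $V_C$ (including $\square(C)$ and $\|(C)$) and the verification of the branch-point axioms are ``obvious'' and ``easy to see''. In fact you supply more detail than the published proof on exactly the points it leaves implicit, namely the linear order of the segments within a filled area and the two-component separation by an element of $\|(C)$.
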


\begin{proof}
Condition (Tri2)' of Proposition~\ref{p:characsubtriangulation} gives that for pairwise distinct $x, y, z \in\Delta(C)\cup\nabla(C)$, there is a unique triangle $c_{xyz}\in\Delta(C)$ such that $x, y, z$ are subsets of pairwise different connected components of $\D\setminus\partial c_{xyz}$. This can be extended to triples of points in $V_C$ in an obvious way. It is then easy to see that this defines a branch point map on $V_C$.
\end{proof}

The following theorem is the analog of \cite[Theorem~2]{LoehrWinter2021}. It states that each pair $(C,K)$, where $C$ is a sub-triangulation of the circle and $K$ a two-level measure on $\BS$ with $M_K=\lambda_\BS$, can be associated with a binary a2m tree such that $\Delta(C)$ corresponds to the set of branch points and $\nabla(C)$ corresponds to the set of atoms of the intensity measure of $\nu$ in the tree. Furthermore, $\comp_v(w)$ corresponds to the component $\CS_v(w)$ and its random $\nu$-mass is given by the random $K$-mass carried by $\comp_v(w)\cap \BS$.

\begin{theorem}[Coding map]\label{t:codingmap}
\begin{enumerate}
	\item[(i)] For all $\Gamma=(C,K)\in\mathfrak{D}$, there is a unique (up to equivalence) a2m tree $\chi_\Gamma=(T_\Gamma,c_\Gamma,\nu_\Gamma)\in\T^{(2)}_2$ such that:
	\begin{enumerate}
		\item[(CM1)] $V_C\subseteq T_\Gamma$, $\mathrm{br}(T_\Gamma,c_\Gamma)=\Delta(C)$, and $c_\Gamma$ is an extension of $c_V$, where $(V_C,c_V)$ is defined in Lemma~\ref{l:inducedbpmap}.
		\item[(CM2)] For all $x,y\in V_C$ and $\sigma\geq0$,
		\begin{equation}
			\int\nu_\Gamma(\mathrm{d}\mu)e^{-\sigma\mu(\CS_x(y))}=\int K(\mathrm{d}\kappa)e^{-\sigma\kappa(\comp_x(y)\cap\BS)}.
		\end{equation}
		\item[(CM3)] $\mathrm{at}(M_{\nu_\Gamma})=\nabla(C)$.
	\end{enumerate}
	\item[(ii)] The \emph{coding map} $\tau:\mathfrak{D}\to\T_2^{(2)},~\Gamma\mapsto\chi_\Gamma$ is surjective.
	\item[(iii)] Let $\CT$ be equipped with the Hausdorff metric topology, $\CM_1(\CM_1(\BS))$ with the weak topology, $\CT\times\CM_1(\CM_1(\BS))$ with the product topology and $\T_2^{(2)}$ with the two-level bpdd-Gromov-weak topology. Then the coding map $\tau$ is continuous.
\end{enumerate}
\end{theorem}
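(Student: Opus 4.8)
The plan is to build the a2m tree $\chi_\Gamma$ in two stages: first recover the one-level algebraic measure tree from $C$ alone, then lift the two-level measure $K$ to a two-level measure $\nu_\Gamma$ on this tree. For the first stage I would invoke \cite[Theorem~2]{LoehrWinter2021}: the sub-triangulation $C$ already codes a binary algebraic measure tree $(T,c,M)$ whose skeleton realizes $(V_C,c_V)$ from Lemma~\ref{l:inducedbpmap}, with $\mathrm{br}(T,c)=\Delta(C)$, with the atoms of $M$ indexed by $\nabla(C)$ (the atom at $x\in\nabla(C)$ carrying mass $\lambda_\BS(x\cap\BS)$), and with the non-atomic part of $M$ distributed along $\|(C)$ via the linear partition \eqref{e:partitionofb}. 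This fixes $(T_\Gamma,c_\Gamma)$ as an order-complete extension of $(V_C,c_V)$, and the condition $M_K=\lambda_\BS$ is exactly what guarantees that this underlying one-level tree is unchanged when we pass to the two-level data, giving (CM1) and (CM3).

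The heart of the construction is (CM2). The idea is to transport $K$ along the correspondence $x\mapsto x\cap\BS$ that sends a component $\CS_x(y)$ of the tree to the arc $\comp_x(y)\cap\BS$ of the circle. Concretely, I would define a measurable map $\Psi\colon\CM_1(\BS)\to\CM_1(T_\Gamma)$ sending a sampled measure $\kappa$ on the circle to the measure $\Psi(\kappa)$ on the tree that places, at each leaf/atom corresponding to $x\in\nabla(C)$, the mass $\kappa(x\cap\BS)$, and spreads the remaining $\kappa$-mass on the interior arcs along the line segments in $\|(C)$ using the same linear identification \eqref{e:partitionofb} already used for $\lambda_\BS$. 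Then set $\nu_\Gamma:=\Psi_*K$. The Laplace-transform identity (CM2) is the assertion that, for every component $\CS_x(y)$, $\Psi(\kappa)(\CS_x(y))=\kappa(\comp_x(y)\cap\BS)$ for $K$-a.e.\ $\kappa$; this is an identity between $\mu$-masses and $\kappa$-masses of corresponding sets, and the exponential moments determine the law of $\mu(\CS_x(y))$ under $\nu_\Gamma$. Uniqueness up to equivalence then follows because the components $\{\CS_x(y):x,y\in V_C\}$ generate the Borel $\sigma$-algebra and the joint Laplace functionals in (CM2), taken over finitely many components, determine $\nu_\Gamma$ as a measure on $\CM_1(T_\Gamma)$ by a standard monotone-class argument.

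For surjectivity (ii), given any $\chi=(T,c,\nu)\in\T_2^{(2)}$ I would run the one-level coding of \cite{LoehrWinter2021} on $(T,c,M_\nu)$ to obtain a sub-triangulation $C$, use the arc-length correspondence to pull $\nu$ back to a two-level measure $K$ on $\BS$ with $M_K=\lambda_\BS$ (the pullback of the intensity being Lebesgue is automatic since $M$ was coded by arc lengths), and check $\tau(C,K)=\chi$ via the characterizing properties (CM1)--(CM3). For continuity (iii), the strategy is to reduce two-level bpdd-Gromov-weak convergence of $\tau(C_n,K_n)$ to convergence of finite-dimensional two-level distance polynomials, and to express each such polynomial as an integral against $K_n^{\otimes m}$ of a functional of the shapes spanned by sampled arcs; since shapes are determined by the separating triangles in $\Delta(C_n)$, Hausdorff convergence $C_n\to C$ controls the shape functional at $\lambda_\BS$-almost every configuration of sample points, while weak convergence $K_n\to K$ handles the outer integration. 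I expect the main obstacle to be precisely this continuity argument: one must show that the ``which component'' and ``which branch-point shape'' maps, as functions of the sampled circle points and of $C$, are continuous (or at least almost-everywhere continuous) along Hausdorff-convergent $C_n$, so that one can pass to the limit inside the nested integrals. The condition $M_K=\lambda_\BS$ is what rules out mass sitting on segment endpoints, as the remark preceding the theorem shows, and it is exactly this that restores the almost-everywhere continuity needed to combine with the weak convergence of $K_n$.
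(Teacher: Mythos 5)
Your overall strategy coincides with the paper's: define $\nu_\Gamma$ as the push-forward of $K$ under a map $\kappa\mapsto\mu^\kappa$ that transports mass along the correspondence $\CS_x(y)\leftrightarrow\comp_x(y)\cap\BS$, run the one-level coding of $(T,c,M_\nu)$ for surjectivity, and prove continuity through sampled shapes. However, there are two genuine gaps in the technical core. First, in (i) your map $\Psi$ is only prescribed as a set function on the components $\CS_x(y)$, $x,y\in V_C$ (plus an informal ``spread the rest along $\|(C)$''); it is not yet a Borel probability measure on a tree. The paper has to (a) extend $(V_C,c_V)$ to an order separable algebraic \emph{continuum} tree, which means adjoining an uncountable set of leaves arising as limits of increasing sequences of branch points and filling in all edges, and (b) invoke the extension result \cite[Proposition~3.12]{LoehrWinter2021} (Proposition~\ref{p:extmeasure}), whose hypotheses --- additivity at each vertex, the two-point inequality, and order left-continuity of $x\mapsto\kappa(\comp_x(y)\cap\BS)$ --- hold only for $K$-almost every $\kappa$, precisely because $M_K=\lambda_\BS$ forces $\kappa(\partial x\cap\BS)=0$ a.e. Your prescription also says nothing about $\kappa$-mass sitting at accumulation points of triangles, which corresponds to diffuse mass on the adjoined limit leaves; without the extension theorem the measure $\Psi(\kappa)$ is simply not determined there.

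Second, in (ii) the claim that one can ``pull $\nu$ back'' to $K$ and that $M_K=\lambda_\BS$ is ``automatic'' does not survive scrutiny: there is no map $T\to\BS$ along which to push $\nu$ forward, since tree points correspond to \emph{subsets} of the disc (triangles, chords, circular segments, or degenerate limits thereof). The paper instead samples an infinite array from $\nu$, assigns to each sampled tree point a point or a rescaled arc-Lebesgue measure on $\BS$ according to a five-case classification of its position (atom leaf, skeleton point adjacent or not to a massless leaf, massless leaf of each of two kinds), takes subsequential weak limits to get the $\kappa_i$ and then $K$, and only then proves $M_K=\lambda_\BS$ by partitioning $\BS$ into three types of intervals and checking each separately. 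The choice in that classification is not cosmetic: for a filled region $b\in\blacksquare(C)$ with $b\cap\BS$ equal to an interval union a singleton, all the mass must be placed on the endpoint lying in the non-degenerate interval, otherwise $M_K\neq\lambda_\BS$ and the constructed pair leaves $\mathfrak{D}$. Your part (iii) is essentially the paper's route (continuity with respect to sampled shapes plus the comparison of topologies), and your identification of the role of $M_K=\lambda_\BS$ there is correct.
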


\josue{We first state an extension result from \cite{LoehrWinter2021} for algebraic measure trees that will be a key ingredient in the proof. Recall that for $(T,c)$ an algebraic tree and $x\in T$, we have $\CS_x(x):=\{x\}$ so that $T$ is the disjoint union of the $\deg(x)+1$ sets in
\begin{equation}
	\CC_x := \big\{\CS_x(y)\ :\ y\in T\big\}.
\end{equation}
For $y\in T$ and $V\subseteq T$, we call a function $f\colon V\to \R$ \emph{order-left continuous} on $V$ with respect to the partial order $\leq_y$ (defined by \eqref{e:partialorder}) if for all $x,x_n\in V$ such that $x_n\uparrow_y x$, we have $\lim_{n\to\infty}f(x_n)=f(x)$, where $x_n\uparrow_y x$ means that $x_1\leq_y x_2\leq_y \cdots$ and $x=\sup_{n\in\N}x_n$ with respect to $\leq_y$.

From \cite[Proposition~2.20]{LoehrWinter2021}, we know that if an algebraic tree $(T,c)$ is order separable, there exists a subset $V$ of $T$ that is \emph{order dense}, i.e., such that for all $x, y \in T$ with $x \neq y$,
\begin{equation}\label{e:orderdenseset}
	D\cap[x,y)\neq\emptyset.
\end{equation}
The following result \cite[Proposition~3.12]{LoehrWinter2021} allows to extend the function that associates to subtrees their masses to a measure on the algebraic tree. 

\begin{proposition}[Extension to a measure]
Let $(T, c)$ be an order separable algebraic continuum tree, and $V \subseteq T$ order dense. Then a set-function $\mu_0 : \CC_V := \bigcup_{x\in V} \CC_x \to [0, 1]$ has a unique extension to a probability measure on $\CB(T, c)$ if it satisfies
\begin{enumerate}
	\item For all $x\in V$, $\sum_{A\in\CC_x}\mu_0(A)=1$.
	\item For all $x,y\in V$ with $x\neq y$,
	\begin{equation}
		\mu_0(\CS_x(y))+\mu_0(\CS_y(x))\geq 1.
	\end{equation}
	\item For every $y\in V$, the function $\psi_y:x\mapsto \mu_0(\CS_x(y))$ is order left-continuous on $V$ with respect to $\leq_y$.
\end{enumerate}
\label{p:extmeasure}
\end{proposition}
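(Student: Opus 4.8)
The plan is to realize $\mu_0$ as a finitely additive content on a generating algebra, and then promote it to a Borel measure via the Carath\'eodory extension theorem, with conditions (1)--(2) furnishing additivity and nonnegativity and condition (3) furnishing $\sigma$-additivity. The first thing I would record is the purely tree-theoretic identity: for distinct $x,y\in T$ one has $\CS_x(y)\cup\CS_y(x)=T$, while $\CS_x(y)\cap\CS_y(x)$ is exactly the set of points lying strictly between $x$ and $y$. Hence, were $\mu_0$ already a measure, condition (1) would force $\mu_0(T)=1$ together with additivity across the pieces at a single cut point, and condition (2) would read $\mu_0(\CS_x(y)\cap\CS_y(x))\ge 0$. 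These are precisely the consistency relations one needs. Let $\CA$ be the Boolean algebra generated by the components $\{\CS_x(y):x,y\in V\}$ together with $\emptyset$ and $T$. Since $(T,c)$ is order separable and second countable (Remark~\ref{r:conditionmetrizable}) and $V$ is order dense, these components generate $\CB(T,c)$, so $\sigma(\CA)=\CB(T,c)$.

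Next I would define the content on $\CA$. Every element of $\CA$ is a finite disjoint union of \emph{cells}, a cell being a nonempty finite intersection of components $\CS_{x_i}(y_i)$ and their complements; in a tree such a cell is determined by choosing, at each of finitely many cut points of a finite $F\subseteq V$, which side one keeps. Because $(T,c)$ is a tree, adding one further point $x\in V$ to $F$ refines the partition by splitting precisely the one cell containing $x$ into the traces of the pieces of $\CC_x$ on that cell. I would assign cell masses by induction on $|F|$ through this splitting, using condition (1) to guarantee that the pieces of a split sum to the mass of the cell being split, and condition (2) to guarantee that every assigned value stays in $[0,1]$. A short consistency check — the outcome of successive splittings does not depend on the order in which the points of $F$ are adjoined, again by the single-point relations above — shows the assignment is well defined, and finite additivity on $\CA$ follows. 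This yields a nonnegative, normalized, finitely additive content $\mu$ on $\CA$ extending $\mu_0$ on $\CC_V$.

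It remains to show $\mu$ is $\sigma$-additive on $\CA$, equivalently continuous at $\emptyset$; this is where condition (3) and order completeness enter. The only way a decreasing sequence $A_n\downarrow\emptyset$ in $\CA$ can fail to satisfy $\mu(A_n)\to 0$ is if mass concentrates in the limit at a boundary cut point. I would reduce this to a single chain, namely a nested family of components $\CS_{x_n}(y)$ with $x_n\uparrow_y x$, whose supremum $x=\sup_n x_n$ exists by order completeness. Order-left continuity of $\psi_y$ (condition (3)) gives $\mu_0(\CS_{x_n}(y))\to\mu_0(\CS_x(y))$, so no mass is lost across the limit and the content respects monotone limits along chains. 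Together with second countability and the compactness of $T$ (order complete plus order separable, Remark~\ref{r:conditionmetrizable}), a standard inner-approximation argument then upgrades this to continuity at $\emptyset$, so $\mu$ is a premeasure. Carath\'eodory's extension theorem produces a probability measure on $\CB(T,c)$, and uniqueness is immediate since $\CA$ is a $\cap$-stable generating algebra, so any two Borel extensions agreeing on $\CA$ coincide.

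The main obstacle I anticipate is precisely the $\sigma$-additivity step: condition (3) is a one-sided continuity statement along increasing $\leq_y$-chains with suprema in $T$, whereas Carath\'eodory requires continuity at $\emptyset$ for arbitrary decreasing sequences in $\CA$. The work is to show that every way a cell can shrink to the empty set is captured, via the interval structure of the tree, by such an increasing chain of separating points, with order completeness guaranteeing both that the relevant supremum exists in $T$ and that the limiting component, including its possibly atomic boundary point, is the one controlled by $\psi_y$. Branch points and the right-hand endpoints of intervals, where $\mu_0$ may place atoms, are the delicate cases and must be treated so that mass at a limit point is attributed consistently; by comparison, the bookkeeping for well-definedness and additivity of the cell-mass assignment in the second step is a secondary and more routine technical burden.
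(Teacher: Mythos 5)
Your division of labor among the three hypotheses is wrong, and the error sits exactly in the step you dismiss as ``routine bookkeeping.'' You claim conditions (1)--(2) alone make the inductive cell-mass assignment a nonnegative finitely additive content, with (3) reserved for $\sigma$-additivity. This already fails at the first cell with three sides. Take $x_1,x_2,x_3\in V$ with branch point $c:=c(x_1,x_2,x_3)\notin\{x_1,x_2,x_3\}$, and note $\CS_{x_i}(c)=\CS_{x_i}(x_j)\in\CC_V$ for $j\neq i$. Since $T\setminus\bigcap_i\CS_{x_i}(c)$ is the disjoint union of the three sets $T\setminus\CS_{x_i}(c)$, finite additivity forces the central cell to receive mass
\begin{equation}
	\mu\Bigl(\bigcap_{i=1}^3\CS_{x_i}(c)\Bigr)=\sum_{i=1}^3\mu_0\bigl(\CS_{x_i}(c)\bigr)-2,
\end{equation}
so nonnegativity requires the three-point inequality $\sum_i\mu_0(\CS_{x_i}(c))\geq 2$. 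Summing condition (2) over the three pairs $(x_i,x_j)$ yields only $\sum_i\mu_0(\CS_{x_i}(c))\geq\frac32$, which is strictly weaker. If $c\in V$ the inequality is rescued by combining (2) for the pairs $(x_i,c)$ with (1) at $c$: $\sum_i\mu_0(\CS_{x_i}(c))\geq 3-\sum_i\mu_0(\CS_c(x_i))\geq 2$. But order density of $V$ does not put branch points into $V$, so when $c\notin V$ you must approach $c$ through $V$ along a branch and pass two-point relations to the limit --- that is, condition (3) is load-bearing already for the \emph{existence and nonnegativity of the content}, before Carath\'eodory ever enters. Your sketch never performs this limiting argument, and without it the induction on $|F|$ simply does not close.

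A second, related gap: as the paper defines it, order-left continuity quantifies over $x,x_n\in V$, so (3) says nothing about increasing sequences whose supremum (which order completeness provides in $T$) lies outside $V$; yet both the repair above and your inner-regularity argument for continuity at $\emptyset$ need exactly such limits, e.g.\ at branch points and at tops of segments where atoms may sit. One must first extend $\mu_0$ monotonically to components at arbitrary points of $T$ (monotonicity along $\leq_y$-chains in $V$ does follow from (1) and (2)) and then verify that (1)--(3) survive the extension --- a genuine step, not an afterthought; tellingly, when the present paper invokes Proposition~\ref{p:extmeasure} in the proof of Theorem~\ref{t:codingmap}(i), it first extends $\mu_0^\kappa$ to all of $\bigcup_{x\in T}\CC_x$ and applies the result with $V=T$, sidestepping precisely this issue. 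Two smaller slips: $\CS_x(y)\cap\CS_y(x)$ is not the set of points strictly between $x$ and $y$ but $\{z:c(x,y,z)\notin\{x,y\}\}$, i.e.\ everything hanging off the open interval; and your cell-splitting tacitly assumes finite degree, whereas here $\CC_x$ may be infinite. Finally, be aware that this paper contains no proof of the statement --- it is imported verbatim from \cite[Proposition~3.12]{LoehrWinter2021} --- so your Carath\'eodory skeleton can only be judged as a reconstruction; as it stands, with the content step broken and (3) misallocated, it is not yet a proof.
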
}

Let us now sketch the proof of Theorem~\ref{t:codingmap} which is proved below. (i) We start by extending the tree $(V_c,c_V)$ to an algebraic continuum tree and apply Proposition~\ref{p:extmeasure} ``almost surely'' to construct the two-level measure $\nu$. (ii) We give the proof of the surjectivity of the coding map $\tau$ in three steps. For $\chi=(T,c,\nu)\in\T_2^{(2)}$,
\begin{enumerate}
	\item we first construct the sub-triangulation $C$ associated with the algebraic measure tree $(T,c,M_\nu)$ as in the proof of \cite[Theorem~2]{LoehrWinter2021}. That is, for each branch point we remove an open triangle from the set $\D$ and for each atom of $M_\nu$, we remove a circular segment.
	\item we then build $K$ by weak approximation. For this, we rely on a result from \cite{Meizis2019} that the two-level measure $\nu$ can be reconstructed from an infinite set of points randomly sampled. The two-level measure $K$ is then built in a similar fashion by using a correspondance between points in the sample and subsets of the circle.
	\item finally, we prove that the constructed pair $(C,K)$ is in $\mathfrak{D}$ by partitioning the circle line into different types of intervals and showing that the restriction of $M_K$ to each interval is the Lebesgue measure.
\end{enumerate}
(iii) The proof of the continuity of $\tau$ is defered to the next section, Proposition~\ref{p:continuitytau}.

\begin{proof}[Proof of Theorem~\ref{t:codingmap}]
(i) Let $\Gamma=(C,K)\in\mathfrak{D}$. For $\kappa\in\supp(K)$, we define the set-function $\mu_0^\kappa : \bigcup_{x\in V_C}\CC_x\to[0,1]$ by
\begin{equation}\label{e:mu0kappa}
	\mu_0^\kappa(\CS_x(y)):=\kappa(\comp_x(y)\cap\BS),\quad x,y\in V_C.
\end{equation}
We want to be in a position to apply Proposition~\ref{p:extmeasure} to each $\mu_0^\kappa$. Since $M_K=\lambda_\BS$, for $K$-almost every $\kappa$, for all $x\in V_C$, $\kappa(\partial x\cap\BS)=0$. Thus, for $K$-almost every $\kappa$ and all $x\in V_C$, $\sum_{A\in\CC_x}\mu_0^\kappa(A)=\kappa(\BS)=1$ and $\mu_0^\kappa(\CS_x(y))+\mu_0^\kappa(\CS_y(x))\geq 1$ for $y\neq x$. Furthermore, the function $x\mapsto\mu_0^\kappa(\CS_x(y))$ is order left-continuous on $V_C$ w.r.t.\ $\leq_y$ for all $y\in V_C$ for $K$-almost every $\kappa$. This can be shown by using \eqref{e:mu0kappa} and the continuity from above of the measure $\kappa$, together with the condition $M_K=\lambda_\BS$.

The tree $(V_C,c_V)$ is order separable. Indeed, the set $\Delta(C)\cup\nabla(C)$ is countable and one can construct a countable order dense set of $(V_C,c_V)$ by adding the straight line segments in $\|(C)$ that have at least one rational endpoint in $\BS\simeq[0,1]$. However, $(V_C,c_V)$ is not necessarily order complete. Since we defined $\blacksquare(C)$ to be the set of all closures of connected components of $C^\circ$, the endpoints of line segments are included in $(V_C,c_V)$. But we still need to add the leaves which are limits of an increasing sequence (w.r.t\ $\leq_y$ for some $y\in V_C$) of branch points. Thus we define $\overline{V}_C$ as $V_C$ together with an uncountable set of leaves given by the limit points of these increasing sequences, and we can easily extend $c_V$ to $\overline{c}_{V}$ so that the tree $(\overline{V}_C,\overline{c}_V)$ is order complete and order separable.

Furthermore, $(\overline{V}_C,\overline{c}_V)$ need not to be an algebraic continuum tree because $\edge(\overline{V}_C,\overline{c}_V)$ might be non-empty. Therefore, to apply Proposition~\ref{p:extmeasure} we also extend the tree $(\overline{V}_C,\overline{c}_V)$ to make it an order separable algebraic continuum tree. Let $\{v,w\}$ be an edge of $(\overline{V}_C,\overline{c}_V)$. Define $\widetilde{V}:=\overline{V}_C\uplus(\{(v,w)\}\times(0,1))$. We can extend $\overline{c}_V$ to a branch point map $\widetilde{c}$ on $\widetilde{V}$ in a canonical way such that $[v,w]=\{v,w\}\cup(\{(v,w)\}\times(0,1))$ (see Figure~\ref{f:fillingofedges}). $\overline{V}_C$ is not anymore order dense in $(\widetilde{V},\widetilde{c})$ so we also define, for $\kappa\in\supp(K)$, $\mu_0^\kappa$ on any $\CC_x$ with $x\in \{(v,w)\}\times(0,1)$. For $x\in\{(v,w)\}\times(0,1)$, let $\mu_0^\kappa(\{x\})=0$ and for $y\neq x$,
\begin{equation}
	\mu_0^\kappa(\CS_x(y)):=\begin{cases}
	\mu_0^\kappa(\CS_x(v)) & \text{if }v\in\CS_x(y),\\
	\mu_0^\kappa(\CS_x(w)) & \text{if }w\in\CS_x(y).
	\end{cases}
\end{equation}
Then for $K$-almost every $\kappa$, $\mu_0^\kappa$ still satisfies the conditions (1)-(3) of Proposition~\ref{p:extmeasure} on $(\widetilde{V},\widetilde{c})$. We repeat this extension for each edge in $\edge(\overline{V}_C,\overline{c}_V)$ which is countable as the set $\Delta(C)\cup\nabla(C)$ is itself countable. We denote by $(T,c)$ the extended tree. By construction, it is still order complete and order separable, and $\edge(T,c)=\emptyset$. Since $\mu_0^\kappa$ was extended to $\bigcup_{x\in T}\CC_x$ and $T$ is obviously dense in itself, we can now apply Proposition~\ref{p:extmeasure}. That is, for $K$-almost every $\kappa$, $\mu_0^\kappa$ has a unique extension to a probability measure $\mu^\kappa$ on $\CB(T,c)$.

Define $\nu\in \CM_1(\CM_1(T,c))$ as the pushforward of $K$ under the map $\kappa\mapsto\mu^\kappa$. The triplet $(T,c,\nu)$ will be the triplet $(T_\Gamma, c_\Gamma, \nu_Gamma)$ announced in the point (i) of Theorem \ref{t:codingmap}, but we do not write the subscripts $\Gamma$ for the sake of notation. Let us verify the a2m tree $(T,c,\nu)$ satisfies conditions (CM1)-(CM3). It is easy to see that (CM1) holds, and by \eqref{e:mu0kappa}, for all $x,y\in V_C$, and $\sigma>0$,
\begin{equation}
	\int\nu(\mathrm{d}\mu)e^{-\sigma\mu(\CS_x(y))}=\int K(\mathrm{d}\kappa)e^{-\sigma\mu^\kappa(\CS_x(y))}=\int K(\mathrm{d}\kappa)e^{-\sigma\kappa(\comp_x(y)\cap\BS)},
\end{equation}which is (CM2).
Moreover, points in $T\setminus V_C$ carry no atoms, so $\at(M_\nu)\subseteq V_C$. With (CM2), for each $x\in V_C$,
\begin{equation}
	M_{\nu}\{x\}=M_K(\comp_x(x)\cap\BS)=\lambda(\comp_x(x)\cap\BS),
\end{equation}
which is zero if $x\in\Delta(C)\cup\square(C)$ and strictly positive if $x\in\nabla(C)$, which yields (CM3). To see that it is unique (up to equivalence), notice that in the construction above, the extensions of edges of $(\overline{V}_C,\overline{c}_V)$ carry no mass and that the extension of $\mu_0^\kappa$ to $\mu^\kappa$ is unique for $K$-almost every $\kappa$ by Proposition~\ref{p:extmeasure}.

\begin{figure}[t]
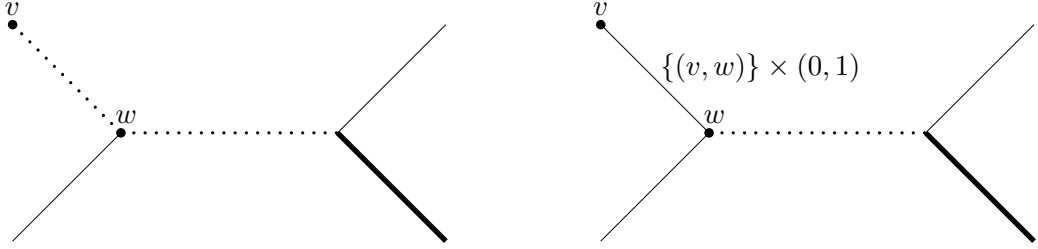

\begin{center}
\figurefillingofedges
\end{center}
\caption{The tree is not a continuum tree because $[v,w]=\{v,w\}$ is an edge. We extend the tree in a natural way so that $[v,w]$ is no longer an edge.}\label{f:fillingofedges}
\end{figure}

\vspace{.25cm}

(ii) We give the proof of the surjectivity in three steps. Let $\chi=(T,c,\nu)\in\T_2^{(2)}$. We construct a sub-triangulation $C$ and a two-level measure $K$ on $\BS$ such that $\tau(C,K)=\chi$. We can suppose w.l.o.g.\ that $(T,c)$ is order complete and that for all $v\in\br(T,c)$, $c_*(M_\nu^{\otimes 3})\{v\}>0$. To make the construction of $K$ easier, we also assume that for all $(v,w)\in\seg(T,c,\nu)$, $M_\nu$ restricted to $(v,w)$ is the Lebesgue measure (see Example~\ref{e:nobpnoat}).

\vspace{.25cm}

\noindent\textbf{Step 1: construction of $C$.} Fix $\rho\in\lf(T,c)$ and recall that $\rho$ induces a partial order relation $\leq_\rho$. We can extend this partial order to a total order $\leq$ in the following way. For $v\in\br(T,c)$, denote by $S_0(v)$, $S_1(v)$ and $S_2(v)$ the three components of $T\setminus\{v\}$ such that $S_0(v)=\CS_v(\rho)$ and with $S_1(v)$ and $S_2(v)$ chosen by picking an order for the two remaining components. Define now for all $v,w$,
\begin{equation}\label{e:totalorder}
		v\leq w \quad \Leftrightarrow \quad \big[v\leq_\rho w\big] \text{ or } \big[v\in S_1(c(v,w,\rho)) \text{ and }w\in S_2(c(v,w,\rho))\big].
\end{equation}
Using this total order on the tree, the construction of $C$ below can be understood as follows. Starting from the root $\rho$ which corresponds to 0 in the identification $\BS\simeq[0,1]$, we read through $T$ according to $\leq$. For each branch point $v$, we draw a triangle whose vertices on $\BS$ are given by the $M_\nu$-mass of points smaller than $v$ and the $M_\nu$-masses of the two components above $v$. For each leaf carrying an atom $w$ of $M_\nu$, we add a straight line segment to $C$ according to the $M_\nu$-mass of points smaller than $w$ and the $M_\nu$-mass of $w$ (see Figure~\ref{f:totalorder}).

\begin{figure}[t]
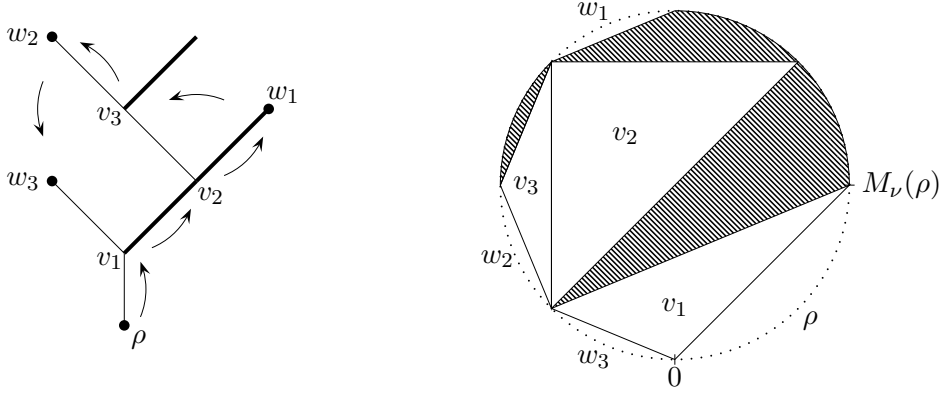

\begin{center}
\figuretotalorder
\end{center}
\caption{An algebraic tree $(T,c)$ and the corresponding sub-triangulation $C$ as constructed in Step 1 of the proof of Theorem~\ref{t:codingmap}(ii). $v_1,v_2,v_3$ are branch points and $\rho,w_1,w_2,w_3$ are leaves carrying an $M_\nu$-atom. More precisely, $M_\nu(\rho)=\frac{1}{4}$ and $M_\nu(w_1)=M_\nu(w_2)=M_\nu(w_3)=\frac{1}{8}$. We extend the partial order $\leq_\rho$ to a total order such that $\rho\leq v_1\leq v_2\leq w_1\leq v_3\leq w_2\leq w_3$.}\label{f:totalorder}
\end{figure}

We first introduce some more notations. For $a\in[0,1]$ and $b,c>0$ with $a+b+c\leq 1$, let $\Delta(a,b,c)\subseteq\D$ be the open triangle with vertices $a,a+b,a+b+c\in\BS \simeq[0,1]$, and let $\ell(a,b)\subseteq\D$ be the straight line from $a$ to $a+b$, and $L(a,b)$ the connected component of $\D\setminus\ell(a,b)$ containing $a+\frac{b}{2}\in\BS$.\\
Now for all $v\in\br(T,c)\cup\at(M_\nu)$, denote by
\begin{equation}
	\alpha(v):=M_\nu\left(\{u\in T:u<v\}\right)
\end{equation}
the total $M_\nu$-mass of points smaller than $v$ (with respect to $\leq$ defined in \eqref{e:totalorder}). In the sub-triangulation $C$ defined below, $\alpha(v)$ gives the first vertex of the triangle or circular segment corresponding to $v$. We define $C$ by
\begin{equation}\label{e:subtrisurj}
	\D\setminus C:=\biguplus_{v\in\br(T,c)}\Delta\big(\alpha(v),M_\nu(S_1(v)),M_\nu(S_2(v))\big)\uplus\biguplus_{w\in\at(M_\nu)}L\big(\alpha(w),M_\nu\{w\}\big).
\end{equation}
Let us show that $C$ is a sub-triangulation of the circle. By definition of $C$, $\mathrm{conv}(C)\setminus C$ is the disjoint union of open triangles, i.e.\ condition (Tri1) is satisfied. Furthermore, the extreme points of $\mathrm{conv}(C)$ are contained in $\BS$, and for $x, y, z \in\Delta(C)\cup\nabla(C)$ distinct, there are corresponding $u, v, w \in T$, and a triangle $c_{xyz}\in\Delta(C)$ corresponding to $c(u, v, w)$, which satisfies the requirements of (Tri2)’. Thus, by Proposition~\ref{p:characsubtriangulation}, $C\in\CT$. Note that $C$ is the sub-triangulation associated with the algebraic measure tree $(T,c,M_\nu)$ in the proof of Theorem~4.8 in \cite{LoehrWinter2021}.

To make the notations coincide with the ones from Lemma~\ref{l:inducedbpmap}, we will use the following correspondences in the rest of the proof. That is, a branch point $v\in\br(T,c)$ will correspond to the triangle $\overline{v}=\Delta\big(\alpha(v),M_\nu(S_1(v)),M_\nu(S_2(v))\big)$ in $\Delta(C)$ and an atom $w\in\at(M_\nu)$ to the circular segment $\overline{w}=L\big(\alpha(w),M_\nu\{w\}\big)$ in $\nabla(C)$. A leaf with zero mass $u\in\lf(T,c)$, but connected to the rest of the tree by a line segment carrying mass, will correspond to the midpoint $\overline{u}\in\square(C)$ of the corresponding ``filled'' circular segment. Finally, we associate, in a ``linear'' way, a point $z\in T$ belonging to a line segment with non-atomic mass in $\seg(T,c,\nu)$ to a straight line segment $\overline{z}\in\|(C)$. That is, if the segment $(v,w)\in\seg(T,c,\nu)$ corresponds to the circular segments $]x,y[$ and $]x',y'[$, the point $y=(1-t)v+tw$ with $t\in(0,1)$ is associated with the straight line segment $\overline{y}:=[(1-t)x+ty,(1-t)x'+ty']$ (see \eqref{e:partitionofb}). Denote by
\begin{equation}
	V_T\subseteq T
\end{equation}
the set of all points $x$ in the tree that have a corresponding subset $\overline{x}\in V_C$.

With these notations, $V_T$ is a subtree of $(T,c)$ and by construction, we have that for all $x,y\in V_T$,
\begin{equation}\label{e:Mnuislambda}
	M_\nu(\CS_x(y))=\lambda_\BS(\comp_{\overline{x}}(\overline{y})\cap\BS).
\end{equation}
Let $b\in\blacksquare(C)$ such that $b\cap\square(C)=\emptyset$, that is, $b$ corresponds to a segment line of the tree carrying mass and that is not connected to a leaf without mass. In this case, $b\cap\BS$ is the union of two disjoint intervals (see Figure~\ref{f:partitionfilledareas}, left). It is important for the next step of the proof to note that from the construction of $C$, one (and only one) of the two intervals is a singleton. Thus, the random $K$-mass we will assign to $b$ needs to be carried by the interval with non-empty interior to ensure that $M_K=\lambda_\BS$.

\vspace{.25cm}

\noindent\textbf{Step 2: construction of $K$. }Let $(x_{ij})_{(i,j)}$ be a random infinite matrix with distribution
\begin{equation}\label{e:infmeasure}
	\int\nu^{\otimes \infty}(\du{\mu})\int\bigotimes_{i\geq 1}\mu_i^{\otimes \infty}(\cdot).
\end{equation}
We assumed the tree $(T,c)$ to be order separable and order complete. Thus the component topology on $(T,c)$ is Polish (see Remark~\ref{r:conditionmetrizable}) and we know from \cite[Proposition~3.7]{Meizis2019} that, almost surely,
\begin{enumerate}
	\item[(i)] for every $i\in\N$, the weak limit $\mu_i:=\underset{n\to\infty}{\text{w-lim}}\frac{1}{n}\sum_{j=1}^n\delta_{x_{ij}}$ exists and has law $\nu$,
	\item[(ii)] the two-level measure $\frac{1}{m}\sum_{i=1}^m \delta_{\mu_i}$ converges weakly to $\nu$, and
	\item[(iii)] for every $j\in\N$, the sequence $(x_{ij})_i$ is dense in $\supp(M_\nu)$.
\end{enumerate}

Fix $i,n\in\N$. We write $\mu_i^n:=\frac{1}{n}\sum_{j=1}^n\delta_{x_{ij}}$ and we first define $\kappa_i^n$ a probability measure on the circle line corresponding to $\mu_i^n$. For each $x_{ij}$, we put weight $\frac{1}{n}$ on a subset of the circle depending on the position of $x_{ij}$ in $(T,c)$. We will make use of the correspondence between points of $V_T\subseteq T$ and subsets of $\D$ discussed at the end of Step~1.  Since $(T,c,\nu)\in\T_2^{(2)}$, $\at(\mu)\subseteq\lf(T,c)$ for all $\mu\in\supp(\nu)$ so we have the following possible cases (see Figure~\ref{f:typesofboundaries}) that correspond to a partition of $T$ in five sets $A_1,\cdots A_5$:

\begin{figure}[t]
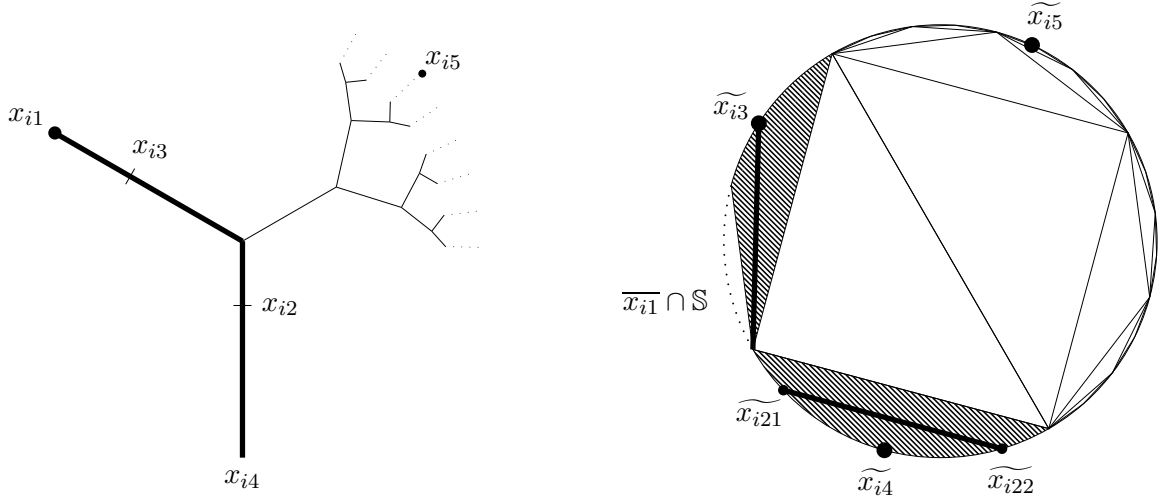

\begin{center}
\figuretypesofboundaries
\end{center}
\caption{For each atom $x_{i1},...,x_{i5}$ of $\mu_i^n$, $\kappa_i^n$ assigns weight $\frac{1}{n}$ on a subset of the circle, which depends on the position of the atom in the tree. For the $M_\nu$-atom leaf $x_{i1}$, $\kappa_i^n$ assigns the rescaled Lebesgue measure on the corresponding circular segment $\overline{x_{i1}}\cap\BS$. For $x_{i2}$, $\kappa_i^n$ splits the mass $\frac{1}{n}$ between the endpoints $\widetilde{x_{i21}}$ and $\widetilde{x_{i22}}$ of the segment $\overline{x_{i2}}$. For the three other ones, $\widetilde{x_k}$ carries weight $\frac{1}{n}$.}\label{f:typesofboundaries}
\end{figure}

\begin{enumerate}
	\item Set $A_1$: if $x_{ij}\in\at(M_\nu)$, $\overline{x_{ij}}$ is the circular segment $L\big(\alpha(x_{ij}),M_\nu\{x_{ij}\})\big)$ and we will put the rescaled Lebesgue measure on the arc $\overline{x_{ij}}\cap\BS$.
	\item Suppose now that $x_{ij}$ belongs to a line segment with non-atomic mass, or equivalently that $\deg(x_{ij})=2$. Since $(T,c)$ is order complete, there exist $v,w\in T$ such that $(v,w)\in\seg(T,c,\nu)$ and $x_{ij}\in(v,w)$.
	\begin{enumerate}
		\item Set $A_2$: if $(v,w)$ is adjacent to a leaf, i.e.\ $\overline{x_{ij}}$ is included in a filled circular segment, then we put weight $\frac{1}{2n}$ on the endpoints $\widetilde{x_{ij1}}$ and $\widetilde{x_{ij2}}$ of the line segment $\overline{x_{ij}}$.
		\item Set $A_3$: if not, then $\overline{x_{ij}}\subset b$ where $b\in\blacksquare(C)$ is such that $b\cap\BS$ is the union of an interval and a singleton. In this case, we put weight $\frac{1}{n}$ on the endpoint $\widetilde{x_{ij}}$ of the line segment $\overline{x_{ij}}$ that is not the singleton.
	\end{enumerate}
	\item Finally, suppose that $x_{ij}\in\lf(T,c)\setminus\at(M_\nu)$.
	\begin{enumerate}
		\item Set $A_4$: if $x_{ij}$ is adjacent to a line segment with non-atomic mass, then $\overline{x_{ij}}\in\square(C)$ and we put weight $\frac{1}{n}$ on $\widetilde{x_{ij}}=\overline{x_{ij}}$.
		\item Set $A_5$: if not, $x_{ij}\notin V_T$, i.e.\ $\overline{x_{ij}}$ is not defined. However, it naturally corresponds to a specific point of the circle. To see this, let $y\in\br(T,c)$. Then there exists $(z_n)_{n\in\N}$ a sequence of distinct branch points in $T$ such that $z_n\leq_y z_{n+1}\leq_y x_{ij}$ for all $n\in\N$. For all $n\in\N$, let $a_n,b_n\in\BS\simeq[0,1]$ such that $\comp_{\overline{z_n}}(\overline{z_{n+1}})\cap\BS=[a_n,b_n]$. We have $[a_{n+1},b_{n+1}]\subseteq[a_n,b_n]$. By \eqref{e:Mnuislambda}, $\lambda_\BS([a_n,b_n])=M_\nu(\CS_{z_n}(z_{n+1}))=M_\nu(\CS_{z_n}(x_{ij}))$. But since $x_{ij}\notin\at(M_\nu)$ and $x_{ij}$ is not adjacent to a line segment with non-atomic mass, $\lim_{n\to\infty}M_\nu(\CS_{z_n}(x_{ij}))=0$. Therefore, there exists $\widetilde{x_{ij}}\in\BS$ unique such that $\widetilde{x_{ij}}=\bigcap_{n\in\N}[a_n,b_n]$. We put weight $\frac{1}{n}$ on $\widetilde{x_{ij}}$.
	\end{enumerate}
\end{enumerate}

Define the random probability measure $\kappa_i^n$ on $\BS$ by
\begin{equation}
	\kappa_i^n:=\sum_{x_{ij}\in A_1}\frac{\lambda_\BS(\cdot\cap \overline{x_{ij}})}{n\lambda_\BS(\BS\cap\overline{x_{ij}})}+\sum_{x_{ij}\in A_2}\frac{\delta_{\widetilde{x_{ij1}}}(\cdot)+\delta_{\widetilde{x_{ij2}}}(\cdot)}{2n}+\sum_{x_{ij}\in A_3\cup A_4\cup A_5}\frac{\delta_{\widetilde{x_{ij}}}(\cdot)}{n}.
\end{equation}

Now for each $i,n\in\N$, $\kappa_i^n$ is a random variable in $\CM_1(\BS)$ which is compact when equipped with the weak topology. Therefore, the sequence $(\kappa_i^n)_n$ is tight and has a subsequence $(\kappa_i^{\varphi_i(n)})_n$ that converges to some random measure $\kappa_i$, for each $i\in\N$. Before taking the weak limit of the empirical distribution of the sequence $(\kappa_i)_i$, note first that a.s.\ $\mu_i$ and $\kappa_i$ agree on (union of) corresponding components on the tree $T$ and on the circle line $\BS$. Indeed, by construction, a.s.\ for all $x,y\in V_T$,
\begin{equation}\label{e:kappainmuin}
	\kappa_i^n(\comp_{\overline{x}}(\overline{y})\cap\BS)=\mu_i^n(\CS_x(y)).
\end{equation}
Furthermore, consider $x,y,x',y'\in V_C$. Then $\comp_x(y)$ and $\comp_{x'}(y')$ are two circular segments delimited by two straight line segments that are disjoint or equal. Thus $\comp_x(y)\cup\comp_{x'}(y')$ is either disjoint, the whole circle $\BS$ or one component if one is included in the other (see Figure~\ref{f:unionofcomponents}). Therefore, we have a.s.\ for all $x_k,y_k\in V_T$,
\begin{equation}
	\kappa_i^n\left(\bigcup_{k\in\N}\comp_{\overline{x_k}}(\overline{y_k})\cap\BS\right)=\mu_i^n\left(\bigcup_{k\in\N}\CS_{x_k}(y_k)\right).
\end{equation}

\begin{figure}[t]
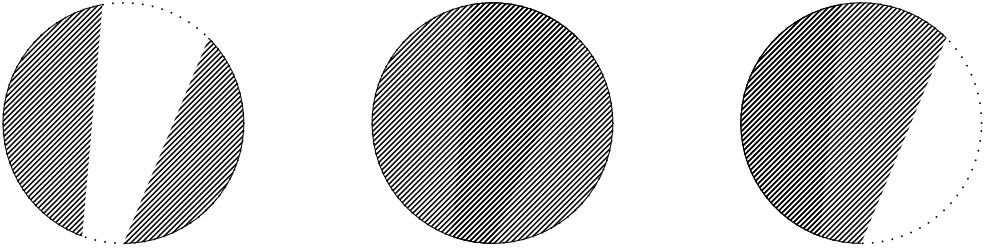

\begin{center}
\figureunionofcomponents
\end{center}
\caption{Thus union of two components is either disjoint (\emph{left}), the whole circle $\BS$ (\emph{middle}) or one component if one is included in the other (\emph{right}).}\label{f:unionofcomponents}
\end{figure}

Then we can generalize it by saying that for $n$ large enough, a.s. for all $x_k,y_k\in V_C$,
\begin{equation}
	(\kappa_1^n,...,\kappa_m^n)\left(\bigcup_{k\in\N}\comp_{\overline{x_k}}(\overline{y_k})\cap\BS\right)=(\mu_1^n,...,\mu_m^n)\left(\bigcup_{k\in\N}\CS_{x_k}(y_k)\right).
\end{equation}
Taking the limit of the subsequences $(\kappa_i^{\varphi_i(n)})_n$ when $n$ goes to infinity for $1\leq i\leq m$,
\begin{equation}
	(\kappa_1,...,\kappa_m)\left(\bigcup_{k\in\N}\comp_{\overline{x_k}}(\overline{y_k})\cap\BS\right) \quad \text{and} \quad (\mu_1,...,\mu_m)\left(\bigcup_{k\in\N}\CS_{x_k}(y_k)\right).
\end{equation}
have same distribution for all $x_k,y_k\in V_T$.

We now denote by $K_m$ the empirical distribution of the sequence $(\kappa_i)_i$, that is,
\begin{equation}
	K_m:=\frac{1}{m}\sum_{i=1}^m\delta_{\kappa_i},
\end{equation}
which is a random variable in $\CM_1(\CM_1(\BS))$. Since $\CM_1(\CM_1(\BS))$ is also compact, the sequence $(K_m)_m$ is tight, so that there exists a subsequence $(K_{\varphi(m)})_m$ that converges to some random two-level measure $K$. For all $m\in\N$, $x_k,y_k\in V_T$, and $\sigma\geq0$,
\begin{equation}
	\frac{1}{m}\sum_{i=1}^m e^{-\sigma\kappa_i\left(\bigcup_{k\in\N}\comp_{\overline{x_k}}(\overline{y_k})\cap\BS\right)} \quad \text{and} \quad \frac{1}{m}\sum_{i=1}^m e^{-\sigma\mu_i\left(\bigcup_{k\in\N}\CS_{x_k}(y_k)\right)}
\end{equation}
have same distribution. Recall that the two-level measure $\frac{1}{m}\sum_{i=1}^m \delta_{\mu_i}$ converges weakly to $\nu$. Thus, taking the limit of the subsequence $(K_{\varphi(m)})_m$, for all $x_k,y_k\in V_T$, and $\sigma\geq0$,
\begin{equation}
	\int K(\mathrm{d}\kappa)e^{-\sigma\kappa\left(\bigcup_{k\in\N}\comp_{\overline{x_k}}(\overline{y_k})\cap\BS\right)} \quad \text{and} \quad \int \nu(\mathrm{d}\mu)e^{-\sigma\mu\left(\bigcup_{k\in\N}\CS_{x_k}(y_k)\right)}
\end{equation}
have same distribution. But the term on the right above is deterministic, so we have that a.s. for all $x,y\in V_T$, and $\sigma\geq0$,
\begin{equation}\label{e:CM2inproof}
	\int K(\mathrm{d}\kappa)e^{-\sigma\kappa\left(\bigcup_{k\in\N}\comp_{\overline{x_k}}(\overline{y_k})\cap\BS\right)} = \int \nu(\mathrm{d}\mu)e^{-\sigma\mu\left(\bigcup_{k\in\N}\CS_{x_k}(y_k)\right)},
\end{equation}
which yields in particular (CM2).

\noindent\textbf{Step 3: $M_K=\lambda_\BS$ a.s.} By taking the derivative of \eqref{e:CM2inproof} at $\sigma=0$ and with \eqref{e:Mnuislambda}, a.s.\ for all $x_k,y_k\in V_C$,
\begin{equation}
	M_K\left(\bigcup_{k\in\N}\comp_{\overline{x_k}}(\overline{y_k})\cap\BS\right)=M_\nu\left(\bigcup_{k\in\N}\CS_{x_k}(y_k)\right)=\lambda_\BS\left(\bigcup_{k\in\N}\comp_{\overline{x_k}}(\overline{y_k})\cap\BS\right).
\end{equation}
In other words, $M_K=\lambda_\BS$ on the $\sigma$-algebra generated by $\{\comp_{\overline{x}}(\overline{y})\cap\BS:x,y\in V_T\}$. To prove the equality on the Borel $\sigma$-algebra of $\BS$, we partition the circle line $\BS$ in the following way. Since $\Delta(C)$ is countable, there exist countably many disjoint intervals of non-empty interior $I_p$, $p\in\N$, such that
\begin{equation}
	\BS=\biguplus_{p\in\N}I_p,
\end{equation}
and for each $p\in\N$, $I_p$ satisfies one of the following (see Figure~\ref{f:typesofboundaries} for a sub-triangulation with the different types of boundaries):
\begin{enumerate}
	\item[(a)] $I_p$ corresponds to an atom leaf of $(T,c)$, that is, there exists $w\in\at(M_\nu)$ such that $(I_p)^\circ=\overline{w}\cap\BS$,
	\item[(b)] $I_p$ corresponds to a line segment in $\seg(T,c,\nu)$, i.e.\ there exists $b\in\blacksquare(C)$ such that $(I_p)^\circ=(b\cap\BS)^\circ$,
	\item[(c)] $I_p\subseteq C$ and for all distinct $x,y\in I_p$, there exists $c_{xy}\in\Delta(C)$ such that $x,y$ belong to different connected components of $\D\setminus\partial c_{xy}$.
\end{enumerate}
We show that a.s.\ $M_K=\lambda_\BS$ by showing that a.s.\ for any $I:=I_p$, $M_{K|I}=\lambda_I$.
\begin{enumerate}
	\item[(a)] Suppose $I^\circ=\overline{w}\cap\BS$ for $w\in\at(M_\nu)$. For all $n\leq i$, we defined $\kappa_i^n$ such that
	\begin{equation}
		\kappa^n_{i|I}=C^{i,n}_I\lambda_I
	\end{equation}
	where $C^{i,n}_I$ is a real-valued random-variable. Taking the weak limit of $(\kappa_i^{\varphi_i(n)})_n$ when $n$ goes to infinity,
	\begin{equation}
		\kappa_{i|I}=C^i_I\lambda_I
	\end{equation}
	a.s.\ for some real-valued random-variable $C^i_I$. Therefore, a.s.\ for all $m\in\N$,
	\begin{equation}
		M_{K_m|I}=\left(\frac{1}{m}\sum_{i=1}^mC^i_I\right)\lambda_I.
	\end{equation}
	Taking the weak limit of $(K_{\varphi(m)})_m$ when $m$ goes to infinity,
	\begin{equation}
		M_{K|I}=C_I\lambda_I
	\end{equation}
	a.s.\ for some real-valued random-variable $C_I$. Noticing that $I$ belongs to the $\sigma$-algebra generated by $\{\comp_{\overline{x}}(\overline{y})\cap\BS:x,y\in V_C\}$, we have $M_K(I)=\lambda_I(I)$ so that $C_I=1$ a.s.
	\item[(b)] Suppose that $I^\circ=(b\cap\BS)^\circ$ for some $b\in\blacksquare(C)$. We will use here the assumption that for the corresponding $(v,w)\in\seg(T,c,\nu)$, $M_\nu$ restricted to $(v,w)$ is the Lebesgue measure.
	\begin{itemize}
		\item If $b\cap\BS$ is the union of two disjoint intervals (one of which is a singleton), then the map that associates a point in $(v,w)$ to the endpoint of the corresponding segment in $b$ is linear. Therefore, with the assumption on $\nu$, $M_{K|I}=\lambda_I$.
		\item If not, let $x,x'\in\BS$ such that $b\cap\BS=[x,x']$, $y=y'=\frac{x+x'}{2}$ (see Figure~\ref{f:partitionfilledareas}). In this case, $(v,w)$ is adjacent to a leaf and we can assume w.l.o.g.\ that $w\in\lf(T,c)$. The point $u=(1-t)v+tw$, $t\in(0,1)$ corresponds to the segment with endpoints $\widetilde{u_1}=(1-t)x+ty$ and $\widetilde{u_2}=(1-t)x'+ty'$ (see Figure~\ref{f:typesofboundaries}). Therefore, by construction and assumption on $\nu$, $M_{K|(x,y)}$ is proportional to $\lambda_{(x,y)}$ and $M_{K}(x,y)=\frac{M_{K}(x,x')}{2}=\frac{\lambda_\BS(x,x')}{2}=\lambda_\BS(x,y)$ because $[x,x']$ is the intersection of a component with $\BS$. Thus $M_{K|(x,y)}=\lambda_{(x,y)}$ and since it also holds for $(y',x')$, $M_{K|I}=\lambda_I$.
	\end{itemize}
	\item[(c)] Finally, we assume that $I\subseteq C$ and for all distinct $x,y\in I$, there exists $c_{xy}\in\Delta(C)$ such that $x,y$ belong to different connected components of $\D\setminus\partial c_{xy}$. Note that in this case, $[x,y]\cap c_{xy}\neq\emptyset$. In other words, the set of endpoints of (boundaries of) triangles in $\Delta(C)$ is dense in $I$. Let $J=[a,b]\subseteq I$. There exist two sequences $(a_n)_n$ and $(b_n)_n$ in $I$ such that for all $n$, $a_n\in\partial_\D v$ and $b_n\in\partial_\D w$ for some $v,w\in\Delta(C)$ and $a_n\uparrow a$ and $b_n\downarrow b$. Since $a_n\in\partial_\D v$ and $b_n\in\partial_\D w$, $[a_n,b_n]$ is the union of countably many disjoint components of $C$ intersected with $\BS$, so that $M_K([a_n,b_n])=\lambda_\BS([a_n,b_n])$. Therefore, $M_K(J)=\lim_{n\to\infty}M_K([a_n,b_n])=\lambda_\BS(J)$. Since it holds for all interval $J\subseteq I$, $M_{K|I}=\lambda_I$.
\end{enumerate}

We have shown that a.s.\ the random two-level measure $K$ together with the sub-triangulation $C$ is such that $\tau(C,K)=(T,c,\nu)$ and $M_K=\lambda_\BS$. Therefore, by taking a realization of $K$ such that it holds, we have shown surjectivity of $\tau$.
\end{proof}

\section{Topologies on the subspace of binary algebraic two-level measure trees}
\label{S:topologies}

\josue{Recall that in Section~\ref{s:m2mspaces} we have equiped $\T^{(2)}$ with the Gromov-weak topology using a metric representation that comes from the branch point distribution.} Here we introduce on the space of binary algebraic two-level measure trees another notion of convergence called \emph{two-level sample shape convergence}. It exploits the idea of the Gromov-weak topology to sample finite sub-spaces and then require these to converge in distribution. Whereas one samples metric sub-spaces in the Gromov-weak topology, we now consider subtrees of a2m trees as combinatorial objects, which will make it useful to show convergence of tree-valued Markov chains in the future. We show that this topology is equivalent to the two-level bpdd-Gromov-weak topology on $\T_2^{(2)}$, and that both topologies are compact.

\subsection{Two-level sample shape convergence}

Obviously the two-level sample shape convergence is similar to the notion of sample shape convergence defined on the space of algebraic measure trees (see \cite{LoehrWinter2021}, that we follow closely for this generalization to the two-level case). A sequence of trees converges to a limit tree if all random tree shapes spanned by finite samples converge weakly to the corresponding limit shapes. On $\T_2^{(2)}$, there is a two-level sampling procedure: we first sample a finite number of measures on the tree from the two-level measure and then sample a finite number of points according to each sampled measure. Therefore, the notion of tree shape that we define keeps track of this two-level sampling by using double indices.

For $m\in\N$ and $\vn\in\N^m$, we denote by
\begin{equation}
	\N_{m,\vn}=\{(i,j):1\leq i \leq m,1\leq j\leq n_i\}
\end{equation}
the set of indices. The sampled subtrees from an a2m tree will be cladograms, which are binary finite trees with labelled leaves. Since the two-level measure on the tree may have atoms on leaves, a given leaf may be sampled several times. We therefore need to allow multi-labels in cladograms.

\begin{definition}[$(m,\vn)$-cladogram]
For $m\in\N$ and $\vn\in\N^m$, an \emph{$(m,\vn)$-labelled cladogram} is a binary, finite algebraic tree $(C, c)$ consisting only of leaves and branch points together with a surjective labelling map $\zeta\colon \N_{m,\vn} \to \mathrm{lf}(C)$. An \emph{$(m,\vn)$-cladogram} $(C, c, \zeta)$ is an $(m,\vn)$-labelled cladogram such that $\zeta$ is also injective.

We call two $(m,\vn)$-labelled cladograms $(C_1, c_1, \zeta_1)$ and $(C_2, c_2, \zeta_2)$ \emph{isomorphic} if there exists a tree isomorphism $\phi$ from $(C_1, c_1)$ onto $(C_2, c_2)$ such that $\zeta_2 = \phi \circ\zeta_1$. We then write
\begin{equation}
	\overline{\FC}_{m,\vn} := \{ \text{isomorphism classes of }(m,\vn)\text{-labelled cladograms}\}
\end{equation}
and
\begin{equation}
\FC_{m,\vn} := \{(C, c, \zeta) \in \overline{\FC}_{m,\vn} : \zeta \text{ injective}\}.
\end{equation}
\end{definition}

The shape function encodes as cladograms the shape of the subtree spanned by finite samples of points (see Figure~\ref{f:TLshapefunction}).

\begin{definition}[Shape function]
Let $(T, c)$ be a binary algebraic tree, $m\in\N$, $\vn\in\N^m$, and $u_{ij}\in T \setminus \mathrm{br}(T)$ for $(i,j)\in\N_{m, \vn}$. Then there exists a unique (up to isomorphism) $(m,n)$-labelled cladogram
\begin{equation}
	\Fs_{(T,c)}(\mat{u}) = (C, c_C , \zeta)
\end{equation}
with $\mathrm{lf}(C) = \{u_{ij}\}_{(i,j)\in\N_{m,\vn}}$ and $\zeta(i,j) = u_{ij}$, such that the identity on $\mathrm{lf}(C)$ extends to a tree homomorphism $\pi$ from $C$ onto $c\left((\{u_{ij}\}_{(i,j)\in\N_{m,\vn}})^3\right)$, i.e.\ for all $(i_1,j_1),(i_2,j_2),(i_3,j_3)\in\N_{m,\vn}$,
\begin{equation}
	\pi(c_C (u_{i_1j_1}, u_{i_2j_2}, u_{i_3j_3})) = c(u_{i_1j_1}, u_{i_2j_2}, u_{i_3j_3}).
\end{equation}
We will refer to $\Fs_{(T,c)} (\mat{u}) \in \overline{\FC}_m$ as the \emph{shape} of $\mat{u}$ in $(T, c)$.
\end{definition}

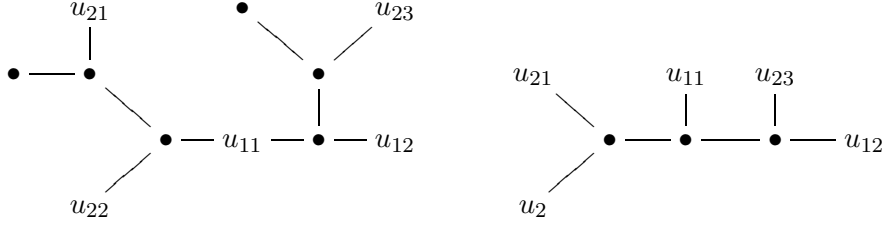
\begin{figure}[t]
\[
\xymatrix@=1pc{
&u_{21}\ar@{-}[d]&&{\bullet}&&u_{23}   &   &&&\\
{\bullet}\ar@{-}[r]&{\bullet}\ar@{-}[dr]&&&{\bullet}\ar@{-}[ul]\ar@{-}[ur]&   &&   u_{21}\ar@{-}[dr]&&u_{11}\ar@{-}[d]&u_{23}\\
&&{\bullet}\ar@{-}[r]&u_{11}\ar@{-}[r]&{\bullet}\ar@{-}[r]\ar@{-}[u]&u_{12}   &&   &{\bullet}\ar@{-}[r]&{\bullet}\ar@{-}[r]&{\bullet}\ar@{-}[r]\ar@{-}[u]&u_{12}\\
&u_{22}\ar@{-}[ur]&&&&   &&   u_2\ar@{-}[ur]&&&}\]
\caption{A tree $T$ and the shape $\Fs_{(T,c)}(u_{11},u_{12},u_{21},u_{22},u_{23})$.}
\label{f:TLshapefunction}
\end{figure}

We introduce a notion of convergence on $\T_2^{(2)}$ based on the weak convergence of random tree shapes spanned by finite samples. This topology is defined as the topology induced by the two-level shape polynomials, which are test functions evaluating the tree shape distributions.

\begin{definition}[Two-level shape polynomials]
A \emph{two-level shape polynomial} is a function $\Phi^{m,\vn,\varphi} \colon \T^{(2)}_2 \to \R$ of the form
\begin{equation}
	\Phi^{m,\vn,\varphi}(\chi):=\int_{(\CM_1(T))^m}\nu^{\otimes m}(\du{\mu})\int_{T^{|\vn|}}\bigotimes_{i=1}^m\mu_i(\du{u_i})\varphi\left(\Fs_{(T,c)} (\mat{u})\right),
\end{equation}
where $\chi=(T,c,\nu)$, $m\in\N$, $\vn\in\N^m$ and $\varphi\colon\FC_{m,\vn}\to\R$. We write $\Pi_\Fs^{(2)}$ for the set of all two-level shape polynomials.
\end{definition}

\begin{definition}[Two-level sample shape topology]
The \emph{two-level sample shape topology} on $\T_2^{(2)}$ is defined as the initial topology induced by $\Pi_\Fs^{(2)}$.
\end{definition}

\begin{remark}[Tree shape distribution]\label{r:treeshapedistribution}
Fix $m\in\N$ and $\vn\in\N^m$. For all $\chi=(T,c,\nu)$, we define the \emph{$(m,\vn)$-tree shape distribution} $\mathfrak{S}_{m,\vn}(\chi)$ as the probability measure on $\FC_{m,\vn}$ such that for all $\varphi\colon\FC_{m,\vn}\to\R$,
\begin{equation}
	\int_{\FC_{m,\vn}}\mathrm{d}\mathfrak{S}_{m,\vn}(\chi)~\varphi=\int_{\CM_1(T)}\nu^{\otimes m}(\du{\mu})\int_{T^{|\vn|}}\bigotimes_{i=1}^m\mu_i(\du{u_i})\varphi\left(\Fs_{(T,c)} (\mat{u})\right).
\end{equation}
Then the two-level sample shape topology is induced by the set of functions
\josue{\begin{equation}
	\mathcal{F}_\Fs:=\{\mathfrak{S}_{m,\vn}:m\in\N,\vn\in\N\}.
\end{equation}}
\end{remark}

The following result gives that on $\T_2^{(2)}$, two-level sample shape convergence implies two-level bpdd-Gromov-weak convergence.

\begin{proposition}\label{p:strongertopology}
On $\T_2^{(2)}$, the two-level sample shape topology is stronger than the two-level bpdd-Gromov-weak topology.
\end{proposition}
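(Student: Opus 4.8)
To prove that the two-level sample shape topology is finer than the two-level bpdd-Gromov-weak topology, the plan is to verify that each test function generating the latter is continuous for the former; since the two-level bpdd-Gromov-weak topology is by definition the initial topology induced on $\T_2^{(2)}$ by the functions $\Phi\circ\iota$ with $\Phi$ a two-level distance polynomial, this suffices. Fixing such a $\Phi$ and writing $\iota(\chi)=(T_\xi,r_\xi,\nu_\xi)$ with branch point distribution $\xi=c_*M_\nu^{\otimes3}$, the composition reads
\[
	(\Phi\circ\iota)(\chi)=\int\nu^{\otimes m}(\du\mu)\int\bigotimes_{i=1}^m\mu_i(\du{u_i})\,\varphi\big((r_\xi(u_{ij},u_{i'j'}))_{(i,j),(i',j')}\big),
\]
with $\varphi\in\CC_b(\R^{|\vn|^2})$. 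Since $\xi$ is a probability measure, every entry $r_\xi(u_{ij},u_{i'j'})$ lies in $[0,1]$, so the Weierstrass theorem lets me approximate $\varphi$ uniformly on $[0,1]^{|\vn|^2}$ by polynomials in its arguments; as the sampling measures are probabilities, this yields uniform approximation of $\Phi\circ\iota$ on $\T_2^{(2)}$. Because a uniform limit of functions continuous for a fixed topology is again continuous, it is enough to treat a single monomial, i.e.\ a function
\[
	\chi\longmapsto\int\nu^{\otimes m}(\du\mu)\int\bigotimes_{i=1}^m\mu_i(\du{u_i})\,\prod_{\ell=1}^L r_\xi(u_{a_\ell},u_{b_\ell}),
\]
and to show it is a two-level shape polynomial.

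The core step is a sampling representation of the branch point distribution distance. From $\xi=c_*M_\nu^{\otimes3}$ and $r_\xi(x,y)=\xi([x,y])-\tfrac12\xi\{x\}-\tfrac12\xi\{y\}$ I read off, for every $x,y$,
\[
	r_\xi(x,y)=\int M_\nu^{\otimes3}(\du v)\,h(x,y,v_1,v_2,v_3),\qquad h:=\ind\{c(v)\in(x,y)\}+\tfrac12\ind\{c(v)=x\}+\tfrac12\ind\{c(v)=y\},
\]
where $c(v):=c(v_1,v_2,v_3)$. The claim I would establish is that, for $(\mu_i\otimes\mu_{i'}\otimes M_\nu^{\otimes3})$-almost every configuration, $h$ depends only on the labelled cladogram $\Fs_{(T,c)}(x,y,v_1,v_2,v_3)$: it equals $1$ when the internal vertex $c(v)$ lies strictly on the path between the leaves labelled $x$ and $y$, equals $\tfrac12$ when $c(v)$ coincides with $x$ or with $y$ (which, since sampled points avoid branch points, forces at least two of the $v_i$ to fall on the same leaf and is thus visible through the leaf multiplicities recorded in $\overline{\FC}$), and equals $0$ otherwise. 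Writing $\hat g$ for this function on labelled cladograms, one has $h=\hat g\circ\Fs_{(T,c)}$ almost everywhere.

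I then realise the three auxiliary $M_\nu$-samples inside the two-level sampling scheme, using $M_\nu=\int\nu(\du\mu)\mu$: drawing a single point from a fresh measure sampled from $\nu$ produces an $M_\nu$-distributed point, and independent fresh measures produce independent $M_\nu$-samples. For the $L$-fold monomial I therefore adjoin $3L$ new measure slots, each sampled from $\nu$ and contributing exactly one point, so that the $\ell$-th triple is an independent $M_\nu^{\otimes3}$-sample. Fubini together with the representation of the previous step turns the monomial into
\[
	\int\nu^{\otimes(m+3L)}(\du\mu)\int\bigotimes_{i=1}^{m+3L}\mu_i(\du{u_i})\,\psi\big(\Fs_{(T,c)}(\mat u)\big),\qquad \psi:=\prod_{\ell=1}^L\hat g,
\]
where $\psi$ is read off the full sample shape (the total shape determines every five-point sub-shape). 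With $m'=m+3L$ and $\vn'=(n_1,\dots,n_m,1,\dots,1)$ this is precisely the two-level shape polynomial $\Phi^{m',\vn',\psi}$, which is continuous for the two-level sample shape topology by construction. Combined with the Weierstrass step of the first paragraph, this shows that $\Phi\circ\iota$ is sample-shape continuous and proves the proposition.

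I expect the main difficulty to be the almost-everywhere shape-determinacy asserted in the second paragraph, rather than the bookkeeping around it. Concretely, one must verify that the coincidence contributions $\{c(v)=x\}$ and $\{c(v)=y\}$ are correctly captured by the half-weights and by leaf multiplicities in $\overline{\FC}$ — these events have positive probability precisely because $\mu_i$ may have atoms at leaves — and that the remaining exceptional configurations, where a sampled point coincides with a branch point of $(T,c)$ and $\Fs_{(T,c)}$ is not directly defined, form a null set under the sampling measures. This is where the defining restriction of $\T_2^{(2)}$, that atoms of every $\mu\in\supp(\nu)$ sit only on leaves, enters, and the computation parallels the one-level argument of \cite{LoehrWinter2021}. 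Everything else — the polynomial approximation, the use of Fubini, and the identification of the adjoined slots as a genuine two-level shape polynomial — is routine.
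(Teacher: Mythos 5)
Your proposal is correct in outline but reaches the conclusion by a genuinely different route than the paper, so let me compare. Both arguments reduce the claim to showing that each generating test function $\Phi\circ\iota$ of the two-level bpdd-Gromov-weak topology is a \emph{uniform} limit over $\T_2^{(2)}$ of two-level shape polynomials, and both realise the auxiliary $M_\nu$-samples needed for the branch point distribution by adjoining fresh measure slots to the two-level sampling scheme; both also rest on the same shape-determinacy fact, namely that whether $c(v_1,v_2,v_3)$ lies on $[x,y]$ (and whether it coincides with $x$ or $y$) is read off the labelled cladogram of the five points. The difference is where the approximation happens. The paper keeps $\varphi$ general but Lipschitz and approximates the branch point distribution $\xi=c_*M_\nu^{\otimes 3}$ by the empirical measure $\lambda_{p,\mau}$ of $p$ sampled branch points; the uniformity over $\T_2^{(2)}$ then comes from the Vapnik--Chervonenkis bound of \cite[Lemma~A.4]{LoehrWinter2021} applied to the class of intervals, which gives a tree-independent rate $O(p^{-1/2})$. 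You instead approximate $\varphi$ uniformly on $[0,1]^{|\vn|^2}$ by polynomials (Weierstrass) and then represent each monomial \emph{exactly} as a shape polynomial by linearising each factor $r_\xi(x,y)$ with three fresh $M_\nu$-samples and applying Fubini. Your version buys a cleaner argument that dispenses entirely with the empirical-process estimate; the paper's version buys a quantitative uniform rate for arbitrary Lipschitz $\varphi$ without passing through monomials. Both are legitimate, and the uniformity needed for the ``uniform limit of sample-shape-continuous functions'' step is tree-independent in either case ($\|\varphi-P\|_\infty$ for you, the VC constant for the paper).

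One point in your kernel needs patching. The identity $r_\xi(x,y)=\int M_\nu^{\otimes 3}(\du{v})\,h(x,y,v)$ with $h=\ind\{c(v)\in(x,y)\}+\tfrac12\ind\{c(v)=x\}+\tfrac12\ind\{c(v)=y\}$ fails on the diagonal: if $x=y$ is an atom leaf with $M_\nu\{x\}=a>0$, then $r_\xi(x,x)=0$ while your integral evaluates to $\xi\{x\}=3a^2-2a^3>0$, and the coincidence $u_{a_\ell}=u_{b_\ell}$ occurs with positive probability precisely because of such atoms. The fix is immediate — multiply $h$ by $\ind\{x\neq y\}$, which is itself readable from the labelled cladogram since label multiplicities are recorded in $\overline{\FC}_{m,\vn}$ — but it is not the same issue as the coincidences $\{c(v)=x\}$ that you flag, so it should be stated explicitly. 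With that correction, and the verification (which you rightly identify as the real content, and which parallels \cite{LoehrWinter2021}) that $h$ is a.s.\ a function of the five-point labelled shape, your argument goes through.
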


\begin{proof}
By definition, the two-level bpdd-Gromov-weak topology is induced by the set $\Pi_\iota^{(2)}$ of polynomials of the form
\begin{equation}
	\Phi(\chi):=\int\nu^{\otimes m}(\du{\mu})\int\bigotimes_{i=1}^m\mu_i(\du{u_i})\varphi\left(\big(r_\lambda(u_{ij},u_{i'j'})_{(i,j),(i',j')}\big)\right),
\end{equation}
where $\chi=(T,c,\nu)$, $\lambda:=c_{*}M_\nu^{\otimes 3}$, $m\in\N$, $\vn\in\N^m$ and $\varphi\in\CC_b(\R^{|\vn|^2})$ (see Definition~\ref{d:TLgromovweak} and Definition~\ref{d:TLbpdd}). Now, the set of $\phi\in\CC_b(\R^{|\vn|^2})$ that are Lipschitz continuous is convergence determining for probability measure on $\R^{|\vn|^2}$. Hence, the subset of $\Psi\in\Pi_\iota^{(2)}$ with
\begin{equation}
	\Psi(T,c,\nu)=\int\nu^{\otimes m}(\du{\mu})\int\bigotimes_{k=1}^m\mu_k^{\otimes n_k}(\du{u_k})\phi\left(\big(r_\lambda(u_{ij},u_{i'j'})_{(i,j),(i',j')}\big)\right)
\end{equation}
for some $m\in\N$, $\vn\in\N^m$ and a Lipschitz continuous function $\phi\in\CC_b(\R^{|\vn|^2})$ also induces the two-level bpdd-Gromov-weak topology. Therefore, it is enough to show that such a $\Psi$ is continuous on $\T^{(2)}_2$ with respect to the sample shape topology. To do this, we show that the restriction of $\Psi$ to $\T^{(2)}_2$ can be uniformly approximated by polynomials in $\Pi_\Fs^{(2)}$.

For $p\in\N$ with $3p\geq m$, we define
\begin{equation}
	\Phi_p(T,c,\nu)=\int\nu^{\otimes 3p}(\du{\mu})\int\bigotimes_{k=1}^m\mu_k^{\otimes n_k}(\du{u_k})\bigotimes_{k=m+1}^{3p}\mu_k(\mathrm{d}u_{k1})\phi\left(\big(r_{\lambda_{p,\mau}}(u_{ij},u_{i'j'})_{(i,j),(i',j')}\big)\right)
\end{equation}
where
\begin{equation}
	\lambda_{p,\mau}:=\frac{1}{p}\sum_{l=0}^p\delta_{c(u_{3l+1,1},u_{3l+2,1},u_{3l+3,1})}.
\end{equation}
Note that whether or not $c(u_{3l+1,1},u_{3l+2,1},u_{3l+3,1})$ lies on $[u_{(i,j)}, u_{(i',j')}]$, for some $l\in\{0, ..., p-1\}$ and $(i,j),(i',j')\in\N_{m,\vn}$ only depends on the shape $\Fs_{(T,c)}(\mau)$ and thus the restriction of $\Phi_p$ to $\T^{(2)}_2$ belongs to $\Pi_\Fs^{(2)}$.

To show that $\Phi_p$ approximates $\Psi$, we uniformly bound the distance of the empirical branch point distribution to the branch point distribution. Recall that for $x, y \in T$,
\begin{equation}
	r_\lambda(x,y) := \lambda([x,y]) -\frac{1}{2} \lambda(\{x\}) -\frac{1}{2}\lambda(\{y\}).
\end{equation}
Thus, denoting by $L$ the Lipschitz constant of $\phi$ w.r.t.\ the $l_\infty$-norm on $\R^{|\vn|^2}$,
\begin{equation}
\begin{aligned}
	||\Psi-\Phi_p||_\infty & \leq \sup_{(T,c,\nu)\in\T_2^{(2)}}\int\nu^{\otimes 3p}(\du{\mu})\int\bigotimes_{k=1}^m\mu_k^{\otimes n_k}(\du{u_k})\bigotimes_{k=m+1}^{3p}\mu_k(\mathrm{d}u_{k1})L.2\sup_{I\in\CI_T}|\lambda(I)-\lambda_{p,\mau}(I)|\\
	& \leq 2L\sup_{(T,c,\nu)\in\T_2^{(2)}}\int(M_\nu)^{\otimes 3p}(\mathrm{d}\mau)\sup_{I\in\CI_T}|\lambda(I)-\lambda_{p,\mau}(I)|,
\end{aligned}
\end{equation}
where $\CI_T:=\{[x,y]:x,y\in T\}$ is the collection of all intervals of the algebraic tree $T$.

Then \cite[Lemma~A.4]{LoehrWinter2021} gives the following estimates of the rate of convergence in the approximation of the branch point distribution by empirical distribution:
\begin{equation}
	\int(M_\nu)^{\otimes 3p}(\mathrm{d}\mau)\sup_{I\in\CI_T}|\lambda(I)-\lambda_{p,\mau}(I)|\leq 96\sqrt{\frac{\mathrm{dim_{\mathrm{VC}}(\CI_T)}}{p}}=96\sqrt{\frac{2}{p}},
\end{equation}
where $dim_{\mathrm{VC}}(\CI_T)$ is the Vapnik-Chervonenkis dimension of $\CI_T$ which can easily be shown to be 2 (see \cite[Example~A.2]{LoehrWinter2021} for more details). This concludes the proof.
\end{proof}

\begin{corollary}[Metrizability]\label{c:metrizability}
The two-level sample shape topology is metrizable on $\T_2^{(2)}$.
\end{corollary}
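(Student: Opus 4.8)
The plan is to realise the two-level sample shape topology as the initial topology of a \emph{countable}, \emph{point-separating} family of maps into compact metric spaces, and then invoke the standard embedding criterion for metrizability.

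First I would record, using Remark~\ref{r:treeshapedistribution}, that the two-level sample shape topology on $\T_2^{(2)}$ is the initial topology induced by the family $\mathcal{F}_\Fs=\{\mathfrak{S}_{m,\vn}:m\in\N,\ \vn\in\N^m\}$, where $\mathfrak{S}_{m,\vn}(\chi)\in\CM_1(\FC_{m,\vn})$ is the $(m,\vn)$-tree shape distribution. The index set $\bigcup_{m\in\N}\N^m$ is countable. Moreover, for fixed $m$ and $\vn$ the set $\FC_{m,\vn}$ of isomorphism classes of $(m,\vn)$-cladograms is \emph{finite} (there are only finitely many binary tree shapes on $|\vn|$ leaves and finitely many injective labellings), so $\CM_1(\FC_{m,\vn})$ is a finite-dimensional simplex, hence a compact metric space. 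Thus $\mathcal{F}_\Fs$ is a countable family of maps from $\T_2^{(2)}$ into compact metric spaces, and the sample shape topology is precisely the coarsest topology making all of them continuous.

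Next I would argue that the family separates points. By Proposition~\ref{p:strongertopology} the sample shape topology is finer (stronger) than the two-level bpdd-Gromov-weak topology, which by Corollary~\ref{p:separablemetrizable} is metrizable and in particular Hausdorff. A topology finer than a Hausdorff topology is again Hausdorff, so the sample shape topology is Hausdorff. Since it is the initial topology of $\mathcal{F}_\Fs$ into Hausdorff targets, Hausdorffness forces the family to separate points: whenever $\chi\neq\chi'$ in $\T_2^{(2)}$ there is some $(m,\vn)$ with $\mathfrak{S}_{m,\vn}(\chi)\neq\mathfrak{S}_{m,\vn}(\chi')$.

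Finally I would conclude by the embedding argument. The diagonal map
\[
	F:=\big(\mathfrak{S}_{m,\vn}\big)_{m\in\N,\,\vn\in\N^m}\colon \T_2^{(2)}\longrightarrow\prod_{m,\vn}\CM_1(\FC_{m,\vn})
\]
induces exactly the sample shape topology (initial topology with respect to a family equals that with respect to the single map into the product), and it is injective by the preceding step. The target is a countable product of compact metric spaces, hence metrizable (indeed compact metrizable), and $F$ is therefore a homeomorphism onto its image; as a subspace of a metrizable space, this image is metrizable, and pulling back a compatible metric along $F$ yields a metric inducing the two-level sample shape topology on $\T_2^{(2)}$. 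Every step is soft point-set topology once the finiteness of $\FC_{m,\vn}$ is observed, so there is no real obstacle; the only point that genuinely needs justification is that $\mathcal{F}_\Fs$ separates points, and this is handed to us for free by the comparison in Proposition~\ref{p:strongertopology} with the already-metrizable bpdd-Gromov-weak topology.
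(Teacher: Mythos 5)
Your proposal is correct and follows essentially the same route as the paper: both deduce Hausdorffness from Proposition~\ref{p:strongertopology} together with the metrizability of the two-level bpdd-Gromov-weak topology, and both exploit that the sample shape topology is the initial topology of the countable family $\mathcal{F}_\Fs$ with metrizable targets. The paper simply writes the resulting metric explicitly as a weighted sum of Prokhorov distances (which is precisely the product metric pulled back along your diagonal map $F$), whereas you phrase the same argument as an embedding into a countable product of compact metric spaces.
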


\begin{proof}
By Proposition~\ref{p:strongertopology}, the two-level sample shape topology is stronger than the two-level bpdd-Gromov-weak topology, which is Hausdorff by Proposition~\ref{p:separablemetrizable}. Hence the two-level sample shape topology is also Hausdorff. Moreover, by Remark~\ref{r:treeshapedistribution}, it is induced by the set $\mathcal{F}_\Fs$, which is a countable family of functions with values in metrizable spaces. Thus we can define a pseudo-metric on $\T_2^{(2)}$ that induces the two-level sample shape topology as follows: for $\chi,\chi'\in\T_2^{(2)}$,
\begin{equation}
	d_\Fs(\chi,\chi'):=\sum_{m\in\N}\frac{1}{2^m}\sum_{\vn\in\N^m}\frac{1}{2^{|\vn|}}\min\Big(d_{\mathrm{Pr}}\big(\mathfrak{S}_{m,\vn}(\chi),\mathfrak{S}_{m,\vn}(\chi')\big),1\Big),
\end{equation}
where $d_{\mathrm{Pr}}$ denotes the Prokhorov distance. Therefore, the two-level sample shape topology is Hausdorff and pseudo-metrizable. In particular, if $d_\Fs(\chi,\chi')=0$ then every open set that contains $\chi$ also contains $\chi'$, so that $d_\Fs$ is actually a metric.
\end{proof}

\subsection{Equivalence and compactness of topologies}

In this section, we first show that the coding map $\tau$ is continuous when $\T_2^{(2)}$ is equipped with the two-level sample shape topology. This implies that it is a compact topology. Finally, we prove that the two-level sample shape convergence and the two-level bpdd Gromov-weak convergence are equivalent on $\T_2^{(2)}$. 

Recall from \eqref{e:TLsubtri} the space $\mathfrak{D}$ of pairs $(C,K)\in\CT\times\CM_1(\CM_1(\BS))$ such that $M_K=\lambda_\BS$.

\begin{proposition}\label{p:continuitytau}
Let $\CT$ be equipped with the Hausdorff metric topology, $\CM_1(\CM_1(\BS))$ with the weak topology, $\CT\times\CM_1(\CM_1(\BS))$ with the product topology and $\T_2^{(2)}$ with the two-level sample shape topology. Then the coding map $\tau:\mathfrak{D}\to\T_2^{(2)}$ is continuous.
\end{proposition}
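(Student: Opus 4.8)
The plan is to exploit that $\T_2^{(2)}$ carries the initial topology induced by the two-level shape polynomials $\Pi_\Fs^{(2)}$. A map into a space with an initial topology is continuous iff its composition with each inducing function is; hence it suffices to prove that $\Gamma\mapsto\Phi^{m,\vn,\varphi}(\tau(\Gamma))$ is continuous on $\mathfrak{D}$ for every $m\in\N$, $\vn\in\N^m$ and $\varphi\colon\FC_{m,\vn}\to\R$. By Remark~\ref{r:treeshapedistribution} this amounts to the continuity of $\Gamma\mapsto\mathfrak{S}_{m,\vn}(\tau(\Gamma))\in\CM_1(\FC_{m,\vn})$; since $\FC_{m,\vn}$ is finite, weak convergence there is just convergence of the probability assigned to each cladogram.

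First I would transport the shape polynomial onto the circle. Writing $(T,c,\nu)=\tau(C,K)$, recall from the construction in the proof of Theorem~\ref{t:codingmap}(i) that $\nu$ is the pushforward of $K$ under $\kappa\mapsto\mu^\kappa$, and that by \eqref{e:mu0kappa} the mass $\mu^\kappa(\CS_x(y))$ equals the $\kappa$-mass of $\comp_x(y)\cap\BS$. Since $M_\nu(\br(T,c))=0$, for $\nu$-a.e.\ $\mu$ the sampled points lie in $T\setminus\br(T,c)$, and because $M_K=\lambda_\BS$ is non-atomic, for $K$-a.e.\ $\kappa$ the circle-to-tree correspondence (each generic circle point lying in a single component region, mapped to the corresponding leaf, as in Step~1--2 of the surjectivity proof) is defined $\kappa$-a.e. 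Under this correspondence the branch point of three tree points is the separating triangle of the three associated circle points (Lemma~\ref{l:inducedbpmap} extended to all of $T$ via the component structure), so the spanned cladograms coincide. This yields the reformulation
\begin{equation}
	\Phi^{m,\vn,\varphi}(\tau(C,K))=\int K^{\otimes m}(\du{\kappa})\int\bigotimes_{i=1}^m\kappa_i(\du{w_i})\,\varphi\big(\Fs^C(\mat{w})\big),
\end{equation}
where $\Fs^C(\mat{w})\in\FC_{m,\vn}$ denotes the cladogram spanned by the circle points $\mat{w}$ read off from the separating-triangle structure (Tri2)' of $C$. The point of this step is that the shape now depends only on $C$ and the circle points, the measures entering solely through their law.

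The geometric heart, and what I expect to be the main obstacle, is a \emph{stability} statement for this circle shape function: if $C_n\to C$ in the Hausdorff metric and $\mat{w}^{(n)}\to\mat{w}$ in $\BS^{|\vn|}$, where the coordinates of $\mat{w}$ are pairwise distinct and none lies on a boundary $\partial t$ of a triangle $t\in\Delta(C)$, then $\Fs^{C_n}(\mat{w}^{(n)})=\Fs^C(\mat{w})$ for all large $n$. The difficulty is that Hausdorff convergence of sub-triangulations is a weak notion under which triangles may degenerate or appear and disappear; one must show that the finitely many separating triangles relevant to a \emph{generic} configuration $\mat{w}$ persist and converge, which is precisely the one-level analysis carried out in \cite{LoehrWinter2021} and which I would invoke (via Proposition~\ref{p:characsubtriangulation} and the closedness of $\CT$ in $\CF(\D)$). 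The configurations excluded above — coincidences of coordinates, or a coordinate meeting a triangle vertex — form a $\lambda_\BS^{\otimes|\vn|}$-null set, since the triangle vertices are countable ($\Delta(C)$ being countable) and $\lambda_\BS$ is non-atomic; as $M_K=\lambda_\BS$, this set is also $\kappa^{\otimes|\vn|}$-null for $K$-a.e.\ $\kappa$.

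Finally I would pass to the limit with a double application of the generalized (extended) continuous mapping theorem. Given $(C_n,K_n)\to(C,K)$ we have $K_n^{\otimes m}\to K^{\otimes m}$ weakly, and $\vec{\kappa}\mapsto\bigotimes_i\kappa_i$ is weakly continuous. Setting $G_n(\vec{\kappa}):=\int\bigotimes_i\kappa_i(\du{w_i})\,\varphi(\Fs^{C_n}(\mat{w}))$ and $G$ analogously with $C$, the stability statement together with the null-set control shows that $\varphi\circ\Fs^{C_n}$ is uniformly bounded by $\|\varphi\|_\infty$ and converges to $\varphi\circ\Fs^{C}$ off a $\bigotimes_i\kappa_i$-null set whenever $\vec{\kappa}$ charges no triangle vertex; the generalized continuous mapping theorem then gives $G_n(\vec{\kappa}_n)\to G(\vec{\kappa})$ for $\vec{\kappa}_n\to\vec{\kappa}$ ranging over a $K^{\otimes m}$-full set. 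A second application, now to $K_n^{\otimes m}\to K^{\otimes m}$ with uniformly bounded integrands $G_n\to G$ in this sense, yields $\int G_n\,\mathrm{d}K_n^{\otimes m}\to\int G\,\mathrm{d}K^{\otimes m}$, which is the desired continuity of $\Phi^{m,\vn,\varphi}\circ\tau$ and completes the argument.
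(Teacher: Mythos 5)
Your overall skeleton --- reducing to the continuity of $\mathfrak{S}_{m,\vn}\circ\tau$, transporting the shape polynomial to the circle via the correspondence between subtree components and components of the sub-triangulation, and isolating a Hausdorff-stability statement for the separating structure of $C$ --- is the same as the paper's. The paper, however, implements the limit passage by an explicit two-stage coupling (Lemma~\ref{l:coupledsampling}, applied first to $K,K'$ and then to the conditional laws $\kappa_i,\kappa_i'$), together with an a priori choice of finitely many triangles $v_1,\dots,v_N$ which, with probability $1-\epsilon$, are the only ones relevant to the sampled shape. Your replacement of that coupling by a double application of the extended continuous mapping theorem would be a legitimate repackaging \emph{if} the exceptional set you excise were really negligible.

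It is not, and this is the genuine gap. You dismiss ``coincidences of coordinates'' as a $\lambda_\BS^{\otimes|\vn|}$-null set and conclude that it is $\bigotimes_i\kappa_i^{\otimes n_i}$-null for $K^{\otimes m}$-a.e.\ $\vec{\kappa}$. But the sampling measure $\bigotimes_i\kappa_i^{\otimes n_i}$ need not be absolutely continuous with respect to $\lambda_\BS^{\otimes|\vn|}$: the condition $M_K=\lambda_\BS$ constrains only the intensity, and the individual $\kappa_i$ may be purely atomic. For instance, $K=\mathrm{law}(\delta_V)$ with $V$ uniform on $\BS$ belongs to $\mathfrak{D}$, and then two points sampled from the same $\kappa_i$ coincide with probability one. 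At such configurations your stability statement genuinely fails: if $w_{ij}=w_{i'j'}=w$, a sequence $C_n\to C$ may contain triangles shrinking to $w$ (Hausdorff convergence only forces the inradius of newly appearing holes to vanish), and such a triangle can separate perturbed points $w^{(n)}_{ij}\neq w^{(n)}_{i'j'}$ that both converge to $w$, so the two labels are identified in $\Fs^{C}(\mat{w})$ but not in $\Fs^{C_n}(\mat{w}^{(n)})$. Since the discontinuity set of the circle shape function can thus carry full mass under the limiting sampling measure, the hypotheses of the extended continuous mapping theorem are violated and your final two limit passages do not go through. This is precisely where the two-level structure bites --- in the one-level setting of \cite{LoehrWinter2021} the sampled points are i.i.d.\ $\lambda_\BS$ and your null-set argument would be correct --- and it is why the paper argues quantitatively: it fixes finitely many triangles whose arcs carry non-negligible $\lambda_\BS$-mass, shows that with high probability the shape is determined by the position of the sample relative to these, and couples the two samples so that they land in corresponding components, the remaining (small) triangles of $C'$ separating the sample only with probability controlled via $M_{K'}=\lambda_\BS$. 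A secondary imprecision: your circle shape function must record separation not only by triangles of $\Delta(C)$ but also by the chords of $\|(C)$ and the bounding chords of elements of $\nabla(C)$ (two distinct circle points in the same arc of $\nabla(C)$ are the \emph{same} leaf, while points on different chords of a filled region are distinct, linearly ordered points of an arm), so ``read off from (Tri2)$'$'' understates what has to be shown stable.
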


We will use the following lemma in the proof.

\begin{lemma}\label{l:coupledsampling}
Let $(S,d)$ be separable and let $P,Q\in\CM_1(S)$, $\epsilon>0$ such that $d_{\mathrm{Pr}}(P,Q)\leq\epsilon$ where $d_{\mathrm{Pr}}$ denotes the Prokhorov distance. Then we can define two random variables $X$, resp.\ $Y$, distributed according to $P$, resp.\ $Q$, on the same probability space $(\Omega,\PP)$ such that
\begin{equation}
	\PP\{d(X,Y)\geq\epsilon\}\leq\epsilon.
\end{equation}
\end{lemma}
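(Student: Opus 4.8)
The plan is to recognize the statement as the coupling characterization of the Prokhorov metric, i.e.\ a form of Strassen's theorem. Writing $\Pi(P,Q)$ for the set of Borel probability measures on $S\times S$ with first marginal $P$ and second marginal $Q$, it suffices to produce $\mu\in\Pi(P,Q)$ with
\[
	\mu\big\{(x,y)\in S\times S:d(x,y)\ge\epsilon\big\}\le\epsilon ,
\]
since one may then take $(\Omega,\PP):=(S\times S,\mu)$ and let $X,Y$ be the two coordinate projections, which have laws $P$ and $Q$ by construction. Recall that $d_{\mathrm{Pr}}(P,Q)\le\epsilon$ means, with the infimum in the definition of the Prokhorov metric being attained, that
\[
	P(A)\le Q(A^\epsilon)+\epsilon\qquad\text{for every closed }A\subseteq S,
\]
where $A^\epsilon:=\{x\in S:d(x,A)<\epsilon\}$ is the open $\epsilon$-neighbourhood. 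The whole task is thus to pass from this family of neighbourhood inequalities to a single coupling, which is exactly the content of Strassen's theorem on a separable metric space.

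First I would reduce to the case of finitely supported measures. Using separability of $(S,d)$, I would cover all but an arbitrarily small fraction of the mass of $P$ and of $Q$ by finitely many balls of radius $<\delta$ and push the measures onto the finite sets of centres, obtaining finitely supported $P_n,Q_n$ with $d_{\mathrm{Pr}}(P,P_n)\to0$ and $d_{\mathrm{Pr}}(Q,Q_n)\to0$, whence $d_{\mathrm{Pr}}(P_n,Q_n)\le\epsilon+o(1)$ by the triangle inequality. For finitely supported marginals the existence of a coupling that places at most the prescribed mass on the ``long'' pairs $\{d\ge\epsilon\}$ is a finite transportation-feasibility problem, and the feasibility criterion supplied by the max-flow--min-cut theorem (equivalently, a deficiency version of Hall's marriage theorem) is precisely the neighbourhood inequality $P_n(A)\le Q_n(A^\epsilon)+\epsilon$ read over subsets $A$ of the support. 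This produces couplings $\mu_n\in\Pi(P_n,Q_n)$ with $\mu_n\{d\ge\epsilon\}\le\epsilon+o(1)$.

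Finally I would pass to the limit. Since the marginals converge they are tight, so the family $(\mu_n)_n$ is tight and, along a subsequence, $\mu_n$ converges weakly to some $\mu$ whose marginals are $P$ and $Q$. The main obstacle is transferring the defect bound to the limit at the non-strict threshold: because $\{d\ge\epsilon\}$ is closed while $\{d>\epsilon\}$ is open, a naive application of the Portmanteau theorem controls the wrong side of the boundary $\{d=\epsilon\}$. This is the delicate point, and it is exactly what the careful formulation of Strassen's theorem is designed to handle, by combining the attainment of the infimum in $d_{\mathrm{Pr}}$ with a semicontinuity argument for the functional $\mu\mapsto\mu\{d\ge\epsilon\}$ on the weakly compact set $\Pi(P,Q)$. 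For this reason the cleanest route is to invoke Strassen's theorem directly, in its separable-space version (e.g.\ as in Dudley's \emph{Real Analysis and Probability}), which yields a coupling $\mu$ satisfying $\mu\{d(x,y)\ge\epsilon\}\le\epsilon$ verbatim, and then to read off $X$ and $Y$ as the coordinate projections as above.
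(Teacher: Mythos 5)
Your proposal is correct and takes essentially the same route as the paper: the paper's proof is a one-line reduction to the coupling characterization of the Prokhorov metric, citing Theorem~3.1.2 of Ethier--Kurtz, which is exactly the separable-space Strassen theorem you invoke. Your additional sketch of how that theorem is proved (finite-support reduction, max-flow/min-cut, weak limit) is more detail than the paper gives, but the underlying argument is the same.
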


\begin{proof}
The lemma directly follows from \cite[Theorem~3.1.2]{EthierKurtz1986}:
\begin{equation}
	d_{\mathrm{Pr}}(P,Q)=\inf_{\gamma}\inf\big\{\epsilon>0\big|\gamma\{(x,y)|d(x,y)\geq\epsilon\}\leq\epsilon\big\},
\end{equation}
where the first infimum is taken over the set of measures $\gamma\in\CM_1(S\times S)$ with marginals $P$ and $Q$.
\end{proof}

We can now proceed with the proof of the continuity of the coding map.

\begin{proof}[Proof of Proposition~\ref{p:continuitytau}]
Fix $\Gamma=(C,K)\in\mathfrak{D}$, $m\in\N$ and $\vn\in\N^m$. By Remark~\ref{r:treeshapedistribution}, it is enough to show that $\mathfrak{S}_{m,\vn}\circ\tau:\CT\to\CM_1(\FC_{m,\vn})$ is continuous at $(C,K)$.

Let $\kappa_1,...,\kappa_m$ be independent, identically distributed measures on $\CM_1(\BS)$ with distribution $K$. For all $1\leq i\leq m$, let $U_{i1},...,U_{in_i}$ be independent, identically distributed points on $\BS$ with distribution $\kappa_i$. Fix $\epsilon>0$. Since $M_K=\lambda_\BS$, there exist $N\in\N$ and $v_1,...,v_N\in\Delta(C)\cap\nabla(C)$ distinct, such that with probability at least $1-\epsilon$ the following holds:
\begin{itemize}
	\item if $\{U_{ij}:(i,j)\in\N_{m,\vn}\}\cap\{v\}\neq\emptyset$ for some $v\in\nabla(C)$, then $v\in\{v_1,...,v_N\}$, and
	\item if $\{U_{ij}:(i,j)\in\N_{m,\vn}\}\cap\comp_v(w)\neq\emptyset$ for some $v\in\Delta(C)$ and all $w\in\Delta(C)\cup\nabla(C)\cup\square(C)$ with $w\neq v$, then $v\in\{v_1,...,v_N\}$.
\end{itemize}
Put $\epsilon':=\epsilon(12N|\vn|)^{-1}$. Then, since $M_K=\lambda_\BS$, each $U_{ij}$ is distributed according to the Lebesgue measure on $\BS$, so that
\begin{equation}\label{e:majdisttoC}
\begin{aligned}
	\PP & \big(\big\{d(U_{ij},\partial v_k)\geq\epsilon',\forall(i,j)\in\N_{m,\vn},k=1,...,N\big\}\big)\\
	&\geq 1-\sum_{(i,j)\in\N_{m,\vn}}\PP\big(\big\{d(U_{ij},\partial v_k)\leq\epsilon',\forall k=1,...,N\big\}\big)\\
	&\geq 1-|\vn|\PP\big(\big\{d(U_{11},\partial v_k)\leq\epsilon',\forall k=1,...,N\big\}\big)\\
	&\geq 1-\epsilon.
\end{aligned}
\end{equation}

Now, there is a $\delta=\delta(\epsilon)>0$ small enough such that for any $C'\in\CT$ satisfying $d_H(C,C')<\delta$ there are distinct $v'_1,...,v'_N\in\Delta(C')\cup\nabla(C')$ such that $d_H(v_k,v'_k)\leq \frac{\epsilon'}{2}$ for $k=1,...,N$. Let $K'\in\CM_1(\CM_1(\BS))$ such that $M_K=\lambda_\BS$. Suppose that $d_{\mathrm{Pr}}(K,K')\leq \epsilon'':=\min(\frac{\epsilon}{2|\vn|},\frac{\epsilon'}{2})$. By Lemma~\ref{l:coupledsampling}, there exist $\kappa'_1,...,\kappa'_m$ independent, identically distributed measures on $\CM_1(\BS)$ with distribution $K'$ coupled to $\kappa_1,...,\kappa_m$ such that for all $i=1,...,m$, $\PP\{d_{\mathrm{Pr}}(\kappa_i,\kappa'_i)\geq\epsilon''\}\leq \epsilon''$. Applying again Lemma~\ref{l:coupledsampling} for each $1\leq i\leq m$, there exist $U'_{i1},...,U'_{in_i}$ independent, identically distributed points on $\BS$ with distribution $\kappa'_i$ coupled to $U_{i1},...,U_{in_i}$ such that for all $j=1,...,n_i$,
\begin{equation}
	\PP\{d(U_{ij},U'_{ij})\leq\epsilon''\}=\PP\big\{d(U_{ij},U'_{ij})\leq\epsilon''\big|d_{\mathrm{Pr}}(\kappa_i,\kappa'_i)\leq\epsilon''\big\}\PP\{d_{\mathrm{Pr}}(\kappa_i,\kappa'_i)\leq\epsilon''\}\geq 1-2\epsilon''.
\end{equation}
Therefore,
\begin{equation}
	\PP\big(\big\{d(U_{ij},U'_{ij})\leq\epsilon'',\forall(i,j)\in\N_{m,\vn}\big\}\big)\geq 1-2|\vn|\epsilon''.
\end{equation}
Thus, since $\epsilon''\leq\frac{\epsilon'}{2}$, and using \eqref{e:majdisttoC},
\begin{equation}
	\PP\left(\left\{d(U'_{ij},\partial v_k)\geq\frac{\epsilon'}{2},\forall(i,j)\in\N_{m,\vn},k=1,...,N\right\}\right)\geq 1-2\epsilon.
\end{equation}

Let $\chi=(T,c,\nu):=\tau(C,K)$ and $(V_{ij})_{(i,j)\in\N_{m,\vn}}$ be distributed according to
\begin{equation}
	M_\nu^{m,\vn}(\cdot):=\int\nu^{\otimes m}(\du{\mu})\int\bigotimes_{i=1}^m\mu_i^{\otimes n_i}(\cdot),
\end{equation}
coupled to $(U_{ij})_{(i,j)\in\N_{m,\vn}}$ such that $V_{ij}\in\CS_v(w)$ if and only if $U_{ij}\in\comp_v(w)$. This is possible due to the properties of the coding map $\tau$ stated in Theorem~\ref{t:codingmap}. Define $\chi'$ and $(V'_{ij})_{(i,j)\in\N_{m,\vn}}$ similarly with $(C,K)$ replaced by $(C',K')$. Then
\begin{equation}
	\PP\big(\big\{\Fs_{(T,c)}(\mat{V})=\Fs_{(T',c')}(\mat{V'})\big\}\big)\geq 1-2\epsilon.
\end{equation}
Therefore,
\begin{equation}
	d_{\mathrm{Pr}}\big(\mathfrak{S}_{m,\vn}(\tau(C,K)),\mathfrak{S}_{m,\vn}(\tau(C',K'))\big)\leq2\epsilon.
\end{equation}
We proved that $\mathfrak{S}_{m,\vn}\circ\tau$ is continuous at $(C,K)$, with $m,\vn$ and $(C,K)$ arbitrary. Therefore, the coding map $\tau$ is continuous.
\end{proof}

We finish this section with our second main result.

\begin{theorem}[Equivalence of topologies and compactness]\label{t:equivalencecompactness}
The two-level sample shape topology and the two-level bpdd-Gromov-weak topology coincide on $\T_2^{(2)}$. Furthermore, $\T_2^{(2)}$ is compact and metrizable in this topology.
\end{theorem}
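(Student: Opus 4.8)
The plan is to combine the compactness of the domain $\mathfrak{D}$ of the coding map with the elementary topological fact that a continuous bijection from a compact space onto a Hausdorff space is a homeomorphism. Throughout, write $\mathcal{T}_{\mathrm{ss}}$ for the two-level sample shape topology and $\mathcal{T}_{\mathrm{GW}}$ for the two-level bpdd-Gromov-weak topology on $\T_2^{(2)}$.

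First I would establish that $\mathfrak{D}$ is compact. Since $\D$ is compact, $\CF(\D)$ is compact in the Hausdorff metric topology, and $\CT$ is a closed subset of $\CF(\D)$ (by the cited Lemma~4.2 of \cite{LoehrWinter2021}), hence compact. Moreover $\BS$ is compact, so $\CM_1(\BS)$ is compact in the weak topology, and iterating once more $\CM_1(\CM_1(\BS))$ is compact; thus the product $\CT\times\CM_1(\CM_1(\BS))$ is compact. It then remains to check that the constraint $M_K=\lambda_\BS$ defines a closed subset. This follows because the intensity map $K\mapsto M_K$ is weakly continuous: for $f\in\CC_b(\BS)$ one has $\int f\,\mathrm{d}M_K=\int K(\mathrm{d}\kappa)\int f\,\mathrm{d}\kappa$, and $\kappa\mapsto\int f\,\mathrm{d}\kappa$ is bounded and continuous, so $K_n\to K$ weakly implies $M_{K_n}\to M_K$ weakly. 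Hence $\{K:M_K=\lambda_\BS\}$ is closed, and $\mathfrak{D}$, being a closed subset of a compact space, is compact.

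Next I would deduce compactness of $\T_2^{(2)}$ in the finer topology. By Theorem~\ref{t:codingmap}(ii) the coding map $\tau\colon\mathfrak{D}\to\T_2^{(2)}$ is surjective, and by Proposition~\ref{p:continuitytau} it is continuous when $\T_2^{(2)}$ carries $\mathcal{T}_{\mathrm{ss}}$. Since the continuous image of a compact set is compact, $\big(\T_2^{(2)},\mathcal{T}_{\mathrm{ss}}\big)$ is compact. Now invoke Proposition~\ref{p:strongertopology}, which states that $\mathcal{T}_{\mathrm{ss}}$ is finer than $\mathcal{T}_{\mathrm{GW}}$; equivalently, the identity $\mathrm{id}\colon\big(\T_2^{(2)},\mathcal{T}_{\mathrm{ss}}\big)\to\big(\T_2^{(2)},\mathcal{T}_{\mathrm{GW}}\big)$ is continuous. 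Its domain is compact (just shown) and its codomain is Hausdorff, indeed metrizable by Corollary~\ref{p:separablemetrizable}. A continuous bijection from a compact space onto a Hausdorff space is a homeomorphism, so the identity is a homeomorphism and $\mathcal{T}_{\mathrm{ss}}=\mathcal{T}_{\mathrm{GW}}$. Since the two topologies coincide, $\T_2^{(2)}$ is compact in this common topology, and it is metrizable by Corollary~\ref{c:metrizability} (or equivalently by Corollary~\ref{p:separablemetrizable}), completing the proof.

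The substantive inputs—surjectivity (Theorem~\ref{t:codingmap}(ii)) and $\mathcal{T}_{\mathrm{ss}}$-continuity (Proposition~\ref{p:continuitytau}) of $\tau$, and the comparison of topologies (Proposition~\ref{p:strongertopology})—are already in hand, so the argument here is essentially soft topology. The one genuinely new verification is the compactness of $\mathfrak{D}$, and within it the closedness of the constraint $M_K=\lambda_\BS$, which is the step I expect to require the most care; everything else is the standard compact-to-Hausdorff homeomorphism argument.
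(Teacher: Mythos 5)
Your proof is correct and follows essentially the same route as the paper: compactness of $\mathfrak{D}$ (via compactness of $\CT$ and $\CM_1(\CM_1(\BS))$ and closedness of the constraint $M_K=\lambda_\BS$), surjectivity and continuity of $\tau$ to get compactness of the sample shape topology, and then the comparison with the Hausdorff bpdd-Gromov-weak topology to conclude coincidence and metrizability. Your explicit verification that $K\mapsto M_K$ is weakly continuous is a welcome detail that the paper leaves implicit.
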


\begin{proof}
We start by showing that the two-level sample shape topology on $\T_2^{(2)}$ is compact. The set $\CT$ equipped with the Hausdorff metric topology is compact (see \cite[Lemma~4.2]{LoehrWinter2021}). Moreover, the circle line $\BS$ is compact so that $\CM_1(\BS)$ is compact and $\CM_1(\CM_1(\BS))$ as well. Since the subset of two-level measures $K$ such that $M_K=\lambda_\BS$ is closed in $\CM_1(\CM_1(\BS))$, $\mathfrak{D}$ is a compact space. By Theorem~\ref{t:codingmap}, the coding map is surjective and by Proposition~\ref{p:continuitytau}, it is continuous when $\T_2^{(2)}$ is equipped with the two-level sample shape topology. Therefore, the sample shape topology is a compact topology.

Furthermore, the two-level bpdd-Gromov-weak topology is a Hausdorff topology by Proposition~\ref{p:separablemetrizable} and weaker than the two-level sample shape topology by Proposition~\ref{p:strongertopology}. Thus, both topologies coincide on $\T_2^{(2)}$, and we know from Corollary~\ref{c:metrizability} that is is metrizable.
\end{proof}

\section{Example: the Kingman algebraic two-level measure tree}
\label{S:Kingman}

One of the first stochastic models describing the genealogy of a large population is the Kingman coalescent. It is a partition-valued process introduced in 1982 \cite{Kingman1982} to model an evolutionary tree by looking at the genealogy backwards in time. There is not hierarchy in the original Kingman coalescent. Independently and with the same rate, each pair of individuals coalesce, meaning that their partitions merge. A natural extension of this model takes into account that the population consists of several species. Both at the individual and species
level, the coalescences follow the rule of the classical Kingman coalescent. Pairs of individuals belonging to the same species can merge as before, but individuals belonging to different species cannot coalesce before their species themselves have coalesced. Such process, named as the nested Kingman coalescent, has been considered by \cite{BlancasBenitezDuchampsLambertSiriJegousse2018,BlancasBenitezRogersSchweinsbergSiriJegousse2019,Dawson2018} or \cite{Meizis2019} for instance.

We introduce here the Kingman algebraic two-level measure tree, which corresponds to the nested Kingman coalescent measure tree, as defined in \cite{Meizis2019}, without branch length. For this, we rely on a sampling consistency of the nested Kingman coalescent and the compactness of $\T_2^{(2)}$. We first recall a definition of the nested Kingman coalescent for a host-parasite population indexed in $\N^2$, that is, $(i,j)$ is the $j$-th parasite in the $i$-th host.

\josue{For $I\subseteq\N$, let $\CE(I)$ be the set of equivalence relations on $I$. The equivalence classes of an equivalence relation are called \emph{blocks}. Let
\begin{equation}
	\CP_{nest}(I)\subset\CE(I)\times\CE(I)
\end{equation}
denote the set of all pairs $(\Fp_H,\Fp_P)$ of \emph{nested} equivalence relations on $I$, i.e.\ such that
\begin{itemize}
	\item if $i_1=i_2$, then $(i_1,j_1)$ and $(i_2,j_2)$ belong to the same block of $\Fp_H$, that is, all parasites of the same host belong to the same block of $\Fp_H$,
	\item if $(i_1,j_1)$ and $(i_2,j_2)$ belong to the same block of $\Fp_P$, then they belong to the same block of $\Fp_H$. In other words, each block of $\Fp_P$ is contained in a single block of $\Fp_H$. This property is the reason to call it ``nested''.
\end{itemize}
Therefore, $\Fp_H$ represents the population of hosts and $\Fp_P$ the population of parasites.

We also define the following equivalence relations on $\N^2$:
\begin{equation}
	\begin{aligned}
	P_0 & := \big\{\big((i,j),(i,j)\big)\big|(i,j)\in\N^2\big\}\\
	H_0 & :=\big\{\big((i,j),(i,k)\big)\big|i,j,k\in\N\big\},
	\end{aligned}
\end{equation}
which will be the initial states of the nested Kingman coalescent.}

\begin{definition}[Finite nested Kingman coalescent]
Let $I$ be a finite subset of $\N^2$ and $\gamma_H,\gamma_P>0$. The \emph{finite nested Kingman coalescent
\begin{equation}
	(\KK^I(t))_{t\geq 0}=\big(\kappa^I_H(t),\kappa^I_P(t)\big)_{t\geq 0}
\end{equation}
on $I$ with rates $(\gamma_H,\gamma_P)$} is a continuous-time Markov process with values in $\CP_{nest}(I)$ such that:
\begin{enumerate}
	\item The initial state is $\KK^I(0)=(H_0\cap I^2,P_0\cap I^2)$.
	\item $(\kappa^I_H(t))_{t\geq 0}$ is a Kingman coalescent with rate $\gamma_H$, i.e.\ any pair of blocks in $\kappa^I_H(t)$ merge at rate $\gamma_H$.
	\item $(\kappa^I_P(t))_{t\geq 0}$ behaves in the following way: any pair of blocks $\pi_1,\pi_2$ of $\kappa^I_P(t)$ such that $\pi_1\cup\pi_2$ is contained in a single block of $\kappa^I_H(t)$ merge at rate $\gamma_P$. Other blocks of $\kappa^I_P(t)$ cannot merge.
\end{enumerate}
\end{definition}

Roughly speaking, hosts merge as in a Kingman coalescent with rate $\gamma_H$, and parasites within the same host merge as in a Kingman coalescent with rate $\gamma_P$. Since this process has only finitely many states, it is well defined and unique. Furthermore, it satisfies an important property:

\begin{proposition}[Sampling consistency]\label{p:samplingconsis}
Let $I$ and $J$ be two finite subsets of $\N^2$ such that $J\subset I$ and let $\gamma_H,\gamma_P>0$. The restriction of $\KK^I$ to $J$ has same distribution as $\KK^J$, that is, the restriction of $\KK^I$ to $J$ is a finite nested Kingman coalescent on $J$ with rates $(\gamma_H,\gamma_P)$.
\end{proposition}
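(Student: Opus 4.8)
The plan is to show that the restricted process $\big(\kappa^I_H(t)\cap J^2,\kappa^I_P(t)\cap J^2\big)_{t\ge 0}$ is itself a continuous-time Markov chain on $\CP_{nest}(J)$ whose jump rates coincide with those in the definition of $\KK^J$; since it starts from $(H_0\cap J^2,P_0\cap J^2)$ and the generator determines the law on a finite state space, this yields equality in distribution. First I would record that each jump of $\KK^I$ merges exactly two blocks, either two host-blocks (at rate $\gamma_H$) or two parasite-blocks lying in a common host-block (at rate $\gamma_P$), and observe how such a merge acts after restriction. Writing $C=B\cap J$ for the trace on $J$ of an $I$-block $B$, a host merge of $B_a,B_b$ becomes the merge of the $J$-host-blocks $C_a,C_b$ when both are non-empty and is invisible otherwise, and likewise for parasite merges; in either case the restricted configuration again lies in $\CP_{nest}(J)$, because restriction preserves the nesting property.

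The core of the argument is to verify that the restricted process is Markov with the right rates, i.e.\ strong lumpability: the total rate at which $\KK^I$ produces a given transition of the $J$-configuration depends only on the current $J$-configuration and not on the finer $I$-structure. For the host level this is immediate, since the host-blocks of $\KK^I$ with non-empty trace on $J$ are in bijection with the $J$-host-blocks (disjoint blocks have disjoint traces), so two distinct $J$-host-blocks $C_a,C_b$ get merged precisely when the unique $I$-host-blocks $B_a,B_b$ with $C_a=B_a\cap J$ and $C_b=B_b\cap J$ merge, which happens at rate $\gamma_H$ regardless of the rest of the partition. For the parasite level one must match the nesting constraint: two distinct non-empty $J$-parasite-blocks $C'_a=B'_a\cap J$ and $C'_b=B'_b\cap J$ should merge at rate $\gamma_P$ exactly when $B'_a\cup B'_b$ sits in a single $I$-host-block, and at rate $0$ otherwise. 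The key point I would prove is the equivalence that $B'_a\cup B'_b$ lies in one $I$-host-block if and only if $C'_a\cup C'_b$ lies in one $J$-host-block. The forward implication is trivial; the reverse uses that by nestedness each parasite block sits inside a single host-block, and that two distinct $I$-host-blocks have disjoint, hence distinct, non-empty traces on $J$, so $C'_a$ and $C'_b$ can share a $J$-host-block only if the enclosing $I$-host-blocks already coincide. This shows the parasite merge rate is $\gamma_P$ or $0$ according to the $J$-configuration alone.

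With lumpability established, the restricted jump rates are exactly $\gamma_H$ per pair of host-blocks and $\gamma_P$ per admissible pair of parasite-blocks of the current $J$-configuration, which are precisely the rates defining the finite nested Kingman coalescent on $J$; together with the matching initial condition and the uniqueness of a finite-state continuous-time Markov chain with prescribed generator, this gives that the restriction of $\KK^I$ to $J$ is distributed as $\KK^J$. I expect the main obstacle to be the verification of strong lumpability, and within it the reverse direction of the nesting equivalence for the parasite level, since this is where one must rule out that a parasite merge forbidden in $\KK^I$ could appear admissible after passing to $J$; everything else reduces to the elementary bookkeeping that a single binary merge in $I$ induces at most a single binary merge in $J$ at an unchanged rate.
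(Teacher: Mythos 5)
Your argument is correct. Note that the paper itself states Proposition~\ref{p:samplingconsis} without proof (deferring to the literature on nested coalescents, e.g.\ the construction in \cite{BlancasBenitezDuchampsLambertSiriJegousse2018}), so there is no in-paper proof to compare against; your write-up supplies a complete justification. The lumpability route is the natural one: you correctly identify that the only delicate point is the reverse implication of the nesting equivalence --- that two parasite blocks whose traces share a $J$-host-block must already lie in a common $I$-host-block --- and your argument via the injectivity of $B\mapsto B\cap J$ on blocks with non-empty trace settles it. You also handle the other potential pitfall, namely that merges involving a block with empty trace on $J$ are invisible and hence contribute nothing to the lumped rates, and that each visible restricted transition is induced by exactly one transition of $\KK^I$, so no rates accumulate. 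Together with the matching initial condition and uniqueness of a finite-state chain with a given generator, this is a complete proof.
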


Using this property, we can show the existence of the nested Kingman coalescent for an infinite set of hosts and of parasites (see \cite[Section~5]{BlancasBenitezDuchampsLambertSiriJegousse2018} for a construction of more general nested coalescents).

\begin{definition}[Nested Kingman coalescent]
Let $\gamma_H,\gamma_P>0$. The \emph{nested Kingman coalescent $\KK$ with rates $(\gamma_H,\gamma_P)$} is a continuous-time Markov process with values in $\CP_{nest}(\N^2)$ such that for any finite subset $I$ of $\N^2$, the restriction of $\KK$ to $I$ is a finite nested Kingman coalescent on $I$ with rates $(\gamma_H,\gamma_P)$.
\end{definition}

\josue{In order to define the Kingman algebraic two-level measure tree, we first introduce the nested Kingman \emph{rooted} algebraic tree. For this, let us shortly explain how algebraic trees are extended to rooted algebraic trees.}

\begin{definition}[Rooted and partially ordered algebraic tree]
A \emph{partially ordered algebraic tree} is a non-empty set $T$ together with
a symmetric map $c_{\wedge}:\,T\times T\to T$ such that:
\begin{enumerate}
\item[(M1)] For all $x\in T$,
  $c_{\wedge}\big(x,x\big)=x.$
\item[(M2)] For all $x_1,x_2,x_3\in T$,
  $c_{\wedge}\big(x_1,c_{\wedge}(x_2,x_3)\big)=c_{\wedge}(c_{\wedge}(x_1,x_2),x_3).$
\item[(M3)] For all $x_1,x_2,x_3\in T$, $\#\{c_{\wedge}(x_1,x_2),c_{\wedge}(x_1,x_3),c_{\wedge}(x_2,x_3)\}\le 2$ and if $c_{\wedge}(x_1,x_2)=c_{\wedge}(x_1,x_3)$, then
\begin{equation}
\label{e:M3}
   c_{\wedge}(x_1,x_2)=c_{\wedge}\big(c_{\wedge}(x_1,x_2),c_{\wedge}(x_2,x_3)\big).
\end{equation}
\end{enumerate}
We refer to $c_{\wedge}$ as the {\em minimum map}.

A {\em rooted algebraic tree} $(T,c_\wedge)$ is a partially ordered algebraic tree for which there exists a point $\rho\in T$ with
$c_\wedge(\rho,x)=\rho$ for all $x\in T$. We will refer to (this unique) $\rho$ as the {\em root} of $(T,c_\wedge)$.
\label{d:rootedtrees}
\end{definition}

\begin{remark}[(M1) and (M2) define a partial order] Let $(T,c_{\wedge})$ be a partially ordered algebraic tree. In what follows, we write for $x,y\in T$, $x\le y$ if and only if $x=c_{\wedge}(x,y)$ respectively, $x< y$ if and only if $x=c_{\wedge}(x,y)\not =y$. Notice that the first two conditions (M1) and (M2) ensure that $\le$ defines a partial order relation. Indeed, reflexivity follows from (M1), antisymmetry follows from the fact that $c_{\wedge}$ is a symmetric map and transitivity follows from (M2), i.e., if $x,y,z\in T$ are such that $x\le y$ and $y\le z$, or equivalently, if $x=c_{\wedge}(x,y)$ and $y=c_{\wedge}(y,z)$, then $x=c_{\wedge}(x,y)=c_{\wedge}(x,c_{\wedge}(y,z))=c_{\wedge}(c_{\wedge}(x,y),z)=c_{\wedge}(x,z)$.
\label{Rem:001}
\end{remark}

When we add the third condition (M3) to the partially ordered set defined by (M1) and (M2), we ensure that there are no loops, so that a rooted algebraic tree defines an (unrooted) algebraic tree. Reciprocally, by distinguishing a point in an algebraic tree, we can define a rooted algebraic tree (see \cite[Section~2.2]{NussbaumerWinter2020} for more details).

\begin{proposition}[Rooted versus unrooted algebraic trees] Let $T\not=\emptyset$.
\begin{itemize}
	\item[(i)] If $(T,c_\wedge)$ is a partially ordered, algebraic tree and $c:T^3\to T$ the symmetric map defined as
	\begin{equation}
	\begin{aligned}
	   c(x,y,z)
	   &:=
	   \max\{c_\wedge(x,y),c_\wedge(x,z),c_\wedge(y,z)\},\hspace{.5cm}x,y,z\in T,
	\end{aligned}
	\end{equation}
	then $(T,c)$ is an algebraic tree.
	\item[(ii)] If $(T,c)$ is an algebraic tree, $\rho\in T$, and $c_\wedge:T^2\to T$ the symmetric map defined as
	\begin{equation}
	   c_\wedge(x,y):=c(x,y,\rho),\hspace{.5cm}x,y\in T,
	\end{equation}
	then $(T,c_\wedge,\rho)$ is a rooted algebraic tree.
\end{itemize}
\label{p:rootedunrooted}
\end{proposition}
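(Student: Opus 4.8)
The plan is to verify the defining axioms of each structure directly, the crucial preliminary step being to check that the expression $\max\{c_\wedge(x,y),c_\wedge(x,z),c_\wedge(y,z)\}$ in (i) is well defined. Throughout I would work with the partial order $\le$ from Remark~\ref{Rem:001}, for which (M1) and (M2) already force $c_\wedge(x,y)$ to be a lower bound of $\{x,y\}$; a short computation with (M2) (namely $c_\wedge(w,c_\wedge(x,y))=c_\wedge(c_\wedge(w,x),y)$ when $w\le x,y$) shows it is in fact the greatest lower bound, so $(T,\le)$ is a meet-semilattice with $c_\wedge=\wedge$. The well-definedness of the maximum is exactly where (M3) enters: given three points, (M3) says at most two of the pairwise meets are distinct, and whenever two coincide—say $c_\wedge(x_1,x_2)=c_\wedge(x_1,x_3)$—the identity \eqref{e:M3} rewrites as $c_\wedge(x_1,x_2)\le c_\wedge(x_2,x_3)$. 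Permuting the roles of $x_1,x_2,x_3$ (legitimate since (M3) holds for every triple and $c_\wedge$ is symmetric) shows that the repeated meet always lies below the remaining one, so the three pairwise meets admit a unique $\le$-largest element. This makes $c$ in (i) both well defined and symmetric.

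For part (i) the axioms 2pc and 3pc are then short order computations. For 2pc, $c(x_1,x_2,x_2)=\max\{c_\wedge(x_1,x_2),c_\wedge(x_2,x_2)\}$, and since $c_\wedge(x_1,x_2)\le x_2=c_\wedge(x_2,x_2)$ by (M1), this maximum is $x_2$. For 3pc, writing $w:=c(x_1,x_2,x_3)$ one observes that $w$ is one of the three pairwise meets, say $w=c_\wedge(x_1,x_3)\le x_1$; then (M1)--(M2) give $c_\wedge(x_1,w)=w$, so $w$ reappears as a pairwise meet of $\{x_1,x_2,w\}$ and, being $\ge c_\wedge(x_1,x_2)$, equals $c(x_1,x_2,w)$. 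The substantive point is 4pc: for four points I would set $v:=c(x_1,x_2,x_3)$, use the previous paragraph to assume without loss of generality $v=c_\wedge(x_2,x_3)$ with $c_\wedge(x_1,x_2)=c_\wedge(x_1,x_3)\le v$, and then compare $v$ against $c(x_1,x_2,x_4)$, $c(x_1,x_3,x_4)$, $c(x_2,x_3,x_4)$ through a case analysis on the position of each $c_\wedge(x_i,x_4)$ relative to $v$ in the semilattice, repeatedly invoking (M3) to collapse pairwise meets. The tree condition (M3) is precisely what forbids the ``loop'' configurations in which $v$ would fail to appear among the three options.

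For part (ii) the route is dual. I would first show that $c_\wedge(x,y):=c(x,y,\rho)$ is the meet of $x,y$ for the root-induced partial order $\le_\rho$ of \eqref{e:partialorder}, since $c(x,y,\rho)$ is the point at which the intervals $[\rho,x]$ and $[\rho,y]$ separate. Symmetry of $c$ gives symmetry of $c_\wedge$; axiom (M1) and the root property $c_\wedge(\rho,x)=\rho$ are immediate from 2pc (applied as $c(x,x,\rho)=x$ and $c(\rho,\rho,x)=\rho$). Then (M2) is exactly associativity of the meet in the semilattice $(T,\le_\rho)$, and (M3)—both the ``at most two distinct values'' clause and the identity \eqref{e:M3}—is the translation of 4pc together with the tree (no-loop) structure of algebraic trees.

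The main obstacle in both directions is the same tree/loop phenomenon encoded by the pairing of (M3) with 4pc: establishing 4pc for the constructed $c$ in (i), and conversely establishing (M3) for the constructed $c_\wedge$ in (ii), each requires a careful case distinction in the semilattice rather than a one-line identity, whereas all remaining axioms reduce to the idempotency and associativity computations above. Since these case analyses are routine but lengthy, I would state the semilattice lemmas in full, carry out the representative cases of 4pc and (M3), and otherwise refer to \cite[Section~2.2]{NussbaumerWinter2020} for the parallel verification, checking only that our normalization of the maximum and of the root matches.
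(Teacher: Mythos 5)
The paper does not actually prove Proposition~\ref{p:rootedunrooted}: it only states it and points to \cite[Section~2.2]{NussbaumerWinter2020} in the preceding paragraph, so there is no in-paper argument to compare against. Your plan is sound and, if anything, more informative than the paper's treatment: the semilattice observations are correct ($c_\wedge(x,y)$ is the $\le$-greatest lower bound by (M1)--(M2), and (M3) is exactly what makes $\max\{c_\wedge(x,y),c_\wedge(x,z),c_\wedge(y,z)\}$ well defined, since any two of the three pairwise meets share an index and the repeated value lies below the third), and your verifications of (2pc), (3pc), (M1), (M2) and the root property are fine. The only substantive content you leave unexecuted is the case analysis for (4pc) in (i) and for (M3) in (ii), which you explicitly defer to the same reference the paper cites; since that is where all the real work of the proposition lives, a self-contained write-up would need those cases spelled out, but as a comparison with this paper your proposal matches (and exceeds) what is on the page.
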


We now construct the Kingman a2m tree with rates $(\gamma_H,\gamma_P)$. Given $\KK=(\kappa_H,\kappa_P)$, we define the {\em nested Kingman rooted algebraic tree} as the random rooted algebraic tree $(T,c_\wedge,\rho)$ with the vertex set
\begin{equation}
   T:=\{\varpi_\rho\}\uplus\bigcup_{t\ge 0}\bigcup_{\varpi\in \kappa_P(t)}\varpi,
\end{equation}
with $\varpi_\rho=\N^2\uplus\{\rho\}$ for a point $\rho\not\in\N^2$ and the minimal map $c_\rho$ which sends two elements $\varpi,\varpi'\in T$ to the smallest $\tilde{\varpi}\in T$ which contains both $\varpi$ and $\varpi'$, i.e.,
\begin{equation}
   c_\wedge(\varpi,\varpi')
 :=
   \bigcap_{\tilde{\varpi}\in T:\,\varpi,\varpi'\subseteq\tilde{\varpi}}\widetilde{\varpi}.
\end{equation}
Further, we define the {\em nested Kingman algebraic tree} as the random algebraic space $(T,c)$ obtained from the rooted nested Kingman algebraic tree $(T,c_\wedge,\varpi_\rho)$ as in (\ref{p:rootedunrooted}).

For $M\in\N$ and $\vN\in\N^M$, define also the two-level measure
\begin{equation}
   \nu^{M,\vN}:=\frac{1}{M}\sum_{i=1}^M\delta_{\frac{1}{N_i}\sum_{j=1}^{N_i}\delta_{\{(i,j)\}}},
\end{equation}
so that $(T,c,\nu^{M,\vN})$ is a random binary a2m tree in $\T^{(2)}_2$. We define the Kingman a2m tree as the weak limit (with respect to the two-level sample shape convergence) of these random trees as $M,\vN\to\infty$, that is, $M$ and $\inf_{i\in M}N_i$ simultaneously go to infinity.

\begin{proposition}[Kingman algebraic two-level measure tree] For $M\in\N$ and $\vN\in\N^M$, let $\chi^{M,\vN}=(T,c,\nu^{M,\vN})$. Then their exists a random binary a2m tree $\chi\in\T_2^{(2)}$ such that
\begin{equation}
   \chi^{M,\vN}{_{\displaystyle\Longrightarrow\atop M,\vN\to\infty}} \chi,
\end{equation}
where $\Rightarrow$ stands for weak convergence on $\mathbb{T}_2^{(2)}$ equipped with the two-level sample shape convergence.
\end{proposition}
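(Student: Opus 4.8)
The plan is to combine the compactness of $\T_2^{(2)}$ (Theorem~\ref{t:equivalencecompactness}) with the sampling consistency of the nested Kingman coalescent (Proposition~\ref{p:samplingconsis}) in the standard way one proves convergence on a compact space: first obtain subsequential limits for free, and then identify them by showing that the expectations of a rich enough class of continuous test functions converge along the whole sequence.

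First I would note that, since $\T_2^{(2)}$ is compact and metrizable in the two-level sample shape topology, the space $\CM_1(\T_2^{(2)})$ of laws is itself compact for the weak topology. Hence the family of distributions of the $\chi^{M,\vN}$ is automatically tight, and every sequence along which $M\to\infty$ and $\inf_i N_i\to\infty$ has a weakly convergent subsequence; it therefore suffices to prove that all subsequential limits coincide. As test functions I would take the two-level shape polynomials $\Phi^{m,\vn,\varphi}\in\Pi_\Fs^{(2)}$. They are continuous on $\T_2^{(2)}$ by construction, they contain the constants (take $\varphi\equiv 1$), and they separate points because the topology they induce is Hausdorff. Moreover a product $\Phi^{m_1,\vn_1,\varphi_1}\cdot\Phi^{m_2,\vn_2,\varphi_2}$ is again a shape polynomial: rewriting the product by Fubini as a single integral over $m_1+m_2$ sampled measures and $|\vn_1|+|\vn_2|$ sampled points, the integrand is $\psi(\Fs_{(T,c)}(\mat{u}))$, where $\psi$ reads off from the shape of the full sample the shapes of the two sub-samples and multiplies $\varphi_1$ and $\varphi_2$ evaluated on them. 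Thus $\Pi_\Fs^{(2)}$ is a point-separating subalgebra of $\CC(\T_2^{(2)})$ containing constants, so by Stone--Weierstrass it is dense in $\CC(\T_2^{(2)})$. Because the measures in play are all probability measures, a routine approximation argument then shows it is enough to prove that $\E[\Phi^{m,\vn,\varphi}(\chi^{M,\vN})]$ converges as $M,\vN\to\infty$ for each fixed $m$, $\vn$, $\varphi$.

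The heart of the argument is the computation of this expectation. By definition of $\nu^{M,\vN}$, evaluating $\Phi^{m,\vn,\varphi}$ at $\chi^{M,\vN}$ amounts to a two-stage sample: draw $m$ hosts uniformly with replacement from $\{1,\dots,M\}$, then inside the $k$-th chosen host draw $n_k$ parasites uniformly with replacement, and finally evaluate $\varphi$ on the shape $\Fs_{(T,c)}$ spanned by these $|\vn|$ individuals. Taking the expectation over the random nested Kingman tree, I would split according to whether the sampled individuals are pairwise distinct. Since $m$ and $\vn$ are fixed while $M\to\infty$ and $\inf_i N_i\to\infty$, the probability of any coincidence among the sampled hosts, or among the sampled parasites within a host, tends to $0$; as $\varphi$ is bounded, the contribution of the coincidence event is negligible. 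On the complementary event the sample is a configuration of $m$ distinct hosts carrying $n_1,\dots,n_m$ distinct parasites, and by sampling consistency (Proposition~\ref{p:samplingconsis}) the genealogy of that finite configuration is a finite nested Kingman coalescent on it, while the exchangeability of the nested Kingman coalescent (its law is invariant under permutations of hosts and of parasites within each host) makes the law of the induced cladogram in $\FC_{m,\vn}$ independent of $M$ and $\vN$ and of the precise indices drawn. Consequently $\E[\Phi^{m,\vn,\varphi}(\chi^{M,\vN})]$ converges to the integral of $\varphi$ against the limiting tree shape distribution $\mathfrak{S}_{m,\vn}$ of that configuration.

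Putting these together, every subsequential weak limit $\chi$ assigns the same value $\lim_{M,\vN\to\infty}\E[\Phi^{m,\vn,\varphi}(\chi^{M,\vN})]$ to each shape polynomial; since $\Pi_\Fs^{(2)}$ is dense in $\CC(\T_2^{(2)})$ and the limits are probability measures, all subsequential limits coincide, so the full sequence converges weakly to a random $\chi\in\T_2^{(2)}$. The main obstacle is the third step: one must make precise that, on the event of a distinct two-stage sample, the law of the induced cladogram shape is genuinely independent of $M$ and $\vN$ — this is exactly where sampling consistency and the exchangeability of the nested Kingman coalescent are essential — and one must control the vanishing coincidence probabilities uniformly enough to pass to the limit. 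A secondary technical point is to check carefully that products of two-level shape polynomials stay in $\Pi_\Fs^{(2)}$, which is what allows the Stone--Weierstrass reduction.
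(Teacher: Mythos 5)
Your proposal is correct and follows essentially the same route as the paper: tightness from the compactness of $\T_2^{(2)}$ in the two-level sample shape topology, and identification of the limit via the convergence of the $(m,\vn)$-shape distributions, using sampling consistency and exchangeability of the nested Kingman coalescent together with the asymptotic equivalence of sampling with and without repetition as $M,\vN\to\infty$. Your Stone--Weierstrass argument showing that $\Pi_\Fs^{(2)}$ is a convergence-determining algebra is a detail the paper leaves implicit (it simply invokes the definition of two-level sample shape convergence), and making it explicit is a genuine, if minor, improvement.
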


\begin{proof} By Proposition~\ref{t:equivalencecompactness}, the space $\mathbb{T}_2^{(2)}$ equipped with the two-level sample shape convergence is compact. Therefore the sequence $\{\chi^{M,\vN}:\,M\in\N,\vN\in\N^M\}$ is clearly tight.

The {\em uniqueness} of the limit follows from the sampling consistency of the family of finite nested Kingman coalescents. Fix $M\in\N$ and $\vN\in\N^M$, and $m\in\N$, $\vn\in\N^M$ such that $m\leq M$ and for each $1\leq i\leq m$, $n_i\leq N_i$. Consider $\{U_{ij}:(i,j)\in\N_{m,\vn}\}$ sampled (without repetition) at random from $\chi^{M,\vN}$. By Proposition~\ref{p:samplingconsis}, the restriction of $\KK^{\N_{M,\vN}}$ to $\{U_{ij}:(i,j)\in\N_{m,\vn}\}$ is a finite nested Kingman coalescent on $\{U_{ij}:(i,j)\in\N_{m,\vn}\}$. Therefore, the shape $\Fs_{(T,c)}(\mat{U})\in\FC_{m,\vn}$ of the subtree spanned by the leaves $\{U_{ij}:(i,j)\in\N_{m,\vn}\}$ has the distribution of the shape of $\chi^{m,\vn}$. Since sampling with and without repetition is asymptotically equivalent when $M,\vN\to\infty$, the claim follows by definition of the two-level sample shape convergence.
\end{proof}

We can now conclude the paper with the construction of the Kingman algebraic two-level measure tree, which is obtained as the limit of finite nested Kingman coalescent trees and which is the algebraic tree corresponding to the two-level Kingman coalescent introduced in Meizis \cite{Meizis2019}.

\begin{definition}[Kingman algebraic two-level measure tree]
The \emph{Kingman algebraic two-level measure tree $\chi$ with rates $(\gamma_H,\gamma_P)$} is the unique limit in $\T^{(2)}_2$ of the sequence $(\chi^{M,\vN})_{M,\vN}$, where $\chi^{M,\vN}$ is the random algebraic two-level measure tree obtained from the finite nested Kingman coalescent on $\N_{M,\vN}$ with rates $(\gamma_H,\gamma_P)$.
\end{definition}

\appendix

\josue{\textsc{Acknowledgements.} This work was done as part of the PhD thesis of Josué Nussbaumer, which was a joint program between Université de Lille and Universität Duisburg-Essen and supported by Université franco-allemande (UFA). J.N. and A.W. would like to thank the DFG through the SPP Priority Programme 1590. J.N. and V.C.T. were supported by Labex B\'ezout (ANR-10-LABX-58) and ``Chaire Mod\'elisation Math\'ematique et Biodiversit\'e (MMB)'' of Veolia Environnement-Ecole Polytechnique-Museum National d'Histoire Naturelle-Fondation X.}

{\footnotesize

}


\begin{thebibliography}{BRSSJ19}

\bibitem[Ald94a]{Aldous1994a}
David Aldous.
\newblock Recursive self-similarity for random trees, random triangulations and
  {B}rownian excursion.
\newblock {\em Ann. Probab.}, 22(2):527--545, 1994.

\bibitem[Ald94b]{Aldous1994b}
David Aldous.
\newblock Triangulating the circle, at random.
\newblock {\em The American Mathematical Monthly}, 101(3):223--233, 1994.

\bibitem[Ald00]{Aldous2000}
David Aldous.
\newblock Mixing time for a {Markov} chain on cladograms.
\newblock {\em Combinatorics, Probability and Computing}, 9:191--204, 2000.

\bibitem[ALW17]{AthreyaLoehrWinter2017}
Siva Athreya, Wolfgang L{\"o}hr, and Anita Winter.
\newblock Invariance principle for variable speed random walks on trees.
\newblock {\em Ann.\ Probab.}, 45(2):625--667, 2017.

\bibitem[Ban08]{Bansaye2008}
V.~Bansaye.
\newblock Proliferating parasites in dividing cells : Kimmel's branching model
  revisited.
\newblock {\em Annals of Applied Probability}, 2008.

\bibitem[BDLS18]{BlancasBenitezDuchampsLambertSiriJegousse2018}
{Airam Blancas} Ben{\'i}tez, Jean-Jil Duchamps, Amaury Lambert, and Arno
  {Siri-J\'egousse}.
\newblock {Trees within Trees: Simple Nested Coalescents}.
\newblock {\em Electron. J. Probab.}, 23:1--27, 2018.

\bibitem[BJ10]{BlumJakobsson2010}
M.G.B. Blum and M.~Jakobsson.
\newblock {Deep Divergences of Human Gene Trees and Models of Human Origins}.
\newblock {\em Molecular Biology and Evolution}, 28(2):889--898, 10 2010.

\bibitem[BRSSJ19]{BlancasBenitezRogersSchweinsbergSiriJegousse2019}
{Airam Blancas} Ben{\'i}tez, Tim Rogers, Jason Schweinsberg, and Arno
  Siri-J{\'e}gousse.
\newblock {The Nested Kingman Coalescent: Speed of Coming Down from Infinity}.
\newblock {\em Annals of Applied Probability}, 29(3):1808–1836, February
  2019.
\newblock 24 pages.

\bibitem[BT11]{BansayeTran2011}
V.~Bansaye and V.C. Tran.
\newblock Branching {F}eller diffusion for cell division with parasite
  infection.
\newblock {\em ALEA}, 8:95--127, 2011.

\bibitem[Chi01]{Chiswell2001}
I.~Chiswell.
\newblock {\em Introduction to {$\Lambda$}-trees}.
\newblock World Scientific, 2001.

\bibitem[Daw18]{Dawson2018}
D.~Dawson.
\newblock Multilevel mutation-selection systems and set-valued duals.
\newblock {\em J. Math. Biol.}, 76:295--378, 2018.

\bibitem[DGW04]{DawsonGorostizaWakolbinger2004}
Donald Dawson, Luis Gorostiza, and Anton Wakolbinger.
\newblock {Hierarchical Equilibria of Branching Populations}.
\newblock {\em Electronic Journal of Probability}, 9(none):316 -- 381, 2004.

\bibitem[DHV95]{DawsonHochbergVinogradov1995}
Donald~A. Dawson, Kenneth~J. Hochberg, and Vladimir Vinogradov.
\newblock On path properties of super-2 processes ii.
\newblock In M.G. Cranston and M.A. Pinsky, editors, {\em Proceedings of
  Symposia in Pure Mathematics Series}, volume~57, pages 385--403. AMS,
  Providence, 1995.

\bibitem[DHV96]{DawsonHochbergVinogradov1996}
Donald~A. Dawson, Kenneth~J. Hochberg, and Vladimir Vinogradov.
\newblock {High-density limits of hierarchically structured branching-diffusing
  populations}.
\newblock {\em Stochastic Processes and their Applications}, 62(2):191--222,
  July 1996.

\bibitem[DHW90]{DawsonHochbergWu1990}
D.A. Dawson, K.J. Hochberg, and Y.~Wu.
\newblock {\em White Noise Analysis: Mathematics and Applications}, chapter
  Multilevel branching systems.
\newblock World Scientific Publ., 1990.

\bibitem[DMT96]{DressMoultonTerhalle1996}
Andreas Dress, Vincent Moulton, and Werner Terhalle.
\newblock T-theory: An overview.
\newblock {\em European Journal of Combinatorics}, 17(2):161--175, 1996.

\bibitem[Dre84]{Dress1984}
Andreas~W.M. Dress.
\newblock Trees, tight extensions of metric spaces, and the cohomological
  dimension of certain groups: A note on combinatorical properties of metric
  spaces.
\newblock {\em Adv. Math.}, 53:321--402, 1984.

\bibitem[DT96]{DressTerhalle1996}
A.W.M. Dress and W.F. Terhalle.
\newblock The real tree.
\newblock {\em Advances in Mathematics}, 120(2):283--301, 1996.

\bibitem[EK86]{EthierKurtz1986}
Stewart~N. Ethier and Thomas~G. Kurtz.
\newblock {\em Markov {P}rocesses: {C}haracterization and {C}onvergence}.
\newblock John Wiley, New York, 1986.

\bibitem[Eva08]{Evans2008}
Steven~N. Evans.
\newblock {\em Probability and real trees}, volume 1920 of {\em Lecture Notes
  in Mathematics}.
\newblock Springer, Berlin, 2008.
\newblock Lectures from the 35th Summer School on Probability Theory held in
  Saint-Flour, July 6--23, 2005.

\bibitem[For]{Ford2005}
Daniel~J. Ford.
\newblock Probabilities on cladograms: introduction to the alpha model.
\newblock arXiv:math/0511246.

\bibitem[GH00]{GrevenHochberg2000}
Andreas Greven and Kenneth Hochberg.
\newblock New behavioral patterns for two-level branching systems.
\newblock {\em Canadian Mathematical Society Conference Proceedings},
  6:205--215, 01 2000.

\bibitem[GHW95]{GorostizaHochbergWakolbinger1995}
Luis~G. Gorostiza, Kenneth~J. Hochberg, and Anton Wakolbinger.
\newblock Persistence of a critical super-2 process.
\newblock {\em Journal of Applied Probability}, 32(2):534--540, 1995.

\bibitem[GPW09]{GrevenPfaffelhuberWinter2009}
Andreas Greven, Peter Pfaffelhuber, and Anita Winter.
\newblock Convergence in distribution of random metric measure spaces
  ({$\Lambda$}-coalescent measure trees).
\newblock {\em Probab.\ Theo.\ Rel.\ Fields}, 145:285--322, 2009.

\bibitem[Gro99]{Gromov1999}
Misha Gromov.
\newblock {\em Metric structures for {R}iemannian and non-{R}iemannian spaces},
  volume 152 of {\em Progress in Mathematics}.
\newblock Birkh\"auser Boston Inc., Boston, MA, 1999.

\bibitem[JBA19]{JayBoitardAusterlitz2019}
F.~Jay, S.~Boitard, and F.~Austerlitz.
\newblock An {A}{B}{C} method for whole-genome sequence data: inferring
  {P}aleolithic and {N}eolithic human expansions.
\newblock {\em Molecular Biology and Evolution}, 36:1565--1579, 2019.

\bibitem[Kim97]{Kimmel1997}
M.~Kimmel.
\newblock Quasistationarity in a branching model of division-within-division.
\newblock In {\em Classical and modern branching processes (Minneapolis, MN,
  1994)}, volume~84 of {\em IMA Vol. Math. Appl.}, pages 157--164, New-York,
  1997. Springer.

\bibitem[Kin82]{Kingman1982}
J.~F.~C. Kingman.
\newblock The coalescent.
\newblock {\em Stochastic Process. Appl.}, 13(3):235--248, 1982.

\bibitem[LBP+21]{LepersBilliardPorteMeleardTran2021}
C.~Lepers, S.~Billiard, M.~Porte, S.~M\'el\'eard, and V.C. Tran.
\newblock Inference with selection, varying population size and evolving
  population structure: Application of abc to a forward-backward.
\newblock {\em Heredity}, 126:335--350, 2021.

\bibitem[LMW20]{LoehrMytnikWinter2020}
Wolfgang L{\"o}hr, Leonid Mytnik, and Anita Winter.
\newblock The {A}ldous chain on cladograms in the diffusion limit.
\newblock {\em The Annals of Probability}, 48:2565--2590, 09 2020.

\bibitem[L{\"o}h13]{Loehr2013}
Wolfgang L{\"o}hr.
\newblock {Equivalence of Gromov-Prohorov- and Gromov's
  $\underline\Box_\lambda$-metric on the space of metric measure spaces}.
\newblock {\em Electronic Communications in Probability}, 18(none):1 -- 10,
  2013.

\bibitem[LW21]{LoehrWinter2021}
Wolgang L{\"o}hr and Anita Winter.
\newblock Spaces of algebraic measure trees and triangulations of the circle.
\newblock {\em Bulletin de la Société Mathématique de France}, 149(1):1--63,
  2021.

\bibitem[Mei19]{Meizis2019}
Roland Meizis.
\newblock Convergence of metric two-level measure spaces.
\newblock {\em Stochastic Processes and their Applications}, 130, 10 2019.

\bibitem[MNO92]{MayerNikielOversteegen1992}
John~C. Mayer, Jacek Nikiel, and Lex~G. Oversteegen.
\newblock Universal spaces for {R}-trees.
\newblock {\em Transactions of the American Mathematical Society},
  334(1):411--432, 1992.

\bibitem[MO90]{MayerOversteegen1990}
John Mayer and Lex Oversteegen.
\newblock A topological characterization of {R}-trees.
\newblock {\em Trans. Amer. Math. Soc.}, 320:395--415, 07 1990.

\bibitem[MR13]{MeleardRoelly2013}
Sylvie Méléard and Sylvie R\oe{}lly.
\newblock Evolutive two-level population process and large population
  approximations.
\newblock {\em Ann.\ Univ.\ Buchar.\ Math.\ Ser.}, 4(LXII)(1):37--70, 2013.

\bibitem[Nus]{Nussbaumer2021}
Josu\'e Nussbaumer.
\newblock Resampling dynamics on metric two-level measure trees.
\newblock Manuscript in prep.

\bibitem[NW]{NussbaumerWinter2020}
Josu\'e Nussbaumer and Anita Winter.
\newblock The algebraic alpha-ford tree under evolution.
\newblock arXiv:2006.09316.

\bibitem[Ter97]{Terhalle1997}
W.F. Terhalle.
\newblock R-trees and symmetric differences of sets.
\newblock {\em Europ. J. Combinatorics}, 18:825--833, 1997.

\bibitem[Tit77]{Tits1977}
Jacques Tits.
\newblock {A ``theorem of Lie-Kolchin'' for trees}.
\newblock {\em Contributions to Algebra: A Collection of Papers Dedicated to
  Ellis Kolchin}, 12 1977.

\bibitem[VAE+09]{Verduetal2009}
P.~Verdu, F.~Austerlitz, A.~Estoup, R.~Vitalis, M.~Georges, S.~Th\'ery,
  A.~Froment, S.~Le~Bomin, A.~Gessain, J.-M. Hombert, L.~Van~der Veen,
  L.~Quintana-Murci, S.~Bahuchet, and E.~Heyer.
\newblock Origins and genetic diversity of {P}ygmy hunter-gatherers from
  {W}estern {C}entral {A}frica.
\newblock {\em Current Biology}, 19(4):312--318, 2009.

\bibitem[Wu91]{Wu1991}
Yadong Wu.
\newblock {\em Dynamic Particle Systems and Multilevel Measure Branching
  Processes}.
\newblock PhD thesis, Carleton University, Ottawa, Canada, 1991.

\bibitem[Wu94]{Wu1994}
Y.~Wu.
\newblock Asymptotic behavior of the two-level measure branching process.
\newblock {\em Annals of Probab.}, 22(2):854--874, 1994.

\end{thebibliography}
\end{document}